\theoremstyle{plain}
\newtheorem{thm}{Theorem}[section]
\newtheorem{lem}[thm]{Lemma}
\newtheorem{cor}[thm]{Corollary}
\newtheorem{defn-lem}[thm]{Definition-Lemma}
\newtheorem{prop}[thm]{Proposition}
\theoremstyle{definition}
\newtheorem{defn}[thm]{Definition}
\newtheorem{ex}[thm]{Example}
\newtheorem{rem}[thm]{Remark}
\def\ses#1#2#3{ $$
      \begin{CD}
      0 @>>> #1 @>>>#2 @>>> #3 @>>> 0
      \end{CD}
       $$  }
\def\mc {\mathcal}
\def\mb {\mathbb}
\def\Ind{\operatorname{Ind}}
\def\dim{\operatorname{dim}}
\def\rank{\operatorname{rank}}
\def\Span{\operatorname{span}}
\def\maxi{\operatorname{max}}
\def\Tr{\operatorname{Tr}}
\begin{document}

\title {Homotopy classification of projections \\
       in the corona algebra of a non-simple $C\sp *$-algebra }

\author {Lawrence G. Brown \\
          Department of Mathematics\\
          Purdue University\\
          West Lafayette, USA 47907
         \and
          Hyun Ho \quad Lee \thanks{Corresponding author. \emph{E-mail address}:hyun.lee7425@gmail.com}\\
         Department of mathematical science\\
          Seoul National University\\
         Seoul, Korea 151-747
         }

\date{May 15, 2010}
\maketitle
\begin{abstract}
We study projections in the corona algebra of $C(X)\otimes K$ where $X=[0,1],[0,\infty),(-\infty,\infty)$, and $[0,1]/\{ 0,1 \}$. Using BDF's essential codimension, we determine conditions for a projection in the corona algebra to be liftable to a projection in the multiplier algebra. Then we also characterize the conditions for two projections to be equal in $K_0$-group, Murray-von Neumann equivalent, unitarily equivalent, and homotopic from the weakest to the strongest.  In light of these characterizations, we construct examples showing that
any two equivalence notions do not coincide, which serve as examples of non-stable K-theory of $C\sp*$-algebras.
\end{abstract}
\section{Introduction}\label{Intro}

Our goal of the paper is to study projections in the corona
algebra of a non-simple stable rank one $C\sp{*}$-algebra. The work
presented here was originated from a lifting problem: Let $A$ be a
$C\sp{*}$-algebra and $I$ be a closed ideal of $A$. We are interested whether every unitary
in $A/I$ is liftable to a partial isometry in $A$.  It happens whenever
$I$ has an approximate identity of projections, a weaker condition
than real rank zero. We are concerned with the case $I$ has stable
rank one. It might not be possible in general, but constructing an
explicit counter-example is still not so trivial. One way to do is
to find a stably projectionless stable rank one algebra $I$ such
that $K_0(I)$ is non-trivial. Then stabilize $I$ and consider an
extension of $I$ by $C(\mathbb{T})$ which comes from  a unitary
$\mathbf{u}$ in the corona algebra of $I$ with non-trivial $K_1$-class. i.e.,
\ses{I}{A}{C(\mb{T})}
We can observe that $\mathbf{u}$ can't be lifted to a unitary (if so,
$[\mathbf{u}]=0$ which is contradiction) and it can't be lifted to a
partial isometry either, because there aren't any non-zero
projections available to  be the defect projections of a partial
isometry.

If we denote the $C\sp{*}$-algebra of compact
operators on a separable infinite dimensional Hilbert space by $K$, the cone or suspension of $K$ are stably projectionless but their $K_0$-groups are trivial. Let $D$ be the cone or suspension of $K$ and suppose that we have a projection
$\mathbf{p}$ in the corona algebra of $D$ denoted by $\mc{C}(D)$, which does not lift (stably) but its
$K_0$-class does lift. If we let $a$
be a self adjoint element which lifts $\mathbf{p}$ in the multiplier algebra $M(D)$, we
take $I$ to be the $C\sp{*}$-algebra generated by $a$ and $D$ so
that the quotient $I/D$ is isomorphic to $\mathbb{C}$. Then the
Busby invariant is determined by sending $1$ to $\mathbf{p}$, and we
have the following commutative diagram:
\begin{equation*}
\begin{CD}
   0 @>>> D @>j>> I @>>> \mathbb{C} @>>> 0 \\
    @. @| @VVV @VV\mathbf{p}V \\
   0 @>>> D @>>>M(D) @>>> \mc{C}(D) @>>> 0
\end{CD}
\end{equation*}
  By the long exact sequence, we have
  \begin{equation*}
 \begin{CD}
  K_0(D) @>>>K_0(I) @>>> K_0(\mathbb{C}) @>\partial_0>> K_1(D)  \\
      @. @VVV   @VV\mathbf{p}V @| \\
  K_0(D) @>>>K_0(M(D)) @>>> K_0(\mc{C}(D)) @>\partial_0>> K_1(D)
\end{CD}
\end{equation*}
Since $\partial_0([\mathbf{p}]_0)=0$, $\partial_0:K_0(\mathbb{C})
\to K_1(D)$ becomes trivial. Thus $K_0(I) \cong K_0(\mathbb{C})$. In
particular, $K_0(I)$ is non-trivial. Consequently, we found a
(stably) projectionless stable rank one $C\sp{*}$-algebra such that
 its $K_0$-group is non-trivial. This leads
us to the question when such a projection $\mathbf{p}$ in the corona algebra of $D$ exists.
We note that an element $\mathbf{f}$ in the corona algebra of $D$ is represented by a finite partition and operator valued functions corresponding to finite intervals such that they differ by compact at a partition point. This approach is proven to be useful when $\mathbf{f}$ is a projection since each function corresponding to a closed interval is projection valued. Thus a projection in the corona algebra of $D$ is \emph{locally liftable} in the above sense and their discontinuities are described by pairs of projections in $B(H)$ such that whose difference is compact. Further, we can quantify their discontinuities using a tool so called the \emph{essential codimension}. This quantification allows us not only to solve lifting problem but also to give conditions for homotopy equivalence, unitary
equivalence, Murray-von Neumann equivalence, and $K_0$-equivalence of two projections in the corona algebra.

This paper is arranged as follows: In Section \ref{BDF}, we review the notion of \emph{essential codimension} of two projections in $B(H)$ and derive some facts which will be needed later using Halmos' technique of Hilbert space
decomposition. (In fact, the definition of essential codimension was
given in \cite{bdf} and some properties were provided without proofs
in \cite{br2}. Here we give a more precise definition and provide complete proofs of its properties.)
In Section \ref{Lifting}, we give a necessary and sufficient condition for
the liftability of a projection in the corona algebra of $I=C(X)\otimes
K$ where $X=(-\infty,\infty ), [0,\infty)$, $[0,1]$, $\mb{T}$. In Section \ref{Homotopy}, we give criterions for homotopy equivalence  $\sim_h$, unitary
equivalence $ \sim_u$, and Murray-von Neumann equivalence $\sim$ of two projections
$\mathbf{p}, \mathbf{q}$ in $C(I)\otimes M_n$ for some
$n$ denoting the corona algebra by $C(I)$. In addition, we clarify the condition for the liftability of
$K_0$-class of a projection, i.e., we clarify when it becomes trivial
in $K_0$-group. Thus we construct a projection which dose not
lift but its $K_0$-class does lift if applicable.  Also, we construct
examples such that $[p]_0=[q]_0$ in $K_0$ but $p \nsim q $, $p \sim
q $ but $p \nsim_{u} q$, and $p \sim_{u} q$ but $p \nsim_h q$.
In Section \ref{S:Appendix}, we review some rudiments of continuous fields of
Hilbert spaces and prove theorems and propositions which are crucial
ingredients for the lifting problem of projection in the corona
algebra and other equivalence relations as well.

 \section{Essential codimension}\label{BDF}
\begin{defn}(Brown, Douglas, and Fillmore)
 When $p, q$ are projections in $B(H)$ such that $p-q \in K$, we
 define the \textbf{essential codimension} of $p$ and $q$ which will be denoted
 as $[p:q]$. Let
 $V$, $W$ be isometries such that $VV^*=q,WW^*=p$ if $p$ and $q$ are infinite
 rank. Then
 \begin{equation*}
     [p:q]=
     \begin{cases}
     \Ind(V^*W) &\text{if $p, q$ have infinite rank,} \\
     \rank(p) - \rank(q) &\text{if $p,q$ have finite rank.}
     \end{cases}
 \end{equation*}
 \end{defn}
 In this definition, when $p, q$ have infinite rank, $[p:q]$ is independent of
 the choice of $V$ and $W$. In fact, if we have isometries $V_1, V_2$ such that
 $V_{1}V_{1}^*=q$ and $V_{2}V_{2}^*=q$, then it is easy to check that
 $U=V_{2}^*V_1$ is a unitary, $UV_{1}^*W=V_{2}^*W$. It follows that $\Ind(V_{1}^*W)=\Ind(V_{2}^*W)$. The other case is
 proved similarly.
 \begin{prop}\label{P:properties}
 $[\,:\,]$ has the following properties.

 \begin{itemize}
 \item [(1)] If $p_2 \leq p_1$, then $[p_1:p_2]$ is the usual
 codimension of $p_2$ in $p_1$ which is $\rank(p_1-p_2)$
 \item [(2)]$[p_1:p_2]= -[p_2:p_1]$
 \item [(3)]$[p_1:p_3]=[p_1:p_2]+[p_2:p_3]$
 \item [(4)]$[p_1+p_{1}':p_2+p_{2}']=[p_1:p_2]+[p_{1}':p_{2}']$  \quad when sensible.
 \end{itemize}
\end{prop}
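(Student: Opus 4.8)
The plan is to reduce everything to two standard facts about the Fredholm index on $B(H)$: that $\operatorname{Ind}(AB)=\operatorname{Ind}(A)+\operatorname{Ind}(B)$ for Fredholm $A,B$, and that the index is unchanged under compact perturbation and satisfies $\operatorname{Ind}(A^*)=-\operatorname{Ind}(A)$. First I would record the observation, implicit in the definition, that if $V,W$ are isometries with $VV^*=q$, $WW^*=p$ and $p-q\in K$, then $V^*W$ is Fredholm: since $W^*pW=I$ and $V^*qV=I$, one gets $(V^*W)^*(V^*W)=W^*qW=I+W^*(q-p)W$ and $(V^*W)(V^*W)^*=V^*pV=I+V^*(p-q)V$, so $V^*W$ is unitary modulo $K$. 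I would also note once and for all that $p-q\in K$ forces $p,q$ to be simultaneously of finite or of infinite rank, so the two clauses of the definition are consistent, and that in the all-finite-rank situations each of (1)--(4) is a trivial identity among ranks of subordinate or orthogonal projections; so only the infinite-rank cases need discussion.

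For (1), given $p_2\leq p_1$ of infinite rank with $p_1-p_2$ of finite rank $k$, I would take an isometry $W$ onto $p_1H$, form the projection $e=W^*p_2W$ (infinite rank, co-rank $k$), choose an isometry $V_0$ onto $eH$, and set $V=WV_0$; then $VV^*=WeW^*=p_2$, $V$ is an isometry, and $V^*W=V_0^*$, whence $[p_1:p_2]=\operatorname{Ind}(V_0^*)=k$. For (2), with $VV^*=p_2$, $WW^*=p_1$, independence of the choice of isometries lets me compute $[p_2:p_1]$ using $W,V$ in place of $V,W$, so $[p_2:p_1]=\operatorname{Ind}(W^*V)=\operatorname{Ind}((V^*W)^*)=-\operatorname{Ind}(V^*W)=-[p_1:p_2]$. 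For (3), with isometries $U,V,W$ onto $p_3H,p_2H,p_1H$ (all three of $U^*V$, $V^*W$, $U^*W$ being Fredholm by the above), I would write $U^*W-(U^*V)(V^*W)=U^*(I-VV^*)W=U^*(I-p_2)W$, and since $(I-p_1)W=0$ this equals $U^*(p_1-p_2)W\in K$; multiplicativity and compact-invariance of the index then give $\operatorname{Ind}(U^*W)=\operatorname{Ind}(U^*V)+\operatorname{Ind}(V^*W)$, which is (3).

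For (4), in the sensible case $p_1\perp p_1'$, $p_2\perp p_2'$, $p_1-p_2\in K$, $p_1'-p_2'\in K$, I first note that $p_2p_1'=p_2(p_1'-p_2')\in K$ and $p_2'p_1=p_2'(p_1-p_2)\in K$. When all four projections are infinite rank, I fix an identification $H\cong H\oplus H$ via isometries $s_1,s_2$ with $s_i^*s_j=\delta_{ij}I$ and $s_1s_1^*+s_2s_2^*=I$, choose isometries $V_1,V_1',W_1,W_1'$ onto $p_2H,p_2'H,p_1H,p_1'H$, and set $S=V_1s_1^*+V_1's_2^*$, $T=W_1s_1^*+W_1's_2^*$; orthogonality of the summands makes $S,T$ isometries with $SS^*=p_2+p_2'$, $TT^*=p_1+p_1'$. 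Expanding $S^*T$, the off-diagonal terms $s_1(V_1^*W_1')s_2^*$ and $s_2(V_1'^*W_1)s_1^*$ are compact (they carry the factors $p_2p_1'$, $p_2'p_1$), while the diagonal part is the block operator $s_1(V_1^*W_1)s_1^*+s_2(V_1'^*W_1')s_2^*$, of index $\operatorname{Ind}(V_1^*W_1)+\operatorname{Ind}(V_1'^*W_1')$; hence $[p_1+p_1':p_2+p_2']=\operatorname{Ind}(S^*T)=[p_1:p_2]+[p_1':p_2']$. The remaining case — one pair, say $p_1,p_2$, of finite rank and the other of infinite rank (their types must match across the primes since $p_1-p_2,\,p_1'-p_2'\in K$) — I would not treat by this computation but reduce to (1)--(3): inserting $p_1'$ and then $p_2'$ as middle terms gives $[p_1+p_1':p_2+p_2']=[p_1+p_1':p_1']+[p_1':p_2']+[p_2':p_2+p_2']=\operatorname{rank}(p_1)+[p_1':p_2']-\operatorname{rank}(p_2)$ by (1) and (2), which equals $[p_1:p_2]+[p_1':p_2']$.

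The main obstacle I anticipate is not a single step but the bookkeeping in (4): ensuring the finite/infinite subcases are exhaustively covered, checking that the cross terms are genuinely compact, and confirming that the assembled operators $S,T$ are isometries onto the intended ranges with the claimed block structure of $S^*T$. Everything else is a few lines once the Fredholmness of $V^*W$ is in hand.
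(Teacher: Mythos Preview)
Your proposal is correct and follows essentially the same route as the paper: (2) and (3) are handled identically via $\Ind(A^*)=-\Ind(A)$ and compact invariance plus multiplicativity of the index, and (4) is treated with the same three-subcase split, the infinite--infinite case by realizing $H\cong H\oplus H$ (you via Cuntz isometries $s_1,s_2$, the paper via a unitary $U:H\to H\oplus H$) and the mixed case by the same chain-rule reduction to (1)--(3). The only cosmetic difference is in (1), where the paper observes directly that $V_2^*V_1$ is a co-isometry and computes $\dim\ker(V_2^*V_1)=\Tr(V_1^*(p_1-p_2)V_1)=\rank(p_1-p_2)$, whereas you manufacture specific isometries so that $V^*W=V_0^*$; both are one-line arguments.
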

\begin{proof}
 For (1), let $V_i$ be the isometries corresponding to $p_{i}$ for $i=1,2$. Then $V_{2}^*V_{1}$ is a
 co-isometry because $p_2p_1=p_2$. Hence $\Ind(V_{2}^*V_{1})=\dim \ker (V_{2}^*V_{1})= \rank
 (1-(V_{2}^*V_{1})^{*}V_{2}^*V_{1})=
 \Tr(V_{1}^*(p_1-p_2)V_{1})=\Tr(p_1-p_2)=\rank(p_1-p_2)$ since $p_1-p_2 \in
 K$, where $\Tr: L^1(H) \to \mathbb{C}$ is the usual trace map.

 (2) is evident from the definition.

 For (3), if $p_{i}$'s have finite rank, it is easy. If $p_{i}$'s
 have infinite rank, we take the corresponding isometries such that
 $V_{i}V_{i}^*=p_{i}$. Then $V_{3}^{*}V_{2}V_{2}^*V_{1}- V_{3}^{*}V_1 \in
 K$, and therefore $\Ind(V_{3}^*V_1)=\Ind(
 V_{3}^{*}V_{2}V_{2}^*V_{1})=\Ind(V_{3}^*V_{2})+\Ind(V_{2}^*V_{1})$.

 Finally, note that $p_{i}+p_{i}'$ is a projection if and only if $p_{i}$ and
 $p_{i}'$ have orthogonal ranges or $p_{i}p_{i}'=0$. If both $p_{i}$ and
 $p_{i}'$ have finite rank, then
 \begin{equation*}
  \rank(p_{i}+p_{i}')=\rank(p_{i})+\rank(p_{i}') \quad \mbox{for}\quad i=1,2
 \end{equation*}
 It follows that
\begin{align*}
  [p_1+p_{1}':p_2 + p_{2}']  &=\rank(p_{1}+p_{1}')-
  \rank(p_{2}+p_{2}')\\
                                   &=\rank(p_{1})+\rank(p_{1}') - (\rank(p_{2})+
                                   \rank(p_{2}'))\\
                                   &=\rank(p_{1})-\rank(p_{2})+ \rank(p_{1}')-
                                   \rank(p_{2}')\\
                                   &=[p_1:p_2]+[p_{1}':p_{2}'].
\end{align*}
If $p_{i}$ has infinite rank and $p_{i}'$ has finite rank,
then
  $p_{i}+p_{i}'-p_{i} \in K$ for $i=1,2$.
\begin{align*}
   [p_1+p_{1}': p_2 + p_{2}'] &=[p_{1}+p_{1}':p_{1}]+[p_{1}:p_{2}]+[p_{2}:p_{2}+p_{2}'] \\
                                    &=\rank(p_{1}')+[p_{1}:p_{2}]-\rank (p_{2}')\\
                                    &=[p_1:p_2]+[p_{1}':p_{2}'].
\end{align*}
 If both $p_{i}$ and $p_{i}'$ have infinite rank, note that
 $p_{1}'p_{2}, p_{2}'p_{1} \in K$. If we let $U:H \to H \oplus
 H$ be a unitary element, then $V=U^*(V_{1}\oplus V_{1}')U$ and
 $W=U^{*}(V_{2}\oplus V_{2}')U$ are the isometries corresponding to
 $p_{1}+p_{1}'$ and $p_{2}+p_{2}'$ respectively, where $V_{i}$ and $V_{i}'$ are
 isometries such that $V_iV_{i}^{*}=p_{i}$ and
 $V_{i}'{V_{i}'}^{\ast}=p_{i}'$ for $i=1,2$. So
 \begin{align*}
 [p_{1}+p_{1}': p_{2}+p_{2}'] &= \Ind(V^*W)\\
                                    &= \Ind(U^*(V_{1}^*V_{2} \oplus (V_{1}')^*V_{2}' )U) \\
                                    &= \Ind( V_{1}^*V_{2} \oplus (V_{1}')^*V_{2}' ) \\
                                    &= \Ind( V_{1}^*V_{2}) + \Ind ((V_{1}')^*V_{2}') \\
                                    &= [p_1:p_2]+[p_{1}':p_{2}'].
 \end{align*}
 \end{proof}
 \begin{lem}\label{L:unitaryequi}
  Let $p$ and $q$ be projections in $B(H)$ such that $p-q \in K$. If there is a
  unitary $U \in 1+ K$ such that $UpU^*=q$, then $[p:q]=0$. In
  particular,
  if $\| p- q\| <1 $, then $[p:q]=0$.
 \end{lem}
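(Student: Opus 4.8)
The plan is to reduce the computation of $[p:q]$ to an index calculation and then show that the relevant Fredholm operator has trivial index because it differs from a unitary by a compact. Concretely, suppose first that $p$ and $q$ have infinite rank. Choose isometries $V,W$ with $VV^*=q$ and $WW^*=p$, so that $[p:q]=\Ind(V^*W)$. Since $UpU^*=q$, the operator $UW$ is an isometry with $(UW)(UW)^*=UpU^*=q$, so by the independence of $[p:q]$ from the choice of isometry (established right after the definition) we may take $V=UW$; then $V^*W=W^*U^*W$. Now $U\in 1+K$, so $U^*\in 1+K$ and hence $W^*U^*W = W^*W + W^*(U^*-1)W = 1 + (\text{compact}) = 1 + k$ for some $k\in K$. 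An operator of the form $1+k$ with $k$ compact is Fredholm of index $0$, so $[p:q]=\Ind(V^*W)=0$.

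If instead $p$ and $q$ have finite rank, then $UpU^*=q$ forces $\rank(p)=\rank(q)$ since $U$ is unitary, and therefore $[p:q]=\rank(p)-\rank(q)=0$. (The mixed case, one of $p,q$ infinite and the other finite rank, cannot occur here: $UpU^*=q$ with $U$ unitary preserves rank, so $p$ and $q$ are either both finite rank or both infinite rank.) This disposes of the first assertion.

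For the ``in particular'' clause, assume $\|p-q\|<1$. The standard trick is to set $z = qp + (1-q)(1-p)$, a well-known element intertwining $p$ and $q$, and to massage it into a unitary in $1+K$. One checks $zp = qp = qz$, so $zpz^{-1}=q$ once $z$ is invertible; and $1 - z = q(1-p) - (1-q)p$, which has norm at most $\|p-q\|<1$ (a short computation using that $q(1-p)$ and $(1-q)p$ have orthogonal ranges and orthogonal domains, so the norm of their difference is $\max(\|q(1-p)\|,\|(1-q)p\|)\le\|p-q\|$), hence $z$ is invertible. Moreover $1-z = q(1-p)-(1-q)p$ lies in $K$ because $p-q\in K$. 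Now $z$ is invertible in $1+K$ but need not be unitary; applying the unitary part $U$ of its polar decomposition $z = U|z|$, one has $U = z|z|^{-1}$ with $|z| = (z^*z)^{1/2} = (1 + \text{compact})^{1/2} \in 1+K$, so $U\in 1+K$ as well, and $UpU^* = q$ still holds because $|z|$ commutes with $p$ (since $z^*z$ does). The first part of the lemma then gives $[p:q]=0$.

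The main obstacle is the bookkeeping in the ``in particular'' clause: verifying that $z = qp+(1-q)(1-p)$ genuinely intertwines $p$ and $q$, that $\|1-z\|\le\|p-q\|$ so that $z$ is invertible, that $1-z$ is compact, and that passing to the unitary in the polar decomposition preserves both membership in $1+K$ and the intertwining relation. None of these steps is deep, but they must be assembled carefully; once the unitary $U\in 1+K$ with $UpU^*=q$ is in hand, the index argument of the first paragraph finishes the proof immediately.
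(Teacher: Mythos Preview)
Your proof is correct and follows essentially the same route as the paper's: the identical index computation via $V=UW$ for the first assertion, and the same intertwiner $z=qp+(1-q)(1-p)$ followed by polar decomposition for the ``in particular'' clause. One minor slip: $1-z = q(1-p)+(1-q)p$, not $q(1-p)-(1-q)p$ (the latter simplifies to $q-p$); fortunately your orthogonal-ranges-and-supports norm estimate applies verbatim to the correct expression, and the paper sidesteps this by computing $z^*z=zz^*=1-(p-q)^2$ directly to get invertibility.
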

 \begin{proof}
 If ranges of $p$ and $q$ are finite dimensional, $\rank(p)= \rank(UpU^*)=\rank(q)$.
 Now assume $p$ and $q$ are infinite dimensional and let $W$ be the
 isometry such that $WW^*=p$. If there is a unitary $U \in 1 + K$  such that $UpU^*=q$,
  then it is easily checked that $V=UW$ is an isometry such that $VV^*=q$.
 Therefore,
 \begin{align*}
 [p:q] &= \Ind(V^*W) \\
       &= \Ind(W^*UW) \\
       &= \Ind(W^*W+ \mbox{compact})\\
       &= \Ind(I)=0 .
 \end{align*}
 Now if $\|p-q\| <1$, we can take $a=(1-q)(1-p)+qp \in 1+ K$. Since
 $aa^*=a^*a=1-(p-q)^2 \in 1+ K$,

   \[\|a^*a -1\|=\|p-q\|^2 < 1, \quad \|aa^*-1\|=\|p-q\|^2 < 1. \] Moreover, it
   follows that
     \begin{align*}
     ap &= qp =qa.
     \end{align*}
  Hence, $a$ is invertible element and $U=a(a^*a)^{-\frac{1}{2}} \in 1+K$
  is a unitary such that $UpU^*=q$.
\end{proof}

 If a pair of projections $p,q$ such that $p-q \in K$ are given, we can diagonalize $p-q$ by the
 Spectral theorem. We denote by $E_{\lambda}(p-q)$ the eigenspace of $p-q$ corresponding to the eigenvalue $\lambda$.
 Then, relying on a classification of a pair of projections due to Halmos \cite{Ha}, we have the following characterization of $[p:q]$.
 \begin{prop}\label{P:decomposition}
  $$
  [p:q] = \dim E_{1}(p-q) - \dim E_{-1}(p-q).
  $$
 \end{prop}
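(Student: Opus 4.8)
The plan is to use Halmos' two-projection theorem to put the pair $(p,q)$ into a standard form, and then reduce the computation of $[p:q]$ to a direct sum of elementary pieces. First I would invoke the Halmos decomposition: since $p-q\in K$, the Hilbert space $H$ splits as an orthogonal direct sum of subspaces reducing both $p$ and $q$, on which the pair $(p,q)$ acts in one of a small number of model ways — namely $(1,1)$, $(1,0)$, $(0,1)$, $(0,0)$, and a ``generic'' part on which $H_{\mathrm{gen}}\cong H_0\oplus H_0$ with $p=\ma{1}{0}{0}{0}$ and $q=\ma{c^2}{cs}{cs}{s^2}$ for commuting positive contractions $c,s$ with $c^2+s^2=1$ and $c,s$ having trivial kernel. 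On the generic part, $p-q$ is unitarily equivalent to $\ma{s^2}{-cs}{-cs}{-s^2}$, whose spectrum is symmetric about $0$ (the eigenvalues come in $\pm$ pairs $\pm s$), and in particular $\pm 1$ are not eigenvalues there since $s<1$; so the generic part contributes $0$ to the right-hand side. The ranks that survive in $E_{\pm1}(p-q)$ therefore come only from the $(1,0)$ part (contributing to $E_1$) and the $(0,1)$ part (contributing to $E_{-1}$).

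Next I would compute $[p:q]$ using the additivity property (4) of Proposition \ref{P:properties}, applied to the decomposition of $1$ as the sum of the projections onto these reducing subspaces. On the $(1,1)$ and $(0,0)$ parts we get $[1:1]=0$ and $[0:0]=0$. On the $(1,0)$ part, $p$ restricts to the identity and $q$ to $0$ on that subspace; since $p-q\in K$ there, that subspace is finite-dimensional, and the contribution is its dimension, which is exactly $\dim E_1(p-q)$ restricted there. Symmetrically the $(0,1)$ part contributes $-\dim E_{-1}(p-q)$. For the generic part I need that $[p\restriction_{\mathrm{gen}}:q\restriction_{\mathrm{gen}}]=0$; this follows from Lemma \ref{L:unitaryequi} once I produce a unitary $U\in 1+K$ on $H_{\mathrm{gen}}$ with $UpU^*=q$ — and indeed on the generic part one can build such a $U$ explicitly from $c$ and $s$ (e.g. a rotation-type unitary $\ma{c}{-s}{s}{c}$ conjugating the model $p$ to the model $q$), which lies in $1+K$ precisely because $s\in K$ there (as $p-q$ is compact and its generic block has $s$-dependent entries). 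Summing the four/five contributions via property (4) gives $[p:q]=\dim E_1(p-q)-\dim E_{-1}(p-q)$.

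The main obstacle I anticipate is bookkeeping with the Halmos decomposition: making precise that the compactness of $p-q$ forces the $(1,0)$ and $(0,1)$ parts to be finite-dimensional and forces $c,s$ on the generic part to be compact perturbations of the relevant scalars, and then checking that the generic-part unitary one writes down genuinely lies in $1+K$ so that Lemma \ref{L:unitaryequi} applies. A secondary technical point is handling the boundary of the generic part — eigenvalues of $s$ accumulating at $0$ are harmless, but one must confirm no mass escapes to eigenvalue $\pm1$ of $p-q$ from the generic block, which is immediate since there $\|p-q\|$ restricted to each spectral piece is $<1$ away from a finite-dimensional part. Once these are nailed down, the identity drops out of properties (1)–(4) and Lemma \ref{L:unitaryequi} with no further computation.
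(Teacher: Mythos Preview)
Your proposal is correct and follows essentially the same route as the paper: the Halmos two-subspace decomposition into the four intersection pieces plus the generic block, the observation that compactness of $p-q$ forces the $(1,0)$ and $(0,1)$ blocks to be finite-dimensional and forces $s$ to be compact on the generic block, the explicit rotation unitary $\ma{c}{-s}{s}{c}\in 1+K$ on the generic block so that Lemma~\ref{L:unitaryequi} gives zero contribution there, and the additivity from Proposition~\ref{P:properties} to assemble the pieces. The paper carries out exactly these steps (writing $c=\cos\phi$, $s=\sin\phi$ and noting $\phi(x_n)\to 0$), so there is nothing to add.
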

  \begin{proof}
  Let $M$ and $N$ be the range of $p$ and $q$ respectively, and let
  $H_{11}=M \cap N$, $H_{10}=M \cap N^{\perp}$, $H_{01}=M^{\perp} \cap N $,
  $H_{00}= M^{\perp} \cap N^{\perp}$ and $H_0=(H_{00}\oplus H_{10} \oplus H_{01}
  \oplus H_{11})^{\perp}$. It is possible to identify both $H_0 \cap M$ and $H_0
  \cap M^{\perp}$ with $L^{2}(X)$ for some measure space $X$ in such a way that
  the $p$ on the $H_0$ which is denoted by $p_0$ and the $q$ on the $H_{0}$
  which is denoted by $q_0$ are given by
  \[
     p_0=\begin{pmatrix}
      1 & 0 \\
      0 & 0
     \end{pmatrix} \quad \mbox{and} \quad
     q_0=\begin{pmatrix}
      \cos^2 \phi & \cos \phi \sin \phi \\
      \cos \phi \sin \phi  & \sin^2 \phi
     \end{pmatrix}
  \]
  where $\phi $ is a measurable function on $X$ such that $0< \phi(x) <
  \dfrac{\pi}{2}$ for $x \in X$. Here $p_0$ and $q_0$ operate on $L^{2}(X)
  \oplus L^{2}(X)$ and the matrices are operator matrices whose entries are
  multiplication operators (see \cite[280--281]{br1}).

  If we denote $p$ on the $H_{11}$ by $p_{11}$ and  $p$ on the $H_{10}$ by
  $p_{10}$, then $p=p_{11}+p_{10}+p_{0}$. Similarly,  $q=
  q_{11}+q_{01}+q_{0}$. Now $p-q= p_{10} - q_{01}+p_0 - q_0  \in K$
  implies that $p_0 -q_0 \in K(H_0)$ and $p_{10}-q_{01} \in K(H_{10} \oplus
  H_{01})$. Then
  \[
  p_0-q_0 = \begin{pmatrix}
    \sin^2 \phi & -\cos \phi \sin \phi \\
      -\cos \phi \sin \phi  & -\sin^2 \phi
   \end{pmatrix} \in K(H_0)
  \]
  implies that $X$ is a discrete space $\{ x_n \}$ and $\phi(x_n) \to
  0 \quad \mbox{as} \quad n \to \infty$. Therefore
  \[U= \bigoplus _{n=1}\begin{pmatrix}
      \cos \phi(x_n) & \sin \phi(x_n) \\
      -\sin \phi(x_n) &  \cos \phi(x_n)
     \end{pmatrix} \in 1+K(H_0). \]
   From
  \[
     \begin{pmatrix}
      \cos \phi & -\sin \phi \\
      \sin \phi &  \cos \phi
     \end{pmatrix}     \begin{pmatrix}
      1 & 0 \\
      0 & 0
     \end{pmatrix}
     \begin{pmatrix}
      \cos \phi & \sin \phi \\
      -\sin \phi &  \cos \phi
     \end{pmatrix}= \begin{pmatrix}
      \cos^2 \phi & \cos \phi \sin \phi \\
      \cos \phi \sin \phi  & \sin^2 \phi
     \end{pmatrix},
  \] it follows that $U^*p_0U=q_0$, and $[p_0:q_0]=0$ by  Lemma \ref{L:unitaryequi}.
  On the other hand, $p_{10}-q_{01} \in K(H_{10}\oplus H_{01}) $  means
  $\rank(p_{10})$ and $\rank(q_{01})$ are finite, and
  \begin{align*}
  [p_{10}:q_{01} ] &=\rank(p_{10}) -\rank(q_{01}) \\
                   &=\dim (E_{1}(p-q)) - \dim (E_{-1}(p-q)).
  \end{align*}
   Since $[p:q]=[p_0:q_0]+[p_{10}:q_{01}]+[p_{11}:q_{11}]$ by Proposition \ref{P:properties}, $[p:q] = [p_{10}:q_{01}]$, thus the result follows.
  \end{proof}
 \begin{rem}\label{R:remark1}
 \begin{itemize}
 \item[(i)] In fact, since  $\dim(E_{1}(p-q))$ = \# of 1 in the diagonalization of
 $p-q$ and   $\dim(E_{-1}(p-q))$ = \# of -1 in the diagonalization of
 $p-q$, $[p:q]$ = \mbox{\# of 1 in the diagonalization of $(p-q)$} - \mbox{\# of -1 in
 the diagonalization of
 $(p-q)$}.
 \item[(ii)] The other non-zero points in the spectrum of $p-q$ come from $p_0
 -q_0$. They are $\sin \phi(x_n)$ and $-\sin \phi(x_n)$. Note that this part of
 the spectrum is symmetric about $0$ (even considering multiplicity), i.e., $\dim
  \ker(p_0 -q_0-\sin \phi) =  \dim
 \ker(p_0 -q_0+\sin \phi)$.
  \end{itemize}
 \end{rem}

 \begin{cor}\label{C:homotopy}
 Suppose projections $p_t,  q_t \in B(H)$ are defined for each $t$.  Then
 $[p_t:q_t]$ is constant if $p_t - q_t$ is norm continuous  in $ K$
 \end{cor}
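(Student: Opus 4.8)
The plan is to show that $t\mapsto[p_t:q_t]$ is locally constant; since it takes integer values and the parameter ranges over an interval, this forces it to be constant. Write $a_t=p_t-q_t\in K$. The one real difficulty is that only $a_t$, and not $p_t$ or $q_t$ separately, is assumed norm continuous, so we cannot simply perturb the projections; instead we must express $[p_t:q_t]$ entirely in terms of the spectrum of the compact self-adjoint operator $a_t$ and then exploit the $\pm$-symmetry of that spectrum recorded in Remark \ref{R:remark1}(ii).

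The first step is to establish the spectral formula
$$[p:q]=\rank \chi_{[s,1]}(p-q)-\rank \chi_{[-1,-s]}(p-q)\qquad(0<s<1),$$
valid whenever $p-q\in K$. Indeed $p-q$ is compact and self-adjoint with $\|p-q\|\le1$, so for $s>0$ the spectral projections $\chi_{[s,1]}(p-q)$ and $\chi_{[-1,-s]}(p-q)$ are of finite rank. Decomposing $\chi_{[s,1]}(p-q)=E_1(p-q)\oplus\chi_{[s,1)}(p-q)$ and $\chi_{[-1,-s]}(p-q)=E_{-1}(p-q)\oplus\chi_{(-1,-s]}(p-q)$, and using that the part of $\sigma(p-q)$ lying in $(-1,1)\setminus\{0\}$ is symmetric about $0$ with multiplicities (Remark \ref{R:remark1}(ii)), we get $\rank\chi_{[s,1)}(p-q)=\rank\chi_{(-1,-s]}(p-q)$; subtracting and invoking Proposition \ref{P:decomposition} gives the formula.

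Now fix a parameter value $t_0$. Since $\sigma(a_{t_0})$ is discrete away from $0$, choose $s\in(0,1)$ and $\eta\in(0,s)$ with $s+\eta<1$ so that $\sigma(a_{t_0})$ misses $[s-\eta,s+\eta]$, hence by symmetry also $[-s-\eta,-s+\eta]$. For $t$ with $\|a_t-a_{t_0}\|<\eta/2$, the standard estimate $\mathrm{dist}(\lambda,\sigma(a_{t_0}))\le\|a_t-a_{t_0}\|$ for $\lambda\in\sigma(a_t)$ shows $\sigma(a_t)$ misses $[s-\eta/2,s+\eta/2]$ and $[-s-\eta/2,-s+\eta/2]$. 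Fix a Lipschitz function $h$ with $h\equiv0$ on $(-\infty,s-\eta/2]$ and $h\equiv1$ on $[s+\eta/2,\infty)$; then $h(a_t)=\chi_{[s,1]}(a_t)$ and $h(a_{t_0})=\chi_{[s,1]}(a_{t_0})$, so $\|\chi_{[s,1]}(a_t)-\chi_{[s,1]}(a_{t_0})\|\le L\|a_t-a_{t_0}\|<1$ once $t$ is close enough to $t_0$; as these are finite-rank projections, Lemma \ref{L:unitaryequi} gives $\rank\chi_{[s,1]}(a_t)=\rank\chi_{[s,1]}(a_{t_0})$. Applying the same reasoning to $x\mapsto h(-x)$ for $\chi_{[-1,-s]}$ and then the displayed formula yields $[p_t:q_t]=[p_{t_0}:q_{t_0}]$ for all $t$ near $t_0$, i.e. local constancy, whence the result.

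The main obstacle is exactly this lack of individual continuity of $p_t$ and $q_t$: everything has to be routed through the spectral picture of $p_t-q_t$, and it is the symmetry of that spectrum that makes $\dim E_1$ and $\dim E_{-1}$ vary "in tandem," so that their difference $[p_t:q_t]$ does not move even though each term separately can jump. A secondary point worth noting is that one cannot take $s=0$, since $\dim\chi_{(0,1]}(p-q)$ and $\dim\chi_{[-1,0)}(p-q)$ may be infinite; a strictly positive cutoff is what keeps the ranks finite and the argument clean.
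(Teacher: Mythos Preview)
Your proof is correct and follows essentially the same route as the paper's: both arguments work entirely with the compact self-adjoint operator $a_t=p_t-q_t$, use the symmetry of its spectrum in $(-1,1)\setminus\{0\}$ from Remark~\ref{R:remark1}(ii) together with Proposition~\ref{P:decomposition} to express $[p_t:q_t]$ as a difference of ranks of spectral projections away from~$0$, and then invoke norm-continuity of these finite-rank spectral projections plus Lemma~\ref{L:unitaryequi} to conclude local constancy. The only cosmetic difference is that the paper uses Riesz contour integrals for the spectral projections while you use a Lipschitz functional calculus on $\chi_{[s,1]}$ and $\chi_{[-1,-s]}$; the underlying idea is identical.
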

 \begin{proof}
 In fact, we are going to prove that there is a $\delta > 0$ such that whenever
 $A_1=p_1-q_1, A_2=p_2-q_2$ satisfy $\| A_1- A_2\| < \delta$, then
 $[p_1:q_1]=[p_2:q_2]$.

 Since $A_1$ is compact, its spectrum $\sigma(A_1)$ is discrete. So we can take a
 neighborhood $U=O_{-1}(\epsilon) \cup (-1+\epsilon,1-\epsilon) \cup O_{1}(\epsilon) $ containing $\sigma(A_1)$, where
 $O_{\pm1}(\epsilon)$ are open balls with radius $\epsilon $  centered at $\pm 1$
 respectively, for some $\epsilon >0$. By the semicontinuity of spectrum, if
 we take $\delta$ small enough, we know $\sigma(A_2) \subset U$. If we let $\gamma_{\pm1}$ be the
 circles of radius $R$ within $O_{\pm1}(\epsilon)$ centered at $\pm1$ which does not intersect both $\sigma(A_1)$ and $\sigma(A_1)$ , by the Riesz functional calculus, we
 have projections $r_i=\frac{1}{2\pi i}\int_{\gamma_{1}}(z-A_{i})^{-1}dz$ and
 $s_i=\frac{1}{2\pi i}\int_{\gamma_{-1}}(z-A_{i})^{-1}dz$ for $i=1,2$.  Moreover, if we take $\delta$
 small enough, we also have $\|r_{1}- r_{2}\|<1, \|s_{1}-
 s_{2}\|< 1$. Then, by Lemma \ref{L:unitaryequi}, $\rank(r_1)=\rank(r_2)$ and $
 \rank(s_1)=\rank(s_2)$.

 Note that $\rank(s_i)= \sum_{-1 < \lambda
 <-1+R} \,\dim \ker(A_i - \lambda 1) + \dim \ker(A_i+1) $.
 Similarly, $\rank(r_i)= \sum_{1-R < \lambda
 <1} \,\dim \ker(A_i - \lambda 1) + \dim \ker(A_i-1) $. Then
 Remark \ref{R:remark1}-(ii) implies
 \begin{align*}
 [p_1:q_1] &= \dim \ker(A_1-1) - \dim \ker(A_1+1) \\
                     &=\rank(r_1)-\rank(s_1)  \\
             &= \rank(r_2)- \rank(s_2) \\
             &= \dim \ker(A_2-1) - \dim \ker(A_2+1) \\
             &= [p_2:q_2].
 \end{align*}
 \end{proof}
 Now we want to prove the most important property of the essential codimension.
 \begin{thm}\label{T:homotopy}
  Let $p$ and $q$ be projections in $B(H)$ such that $p-q \in K$.  There is a
  unitary $U \in 1+ K$ such that $UpU^*=q$ if and only if $[p:q]=0$.
 \end{thm}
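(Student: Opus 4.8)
The plan is to prove the two implications separately. The forward direction --- if a unitary $U\in 1+K$ with $UpU^{*}=q$ exists, then $[p:q]=0$ --- is precisely Lemma \ref{L:unitaryequi}, so nothing new is required there. The substance lies in the converse: assuming $[p:q]=0$, I must construct a unitary $U\in 1+K$ with $UpU^{*}=q$.

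For the converse I would reuse the Halmos-style decomposition already set up in the proof of Proposition \ref{P:decomposition}. With $M=pH$ and $N=qH$, write $H=H_{11}\oplus H_{10}\oplus H_{01}\oplus H_{00}\oplus H_{0}$, where $H_{11}=M\cap N$, $H_{10}=M\cap N^{\perp}$, $H_{01}=M^{\perp}\cap N$, $H_{00}=M^{\perp}\cap N^{\perp}$, and $H_{0}$ is the orthogonal complement of their sum. On $H_{11}$ both $p$ and $q$ act as the identity and on $H_{00}$ both act as $0$, so the identity operator conjugates $p$ to $q$ on $H_{11}\oplus H_{00}$. On $H_{0}$, the proof of Proposition \ref{P:decomposition} already produces an explicit unitary $U_{0}\in 1+K(H_{0})$ with $U_{0}p_{0}U_{0}^{*}=q_{0}$ (alternatively, the nonzero spectrum of $p_{0}-q_{0}$ is $\{\pm\sin\phi(x_{n})\}$ with $0<\phi(x_{n})<\tfrac{\pi}{2}$ and $\phi(x_{n})\to 0$, so $\|p_{0}-q_{0}\|<1$ and Lemma \ref{L:unitaryequi} applies directly).

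The only genuinely new piece is the summand $H_{10}\oplus H_{01}$, on which $p$ restricts to the projection $p_{10}$ onto $H_{10}$ and $q$ to the projection $q_{01}$ onto $H_{01}$. Since $p-q$ is compact, $H_{10}=E_{1}(p-q)$ and $H_{01}=E_{-1}(p-q)$ are finite-dimensional, and the hypothesis $[p:q]=0$ together with Proposition \ref{P:decomposition} forces $\dim H_{10}=\dim H_{01}$. Picking any unitary isomorphism $w\colon H_{10}\to H_{01}$ and setting $U_{1}=\bigl(\begin{smallmatrix}0 & w^{*}\\ w & 0\end{smallmatrix}\bigr)$ on $H_{10}\oplus H_{01}$ yields a unitary carrying $H_{10}$ onto $H_{01}$, hence $U_{1}p_{10}U_{1}^{*}=q_{01}$; moreover $U_{1}-1$ has finite rank because $H_{10}\oplus H_{01}$ is finite-dimensional.

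Finally I would assemble $U=1_{H_{11}}\oplus U_{1}\oplus 1_{H_{00}}\oplus U_{0}$. A block-by-block check gives $UpU^{*}=q$, while $U-1$ is the direct sum of a finite-rank operator and $U_{0}-1_{H_{0}}\in K(H_{0})$, hence compact, so $U\in 1+K$. I do not expect a serious obstacle here; the point that needs attention is confirming that $H_{10}$ and $H_{01}$ are finite-dimensional (so that $U_{1}$ really is a compact perturbation of the identity and the dimension count makes sense), which follows from compactness of $p-q$, together with bookkeeping for the degenerate cases in which some of the summands $H_{ij}$ or $H_{0}$ vanish.
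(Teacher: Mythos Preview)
Your proof is correct and follows essentially the same route as the paper: invoke Lemma~\ref{L:unitaryequi} for one direction, and for the converse use the Halmos decomposition from Proposition~\ref{P:decomposition}, handling $H_0$ via the rotation unitary $U_0\in 1+K(H_0)$ constructed there and the finite-dimensional piece $H_{10}\oplus H_{01}$ via an explicit unitary built from the equality $\dim H_{10}=\dim H_{01}$. Your off-diagonal block $U_1=\bigl(\begin{smallmatrix}0 & w^{*}\\ w & 0\end{smallmatrix}\bigr)$ is in fact a bit more explicit than the paper's version (which names a partial isometry $W$ with $W^{*}W=p_{10}$, $WW^{*}=q_{01}$ and then asserts it gives a unitary in $1+K(H_{10}\oplus H_{01})$), but the underlying idea is identical.
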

 \begin{proof}
  Lemma \ref{L:unitaryequi} proves one direction.
 For the other direction, suppose that $[p:q]=0$. Using the decomposition of $p-q$ as is
 shown in Proposition \ref{P:decomposition}, we know that $p_0$ and $q_0$ are
 unitarily equivalent, say the unitary $U_0$. In addition, from $[p:q]=\rank(p_{10}-\rank(q_{01})=0$, there
 is a partial isometry $W$ in $B(H_{10}\oplus H_{01})$ such that $W^*W=p_{10}$
 and $WW^*=q_{01}$. Note that $Wp_{10}W^*=q_{01}$ and $W$ is a unitary in
 $1+K(H_{10}\oplus H_{01})$. Now we have a unitary $U=
 U_0+W+1_{H_{11}}+1_{H_{00}} \in 1+K$ such that $UpU^*=q$.
\end{proof}
\begin{rem}
Let $E$ be a Hilbert $B$-module. If $\pi, \sigma: A \to
\mathcal{L}(E)$ are representations, we say that $\pi$ and $\sigma$
are properly asymptotically unitarily equivalent and write $\pi
\approxeq \sigma$ if there is a continuous path of unitaries
$u:[0,\infty) \to \mathcal{U}(K(E) + \mathbb{C}1_E)$, $u=(u_t)_{t\in
[0,\infty)}$ such that
\begin{itemize}
\item $\lim_{t \to \infty}\|u_t\pi(a)u_{t}\sp{*}-\sigma(a) \|=0$
for all $a \in A$
\item $u_t\pi(a)u_t\sp{*}-\sigma(a)\in K(E)$, for all $t\in
[0,\infty)$, and $a\in A.$
\end{itemize}

The use of the word 'proper' reflects the crucial fact that all
implementing unitaries are of the form 'identity + compact' \cite{daei}. A result
of Dadarlat and Eilers shows that if $\phi, \psi: A \to M(B \otimes
K)$ is a Cuntz pair, then $[\phi,\psi]$ vanishes in $KK(A,B)$
 if and only if $\phi\oplus\gamma \approxeq \psi\oplus\gamma$ for
some representation $\gamma:A \to M(B \otimes K)$. As a corollary,
they have shown if $\phi, \psi: A \to B(H)$ is a Cuntz pair of
admissible representations (faithful, non-degenerate, and its image
does not contain any non-trivial compacts), then $[\phi,\psi]$
vanishes in $KK(A,\mathbb{C})$
 if and only if $\phi \approxeq \psi$. Now we apply this to $A=\mathbb{C}$.
  Without loss of generality we can assume $p$ and
 $q$, which come from a Cuntz pair in $KK(\mathbb{C},\mathbb{C})$, are very close if $[p:q]=0$.
 Then  $z=pq+(1-p)(1-q)\in 1+K $ is invertible and
 $pz=zq$. If we consider the polar decomposition of $z$ as $z=u|z|$.
 It is easy to check that  $u \in 1+K$ and $upu^{\ast}=q$. Thus
 Theorem \ref{T:homotopy} can be obtained from a $K$-theoretic
result.
\end{rem}
 In the next section, we need the following facts.
 \begin{prop}\label{P:restrictions}
 Let $p, q$ be projections such that $p-q \in K$.
 \begin{itemize}
 \item [(1)] If $q$ has finite
 rank, then $[p:q] \geq -\rank(q)$.
 \item [(2)] If $1-q$ has finite rank, then $[p:q] \leq \rank(1-q) $
 \end{itemize}
 \end{prop}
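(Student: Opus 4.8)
The plan is to reduce each inequality to a rank computation via the additivity and decomposition results already established. For (1), since $q$ has finite rank, both $p$ and $q$ are finite-dimensional if $p$ also has finite rank, and then $[p:q] = \rank(p) - \rank(q) \geq -\rank(q)$ is immediate because $\rank(p) \geq 0$. If instead $p$ has infinite rank, that is impossible: $p - q \in K$ together with $q$ finite rank would force $p \in K$, contradicting infinite rank. So only the finite-rank case occurs and the bound is trivial. The real content is that the essential codimension can never be more negative than $-\rank(q)$, which is visible directly from the definition.

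For (2), I would dualize: apply part (1) to the projections $1-p$ and $1-q$. Note $(1-p) - (1-q) = q - p \in K$, and $1-q$ has finite rank by hypothesis, so part (1) gives $[1-p : 1-q] \geq -\rank(1-q)$. It then remains to relate $[1-p:1-q]$ to $[p:q]$. The cleanest route is Proposition \ref{P:decomposition}: since $(1-p)-(1-q) = -(p-q)$, we have $E_{1}((1-p)-(1-q)) = E_{-1}(p-q)$ and $E_{-1}((1-p)-(1-q)) = E_{1}(p-q)$, hence
\[
[1-p:1-q] = \dim E_{1}(-(p-q)) - \dim E_{-1}(-(p-q)) = \dim E_{-1}(p-q) - \dim E_{1}(p-q) = -[p:q].
\]
Combining, $-[p:q] \geq -\rank(1-q)$, i.e. $[p:q] \leq \rank(1-q)$, as desired. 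Alternatively one could use property (2) of Proposition \ref{P:properties} together with the observation that $[1-p:1-q] = -[p:q]$ follows from $1 = p + (1-p)$ and additivity (property (4)), but the spectral description is more transparent.

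I do not anticipate a genuine obstacle here; the only point requiring a moment's care is the case division in (1) — specifically, ruling out the mixed situation where $p$ is infinite rank while $q$ is finite rank, which cannot happen when $p - q$ is compact. Once that is dispatched, (1) is a one-line consequence of the definition, and (2) follows formally by passing to orthogonal complements and invoking either Proposition \ref{P:decomposition} or Proposition \ref{P:properties}.
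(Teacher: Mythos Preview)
Your proposal is correct. The paper's own proof consists of the single word ``Straightforward,'' so there is no detailed argument to compare against; your write-up is a valid and natural fleshing-out of what the authors left implicit. The case analysis in (1) is handled correctly (if $q$ is finite rank and $p-q\in K$ then $p$ is compact, hence finite rank), and your deduction of (2) from (1) via the identity $[1-p:1-q]=-[p:q]$, established through Proposition~\ref{P:decomposition}, is clean and complete.
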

 \begin{proof}
  Straightforward.
 \end{proof}
\section{Lifting projections}\label{Lifting}
Let $X$ be $[0,1], [0,\infty)$, $(-\infty,\infty)$ or
$\mathbb{T}=[0,1]/\{0,1\}$ and let $I=C(X)\otimes
   K$ which is the $C\sp{*}$-algebra of (norm continuous) functions from $X$ to $K$. Then $M(I)$ is given by $C_b(X, B(H))$ which is the set of
   bounded functions from $X$ to $B(H)$ where $B(H)$ is given the double-strong
   topology. Let $\mathcal{C}(I)=M(I)/I$ be the corona algebra of $I$ and also
   let $\pi:M(I) \to \mathcal{C}(I)$ be the natural quotient map.

 In general, an element $\mathbf{f}$ of the corona algebra $\mathcal{C}(I)$ can be represented as follows:  Consider a
 finite partition of $X$, or $X \smallsetminus \{0,1\}$ when $X=\mathbb{T}$ given by partition points $x_1 < x_2 < \cdots
 < x_n $ all of which are in the interior of $X$ and divide $X$ into
 $n+1$ (closed) subintervals $X_0,X_1,\cdots,X_{n}$. We can take $f_i \in
 C_b(X_i,B(H))$ such that $f_i(x_i) -f_{i-1}(x_i)\in K$
 for $i=1,2,\cdots,n$ and $f_0(x_0)-f_n(x_0) \in K$  where $x_0=0=1$ if $X$ is $\mathbb{T}$. The coset in $\mathcal{C}(I)$ represented by
 $(f_0,\cdots,f_n)$  consists of functions $f$ in $M(I)$ such that $f- f_i \in
 C(X_i, K)$ for every $i$ and $f-f_i $ vanishes (in norm) at any
 infinite end point of $X_i$. Then $(f_0,\cdots,f_n)$ and $(g_0,\cdots,g_n)$
 define the same element of $\mathcal{C}(A)$ if and only if $f_i - g_i \in
 C_b(X_i,K)$ for $i=0,\cdots,n$ and $f_0-g_0 \in
 C_0((-\infty,x_1],K)$ $f_n -g_n \in
 C_0([x_n,\infty),K)$ if $X$ contains infinite points.

 In particular, if  $\mathbf{f}$ is a projection, we can find $(f_0,f_1,\dots,f_n)$ such that each $f_i$ is projection valued as Theorem \ref{T:partition} shows.  Thus it makes sense to say a projection $\mathbf{f}$ is \emph{locally liftable}.

 \begin{lem}[Calkin]\label{L:calkin}
 Let $T$ be a self-adjoint element of $B(H)$ such that $T-T^{2} \in
 K$ with $\|T\| \leq 1$.
 Then $\sigma(T) \subset E \cup F $ where $E$ and $F$  are two disjoint borel sets containing 0 and 1 respectively,
 and if we let  $P_F(T)$(or $P_1(T)$) be the spectral projection of $T$
 corresponding to the $F$, then
 $ P_{1}(T) - T  \in K$.
 \end{lem}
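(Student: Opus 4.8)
The plan is to reduce the whole statement to one elementary pointwise inequality together with the standard domination principle for compact operators.

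First I would dispatch the sets $E,F$: since $T=T^{*}$ with $\|T\|\le 1$ we have $\sigma(T)\subseteq[-1,1]$, so it already suffices to take $E=(-\infty,\tfrac12)$ and $F=[\tfrac12,\infty)$, which are disjoint Borel sets with $0\in E$, $1\in F$ and $\sigma(T)\subseteq E\cup F$. I would remark in passing that the image $\dot T$ of $T$ in the Calkin algebra $B(H)/K$ satisfies $\dot T=\dot T^{\,2}=\dot T^{\,*}$, hence is a projection, so $\sigma_{\mathrm{ess}}(T)\subseteq\{0,1\}$ and $\sigma(T)$ can accumulate only at $0$ and $1$; thus the cut point $\tfrac12$ could be replaced by any $c\in(0,1)$, e.g.\ one lying in a gap of $\sigma(T)$ --- but this refinement is not needed. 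I then set $P:=P_F(T)=P_1(T)=\chi_{[\frac12,1]}(T)$, which is a projection since $\chi_{[\frac12,1]}$ is real and idempotent.

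The heart of the argument will be the estimate
\[
 |f(t)-t|\ \le\ 2\,|g(t)| \qquad\text{for all } t\in[-1,1],
\]
where $f=\chi_{[\frac12,1]}$ and $g(t)=t-t^{2}=t(1-t)$ on $[-1,1]$. To prove it I would split at $\tfrac12$: for $t\in[\tfrac12,1]$ one has $f(t)-t=1-t\ge 0$ and $g(t)=t(1-t)\ge\tfrac12(1-t)$, whence $|f(t)-t|\le 2|g(t)|$; for $t\in[-1,\tfrac12)$ one has $|f(t)-t|=|t|$ and $|g(t)|=|t|(1-t)\ge\tfrac12|t|$, which gives the same bound. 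This is the only step that uses the precise shape of $g$ --- namely that it vanishes exactly at $0$ and $1$ --- together with the choice of cut point, and I expect it to be the one genuine idea in the proof; the hypothesis $\|T\|\le 1$ is there precisely so that the inequality is needed only on $\sigma(T)\subseteq[-1,1]$.

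Finally I would feed this through the Borel functional calculus of $T$. Using $|h(T)|=|h|(T)$ for real Borel $h$, the estimate gives $|P-T|=|(f-\mathrm{id})(T)|=|f-\mathrm{id}|(T)\le 2\,|g|(T)=2\,|T-T^{2}|$. Since $T-T^{2}\in K$ we also have $|T-T^{2}|\in K$, so $0\le|P-T|\le 2|T-T^{2}|$ with compact majorant, and the domination principle (if $0\le A\le B$ with $B$ compact then $A$ is compact --- write $A=A^{1/2}\chi_{[\varepsilon,\infty)}(B)A^{1/2}+A^{1/2}\chi_{[0,\varepsilon)}(B)A^{1/2}$, the first summand of finite rank and the second of norm $\le\varepsilon$ because $\langle Ax,x\rangle\le\langle Bx,x\rangle$) then yields $|P-T|\in K$, hence $P-T=P_1(T)-T\in K$. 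Apart from spotting the pointwise estimate, everything is just the spectral theorem and this routine compactness fact, so I do not anticipate a serious obstacle.
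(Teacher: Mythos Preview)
Your argument is correct: the pointwise inequality $|\chi_{[\frac12,1]}(t)-t|\le 2|t-t^2|$ on $[-1,1]$ holds exactly as you check it, the Borel functional calculus transfers it to $|P_1(T)-T|\le 2|T-T^2|$, and the domination principle then gives $P_1(T)-T\in K$. The paper gives no proof of its own here---it simply cites Calkin's original paper---so there is nothing to compare your approach against.
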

 \begin{proof}
 See \cite{ca}.
 \end{proof}

\begin{thm}\label{T:partition}
  If $\mathbf{f}$ is a projection in $\mathcal{C}(I)$, we can find an
 $(f_0,\cdots,f_n)$ as above such that each $f_i$ is projection valued.
 \end{thm}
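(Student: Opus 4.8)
The plan is to start with an arbitrary representative $(g_0, g_1, \dots, g_n)$ of the projection $\mathbf{f}$, where each $g_i \in C_b(X_i, B(H))$ and $g_i(x_i) - g_{i-1}(x_i) \in K$, and to replace each $g_i$ by a genuinely projection-valued function $f_i$ without changing the coset. Since $\mathbf{f}$ is a projection, $\mathbf{f} = \mathbf{f}^* = \mathbf{f}^2$ in $\mathcal{C}(I)$, so we may first arrange each $g_i$ to be self-adjoint (replace it by $(g_i + g_i^*)/2$, which changes it only by an element of $C_b(X_i, K)$ and respects the compact-difference conditions at the endpoints) and to satisfy $\|g_i\| \le 1$ after a further harmless modification. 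The condition $\mathbf{f}^2 = \mathbf{f}$ then says precisely that $g_i - g_i^2 \in C_0$-or-$C_b(X_i, K)$; in particular, for each fixed $t \in X_i$, the operator $g_i(t)$ is a self-adjoint element of $B(H)$ with $g_i(t) - g_i(t)^2 \in K$ and norm at most $1$.

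The key step is then to apply Calkin's Lemma \ref{L:calkin} fiberwise but in a continuous fashion. For each $i$ and each $t \in X_i$, $\sigma(g_i(t))$ is contained in a neighborhood of $\{0, 1\}$, and we want the spectral projection $P_1(g_i(t))$ to vary norm-continuously in $t$ and to agree with $g_i(t)$ modulo $K$. The way to make this uniform is to observe that, because $g_i - g_i^2$ is a norm-continuous $K$-valued function on the (locally) compact interval $X_i$ (vanishing at infinite endpoints), one has a uniform spectral gap: there is $\delta > 0$ with $\sigma(g_i(t)) \cap (\delta, 1-\delta) = \emptyset$ for all $t$ — on a compact interval this is just continuity of $\|g_i(t) - g_i(t)^2\|$, and at an infinite endpoint the function $g_i$ is eventually close in norm to $g_n(x_0)$ or to a fixed projection, so the gap persists. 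Choosing a continuous function $\chi: [0,1] \to [0,1]$ that is $0$ near $[0,\delta]$ and $1$ near $[1-\delta, 1]$, we set $f_i(t) = \chi(g_i(t))$; then $f_i$ is projection-valued, norm-continuous on $X_i$ (bounded, so it lies in $C_b(X_i, B(H))$), and $f_i(t) - g_i(t) = (\chi(g_i(t)) - g_i(t))$ lies in $K$ for each $t$ with norm controlled by $\|g_i(t)-g_i(t)^2\|$, hence $f_i - g_i \in C_b(X_i, K)$ (and vanishes at infinite endpoints). At each finite partition point $x_i$, $f_i(x_i) - f_{i-1}(x_i) = (f_i(x_i) - g_i(x_i)) + (g_i(x_i) - g_{i-1}(x_i)) + (g_{i-1}(x_i) - f_{i-1}(x_i)) \in K$, and similarly at $x_0$ when $X = \mathbb{T}$, so $(f_0, \dots, f_n)$ is a legitimate representative of the same coset $\mathbf{f}$.

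The main obstacle I anticipate is the verification of the uniform spectral gap at the infinite endpoints of $X$, and more generally making sure the double-strong topology on $M(I) = C_b(X, B(H))$ does not cause trouble: a priori $g_i$ is continuous only for the double-strong topology, not the norm topology, so $\chi(g_i(\cdot))$ need not obviously be norm-continuous. The resolution is that $g_i - g_i^2$ does lie in $I = C(X_i, K)$ and is therefore \emph{norm}-continuous, and one can show that on the spectral subspace away from the gap the restriction of $g_i$ is norm-continuous (this is where one uses Halmos-type two-projection arguments or simply a Riesz-functional-calculus estimate as in the proof of Corollary \ref{C:homotopy}), so that $\chi(g_i(t))$ — being built only from the ``bad'' part of the spectrum plus genuine spectral projections onto the $\{0\}$ and $\{1\}$ clusters — comes out norm-continuous. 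Once that point is handled, the rest is the bookkeeping indicated above, and the theorem follows.
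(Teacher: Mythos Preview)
Your approach has a genuine gap at the ``uniform spectral gap'' step. You claim that on a compact $X_i$ continuity of $t\mapsto\|g_i(t)-g_i(t)^2\|$ yields a $\delta>0$ with $\sigma(g_i(t))\cap(\delta,1-\delta)=\emptyset$ for all $t$. This is false: compactness of $g_i(t)-g_i(t)^2$ only forces the spectrum of $g_i(t)$ to accumulate at $\{0,1\}$; isolated eigenvalues can sit anywhere in $[0,1]$, including at $1/2$. Boundedness of $\|g_i(t)-g_i(t)^2\|$ on a compact interval gives no gap at all (you would need $\|g_i(t)-g_i(t)^2\|<1/4$, which there is no reason to expect). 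Without a gap your functional calculus $\chi(g_i(t))$ need not be a projection, and even if you try to use the spectral projection $P_1(g_i(t))$ from Calkin's lemma pointwise, the cut between the ``$0$--cluster'' and the ``$1$--cluster'' can jump as $t$ varies, destroying strong continuity.

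The paper's proof avoids this precisely by \emph{not} fixing the partition in advance. It lifts $\mathbf f$ to a single $f\in M(I)$, and at each point $t_0$ it first perturbs the value $f(t_0)$ by a compact (using Calkin's lemma) so as to \emph{create} a genuine spectral gap; norm continuity of $f-f^2$ then propagates that gap to a neighborhood of $t_0$, on which the characteristic-function projection is well defined and strongly continuous. A finite cover of the compact part of $X$ by such neighborhoods is what \emph{produces} the partition points $x_i$, and near infinity the vanishing of $f-f^2$ supplies the gap directly. Your identification of the strong-versus-norm continuity issue is a secondary concern; the essential missing idea is this local creation of a gap and the resulting adaptive choice of partition.
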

 \begin{proof}
 Let $f$ be the element of $M(I)$ such that $\pi(f)=\mathbf{f}$.
 Without loss of generality, we can assume $f$ is self-adjoint and $0 \leq f \leq 1$.
 \begin{itemize}
 \item[(i)]  Suppose $X$ does not contain any infinite point.
 Choose a point $t_0 \in X $. Then there is a self-adjoint element $T \in B(H)$ such that $T-f(t_0) \in K$ and
 the spectrum of $T$ has a gap around $1/2$. So we consider $f(t)+T-f(t_0)$ which is still a self-adjoint operator whose image is $\mathbf{f}$.
 Thus we may assume $f(t_0)$ is a self-adjoint element whose spectrum has a gap around $1/2$.

 Since $r(f(t)):t \to f(t)-f(t)^{2}$ is norm continuous where $r(x)=x-x^2$,
 if we pick a point $z$ in $\left( 0,\frac{1}{4}\right)$ such that $z \notin \sigma(f(t_0)-f(t_0)^2)$,
 then  $\sigma(f(s))$ omits $r^{-1}(J)$  for $s$ sufficiently
 close to $t$ where $J$ is an interval containing $z$. In other
 words, there is  $\delta>0$ and $b> a >0 $ such that if $|t_0-s|<\delta $, then $ \sigma(f(s)) \subset
 [0, a)\cup (b,1]$. \\
  If we let $f_{t_0}(s)=\chi_{(b,1]}(f(s))$ for $s$ in $(t_0 - \delta,
  t_0+\delta)$ where $\chi_{(b,1]}$ is the characteristic function on $(b,1]$, then it is a continuous projection valued function such that
  $f_{t_0} - f \in C(t_0-\delta,t_0+\delta)\otimes K$.

  By repeating the above procedure, since $X$ is compact, we can find $n+1$ points $t_0,\cdots, t_n$, $n+1$ functions $f_{t_0}, \cdots,  f_{t_n}$,
  and an open covering $\{O_i\}$ such that $t_i \in O_i$, $O_i \cap O_{i-1} \ne
  \varnothing$, and  $f_{t_i}$ is projection valued function on $O_i$. Now
   let $f_i=f_{t_i}$ as above. Take the point $x_i \in O_{i-1} \cap O_i$ for
  $i=1, \cdots, n$. Then $f_{i}(x_i) - f_{i-1}(x_i)= f_{i}(x_i) - f(x_i) +
  f(x_i)- f_{i-1}(x_i) \in K$ and $f_0(x_0)-f_n(x_0) \in K$ if applicable. Let $X_{i}=[x_{i},x_{i+1}]$ for $i=1,
  \cdots, n-1$, $X_0=[0,x_1]$, and $X_n=[x_n,1]$. Since each $f_i$ is also
  defined on $X_{i}$, $(f_0, \cdots, f_n)$ is what we
  want.
  \item[(ii)] let $X$ be $[0, \infty)$.  Since $f^2(t) - f(t) \to 0 $ as $t$
  goes to $\infty$, for given $ \delta$ in $(0, 1/2) $, there is $M >0$ such that
  whenever $t \geq M$ then $\|f^2(t)-f(t) \| < \delta -\delta^2 $. It follows
  that $\sigma(f(t)) \subset [0, \delta) \cup (1-\delta,1]$ for $t \geq M$. Then
  again $P_1(f(t))$ is a continuous projection valued function for $t \geq M$
  such that $f(t) - E_1(f(t))$ vanishes as $t$ goes to $\infty$. . By applying
  the argument in $(i)$ to $[0,M]$, we get a closed subintervals $X_i$ for $i=0,
  \cdots, n-1$ of $[0,M]$ and $f_{i} \in C_{b}(X_{i}, B(H))$. Now if we let
  $X_n=[M, \infty]$ and $f_n(t) = P_1(f(t)) $, we are done.
  \item[(iii)]The case $X=(-\infty, \infty)$ is similar to (ii).
 \end{itemize}
 \end{proof}
Given a representation $(f_0, \cdots, f_n)$ of $\mathbf{f}$ in $\mathcal{C}(I)$, we
  can associate integers $k_i=[f_i(x_i):f_{i-1}(x_i)]$ for $i=1,\cdots, n$ and $k_0=[f_0(x_0):f_n(x_0)]$ in the circle case. The following proposition shows that these numbers are obstructions to global continuity of $(f_0, \cdots, f_n)$.
 \begin{prop}\label{P:lifting}
  If all $k_i$'s are equal to 0, then $\mathbf{f}$ is liftable to a projection in
  $M(I)$.
 \end{prop}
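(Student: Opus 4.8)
The plan is to produce a global projection in $M(I)$ by gluing the locally liftable pieces $f_0,\dots,f_n$ across the partition points, the vanishing of each $k_i$ being exactly what kills the obstruction at $x_i$. Beyond Theorem~\ref{T:homotopy} the only external fact I use is that $\mathcal{U}(1+K)$ is path-connected: any $U\in\mathcal{U}(1+K)$ is normal with $\sigma(U)\subset\mathbb{T}$ accumulating only at $1$, so Borel functional calculus yields a self-adjoint $h\in K$ with $e^{ih}=U$, whence $t\mapsto e^{ith}$ is a norm-continuous path of unitaries in $1+K$ from $1$ to $U$. (Equivalently, from $[f_i(x_i):f_{i-1}(x_i)]=0$ one reads off a norm-continuous path of projections from $f_{i-1}(x_i)$ to $f_i(x_i)$ through projections with compact difference, which could be used on the collars directly instead.)

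Assume first that $X$ is an interval. For each $i=1,\dots,n$, Theorem~\ref{T:homotopy} applied to the pair $f_{i-1}(x_i),f_i(x_i)$ (compact difference, index $k_i=0$) gives $U_i\in\mathcal{U}(1+K)$ with $U_if_{i-1}(x_i)U_i^{*}=f_i(x_i)$, and the previous paragraph gives a norm-continuous path $u_i\colon[x_i,x_i+\varepsilon_i]\to\mathcal{U}(1+K)$ with $u_i(x_i)=U_i$ and $u_i(x_i+\varepsilon_i)=1$, where $\varepsilon_i$ is small enough that these collars are pairwise disjoint and each lies in $X_i$. Define $g$ on $X$ by $g=f_0$ on $X_0$ and, on $X_i$ for $i\geq1$, $g(t)=u_i(t)^{*}f_i(t)u_i(t)$ for $t\in[x_i,x_i+\varepsilon_i]$ and $g(t)=f_i(t)$ otherwise. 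Then $g(x_i)=U_i^{*}f_i(x_i)U_i=f_{i-1}(x_i)$, which matches the value inherited from $X_{i-1}$ (the point $x_i$ lies in no collar of $X_{i-1}$), so $g$ is a well-defined projection-valued function. Since each $u_i$ is norm-continuous with $u_i(t)-1\in K$ and each $f_i\in C_b(X_i,B(H))$, the conjugate $u_i^{*}f_iu_i$ is again bounded and double-strong continuous, so $g\in C_b(X,B(H))=M(I)$; and on each collar $g(t)-f_i(t)=(u_i(t)^{*}-1)f_i(t)u_i(t)+f_i(t)(u_i(t)-1)$ is compact and norm-continuous in $t$, while $g-f_i=0$ off the collars, so $g-f_i\in C(X_i,K)$ and it vanishes near any infinite endpoint of $X_i$. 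Hence $\pi(g)=\mathbf{f}$, and $g$ is the desired lift.

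For $X=\mathbb{T}$ I would cut the circle at $x_0=0=1$ and view $(f_0,\dots,f_n)$ as data on an interval $[a,b]$ with interior partition points $x_1<\dots<x_n$, where $a$ and $b$ are two copies of $x_0$. The interval construction (which uses $k_1=\dots=k_n=0$) then produces a projection-valued $g$ on $[a,b]$ with $g=f_0$ near $a$ and $g=f_n$ near $b$, so $g(a)=f_0(x_0)$ and $g(b)=f_n(x_0)$. Because $[f_0(x_0):f_n(x_0)]=k_0=0$, Theorem~\ref{T:homotopy} gives $V\in\mathcal{U}(1+K)$ with $Vf_n(x_0)V^{*}=f_0(x_0)$; conjugating $f_n$ by a norm-continuous path from $V^{*}$ to $1$ on a small collar $[b-\varepsilon',b]\subset X_n$ disjoint from the earlier collar there replaces the value $g(b)$ by $f_0(x_0)=g(a)$ and changes nothing else. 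Now $g$ descends to a double-strong continuous projection-valued function on $\mathbb{T}$ with $g-f_i\in C(X_i,K)$ for every $i$, i.e.\ to a projection in $M(I)$ lifting $\mathbf{f}$.

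The point needing the most care is the circle case: a single-pass inductive gluing never closes up by itself, and one must deliberately spend the hypothesis $k_0=0$ at the seam. The other thing to watch is that the patched $g$ really lands in $M(I)$ and in the correct coset, even though the $f_i$ are only double-strong (not norm) continuous; this comes down to the routine observation that conjugating a bounded double-strong continuous $B(H)$-valued function by a norm-continuous unitary path of the form $1+\text{compact}$ preserves double-strong continuity and perturbs the function only by a norm-continuous $K$-valued function, which is an $\varepsilon/3$ estimate using that a compact operator times an $\ast$-strongly null bounded net is norm null. Everything else is bookkeeping about where the collars sit.
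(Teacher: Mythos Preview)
Your argument is correct and follows essentially the same route as the paper's own proof: both use Theorem~\ref{T:homotopy} to produce unitaries in $1+K$ conjugating $f_i(x_i)$ to $f_{i-1}(x_i)$, invoke the path-connectedness of $\mathcal{U}(1+K)$, and then conjugate along a path of unitaries so as to splice the pieces together, with the circle case handled by one extra gluing at the seam using $k_0=0$. The only cosmetic differences are that the paper runs the path of unitaries across all of $X_i$ (with value $1$ at $x_{i+1}$) rather than on a short collar, phrases the construction recursively, and is less explicit than you are about why $\mathcal{U}(1+K)$ is connected and why double-strong continuity and the coset in $\mathcal{C}(I)$ are preserved under such conjugation.
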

 \begin{proof}
 First we consider the case $X=[0,1]$, $[0,\infty]$, or $(-\infty,\infty)$.
 We recursively construct perturbations $f_1', \cdots f_n'$ of $f_1, \cdots, f_n$ so
 that $f_i'(x_i)=f_{i-1}'(x_i)$ for $i=1 \cdots n$ and  $f_n'$ agrees with
 $f_n$ in the some neighborhood of $\infty$ if the right end point of X is
 $\infty$.

  Observe that, if $k_i=[f_i(x_i):f_{i-1}(x_i)]=0$ for some $i$,
  we can find a
  unitary $U \in 1 + K$ which is path connected to 1 such that
  $Uf_{i}(x_i)U^*=f_{i-1}(x_i)$  by Theorem \ref{T:homotopy}. If we denote the (norm continuous) path of
  unitaries by $U(t)$ such that $U(x_i)=U$, $U(x_{i+1})=1$ and let  $f_{i}'(t)=U(t)
  f_{t}(t){U(t)}^{\ast}$ for each $t
  \in X_{i}$, then $f_{i}'-f_{i} \in C_b(X_{i},K)$ and
  $f_{i}'$ agree at $x_{i}$ with $f_{i-1}$.
  Furthermore, by Corollary \ref{C:homotopy} and Lemma \ref{L:unitaryequi}
  \begin{equation*}
  [f_{i+1}(x_{i+1}):f_{i}'(x_{i+1})] =
  [f_{i+1}(x_{i+1}):f_{i}(x_{i+1})].
  \end{equation*}
  Thus all $k_i$'s are equal to 0,
  we can perturb $f_i$'s for $i=1, \cdots, n$ inductively so that they will still be projection valued
  and will agree at $x_1, \cdots, x_{n}$ in the case $X=[0,1]$. If the right end point of $X$ is
  $\infty$, let $V$ be the unitary such that
  $Vf_{n}(x_{n})V^*=f_{n-1}'(x_n)$, we apply the same process to perturb $f_n$ with the path of
  unitaries defined by
  \begin{equation*}
  U(t)=
        \begin{cases}
         V  &\text{if $t=x_{n}$} \\
        1  &\text{if $t \geq  x_{n} + 1$}.
        \end{cases}
    \end{equation*}
If $X=\mathbb{T}$, we can recursively construct
$f'_1,f'_2,\cdots,f'_n,f'_0$ of $f_1,f_2,\cdots,f_n,f_0$
using the same argument as above since the perturbed map $f'_i$
agrees with $f_i$ at the end point of the interval $X_i$.
\end{proof}

  If $\mathbf{f}$ is liftable to a projection $g$ in $M(I)$,  we
 can use the same partition of $X$ so that $(g_0,\cdots,g_n)$ and $(f_0,
 \cdots, f_n)$ define the same element $\mathbf{f}$ where $g_i$ is the
 restriction of $g$ on $X_i$. Then, for each $i$, $[g_i(x):f_i(x)]$ is
 defined for all $x$. From Corollary \ref{C:homotopy}  this function must be constant on $X_i$
 since $g_i -f_i$ is norm continuous. So we can let $l_i= [g_i(x):f_i(x)]$. \\
 Since $g_i(x_i)=g_{i-1}(x_i)$, we have
 $[g_i(x_i):f_i(x_i)]+[f_i(x_i):f_{i-1}(x_i)]=[g_{i-1}(x_i):f_{i-1}(x_i)]$ by
 Proposition \ref{P:properties}-(3). In other words,
 \begin{equation}\label{E:eq1}
 l_i-l_{i-1}=-k_i \quad \mbox{for} \quad i>0 \quad \mbox{and}\quad l_0-l_n=-k_0 \quad \mbox{in the circle case}.
 \end{equation}
 Moreover, if we apply Proposition \ref{P:restrictions} and Lemma \ref{L:unitaryequi} to
  projections $g_i(x)$ and $f_i(x)$, then we have the following restrictions on
  $l_i$.
  \begin{itemize}
  \item[(i)] If for some $x$ in $X_i$, $f_i(x)$ has finite rank, then
             \begin{equation}\label{E:eq2}
         l_i \geq - \rank(f_i(x)),
         \end{equation}
  \item[(ii)] If  for some $x$ in $X_i$, $1-f_i(x)$ has finite rank, then
  \begin{equation}\label{E:eq3}
  l_i \leq \rank(1-f_i(x)),
  \end{equation}
  \item[(iii)] If either end point of $X_i$ is infinite, then
  \begin{equation}\label{E:eq4}
  l_i=0.
  \end{equation}
  \end{itemize}
   We claim that these necessary conditions are also sufficient. To show this, we need a well--known identification  of a strongly continuous projection valued function on a topological space with a continuous field of Hilbert spaces \cite[252--253]{dixdou}: Given a separable Hilbert space $H$, there is a one to one
correspondence between complemented subfields $\mathcal{H}=((H_x)_{x \in X},\Gamma)$ of the
constant filed defined by $H$ over a paracompact space $T$ and strongly continuous
projection valued functions $p:X \mapsto B(H)$ since $H_x$ is given by $p(x)H$ and vice versa.  Thus two
continuous fields of Hilbert spaces defined by $p$ and $p'$ are
isomorphic if and only if there is a double strongly continuous
valued function $u$ on $T$ such that $uu^*=p'$ and $u^*u=p$ (for the notion of a continuous field of Hilbert spaces, see Section \ref{S:Appendix}). 

The following lemma plays a key role in the proof of Theorem \ref{T:liftingthm}. However, its proof is rather long, so we include the proof in Appendix (see Corollary \ref{C:subprojection}).
  \begin{lem}\label{L:subprojection}
If $X$ is a separable metric space such that $\dim(X) \leq 1$ and
$\mathcal{H}$ is a continuous field of Hilbert spaces over $X$ such
that $\dim (H_x) \geq n $ for every $x \in X$, then $\mathcal{H}$
has a trivial subfield of rank n. Equivalently, if $p$ is a strongly
continuous projection valued function on $X$ such that
$\rank(p(x))\geq n$ for every $x \in X$, then there is a norm
continuous projection valued function $q$ such that $q \leq p$ and
$\rank(q(x))=n$ for every $x \in X$.
\end{lem}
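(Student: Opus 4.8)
The plan is to reduce everything to the construction of a single norm continuous unit section and then peel off rank-one trivial subfields one at a time. Suppose, for the moment, that we know the special case: every strongly continuous projection valued function $r$ on $X$ with $\rank(r(x))\ge 1$ for all $x$ admits a norm continuous section of unit vectors, i.e.\ a norm continuous $v\colon X\to H$ with $\|v(x)\|=1$ and $r(x)v(x)=v(x)$. Applying this to $p$ gives $v_1$; put $e_1:=v_1v_1^{*}\le p$. Then $p-e_1$ is again strongly continuous and projection valued with $\rank((p-e_1)(x))=\rank(p(x))-1\ge n-1$, so applying the special case to $p-e_1$, then to $p-e_1-e_2$, and so on, produces $v_1,\dots,v_n$ after $n$ steps. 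A one-line computation using $e_1p=e_1$ gives $v_1^{*}v_2=0$, and inductively the $e_i:=v_iv_i^{*}$ are pairwise orthogonal subprojections of $p$; hence $q:=\sum_{i=1}^{n}e_i$ is a norm continuous projection with $q\le p$ and $\rank(q(x))=n$ for every $x$, as required.

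To prove the special case I would first produce \emph{local} unit sections. Given $x_0\in X$, since $p(x_0)\ne 0$ choose a unit vector $\xi$ with $p(x_0)\xi\ne 0$; by strong continuity $x\mapsto p(x)\xi$ is norm continuous, hence nonzero on a neighbourhood $U_{x_0}$ of $x_0$, and $x\mapsto p(x)\xi/\|p(x)\xi\|$ is a norm continuous unit section of $p$ over $U_{x_0}$. This gives an open cover of $X$ carrying local norm continuous unit sections; the issue is to glue them into a global one, and this is the only place $\dim X\le 1$ enters.

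I would package the gluing as a selection problem. Let $\Phi(x)$ be the unit sphere of the Hilbert space $p(x)H$. Each $\Phi(x)$ is a complete metric space which is path connected (it is the unit sphere of a complex Hilbert space of dimension $\ge 1$), the family $\{\Phi(x)\}_{x}$ is equi-locally path connected (nearby points of any unit sphere are joined by short great-circle arcs, with bounds independent of $x$), and $\Phi$ is lower semicontinuous (for $v_0\in\Phi(x_0)$ and $x$ near $x_0$, the vector $p(x)v_0/\|p(x)v_0\|$ lies in $\Phi(x)$ and is close to $v_0$). Since $X$ is separable metric with covering dimension $\le 1$, hence paracompact of dimension $\le 1$, Michael's finite-dimensional selection theorem in the case $n=0$ then supplies a continuous selection $v\colon X\to H$ with $v(x)\in\Phi(x)$, i.e.\ a global norm continuous unit section. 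Alternatively one can do this by hand: refine the cover to a countable locally finite open cover of order $\le 2$ (possible since $\dim X\le 1$) with local sections $v_i$, pass to a closed shrinking $\{F_i\}$, and extend the section over $F_1\cup\cdots\cup F_k$ one $F_{k+1}$ at a time, each step being an extension from a closed subset into a path connected fibre, with obstructions lying in relative \v{C}ech groups $\check H^{m}(X,A;\,\cdot\,)$ for $m\ge 2$, which vanish because $\dim X\le 1$.

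The main obstacle, and the reason this is long enough to be relegated to the Appendix, is executing that last step cleanly and \emph{uniformly}: the fibres $p(x)H$ may have varying, possibly infinite, dimension, so the continuous field need not be locally trivial and one cannot simply invoke vector-bundle splitting theorems; one has to verify the lower-semicontinuity and equi-connectedness hypotheses (or, in the hands-on version, control the inductive extension) with constants independent of the fibre. Everything else — the reduction in the first paragraph and the local sections of the second — is routine.
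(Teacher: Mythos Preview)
Your reduction to the rank-one case and the iterative peeling argument are correct and match the spirit of the paper, but the overall architecture differs. The paper does not stop at existence of a single non-vanishing section: it proves the stronger density statement that \emph{every} continuous section can be $\epsilon$-approximated by a non-vanishing one (this is Lemma~\ref{L:sectionexist}), then uses that density to build an exhaustive orthonormal system $\{e_n\}$ and conclude that any two separable continuous fields over $X$ with the same dimension function are isomorphic (Proposition~\ref{P:isomorphic}). The subfield lemma is then a one-line corollary: build a field $\mathcal{H}_0\oplus K$ with $\mathcal{H}_0$ trivial of rank $n$ and $\dim K_x=\dim H_x-n$, and transport its obvious trivial subfield through the isomorphism $\mathcal{H}\cong\mathcal{H}_0\oplus K$. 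Your route is more economical for the lemma as stated --- you only need existence of one non-vanishing section, not density, so you never need the sections to exhaust the field --- and your invocation of Michael's finite-dimensional selection theorem (with the $LC^0$ and lower-semicontinuity checks you indicate) is a legitimate shortcut. The paper's hands-on construction of the non-vanishing section is essentially your ``alternative'' sketch carried out in detail, via a locally finite closed refinement and a careful two-stage extension on each piece; what it buys is the full classification theorem for one-dimensional base spaces, which your argument does not yield.
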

\begin{thm}\label{T:liftingthm}
  A projection $\mathbf{f}$ in $\mathcal{C}(I)$ represented by
  $(f_0,\cdots, f_n)$ is liftable to a projection in $M(I)$ if and only if there
  exist $l_0,\cdots,l_n$ satisfying above conditions
  (\ref{E:eq1}), (\ref{E:eq2}), (\ref{E:eq3}), (\ref{E:eq4}).
  \end{thm}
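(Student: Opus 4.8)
The plan is to prove necessity first (which is essentially already done in the discussion preceding the statement) and then concentrate on sufficiency.  For necessity, suppose $\mathbf{f}$ lifts to a projection $g\in M(I)$.  Using the \emph{same} partition, write $g=(g_0,\dots,g_n)$ with $g_i=g|_{X_i}$, and set $l_i=[g_i(x):f_i(x)]$; by Corollary~\ref{C:homotopy} this is independent of $x\in X_i$ since $g_i-f_i$ is norm continuous, and relations (\ref{E:eq1})--(\ref{E:eq4}) follow from Proposition~\ref{P:properties}(3), Proposition~\ref{P:restrictions}, and Lemma~\ref{L:unitaryequi} exactly as indicated in the paragraph above the theorem.

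For sufficiency, assume integers $l_0,\dots,l_n$ satisfying (\ref{E:eq1})--(\ref{E:eq4}) are given.  The strategy is to replace each $f_i$ on $X_i$ by a norm-homotopic projection-valued function $f_i'$ (so that $\pi(f_0',\dots,f_n')=\mathbf{f}$ still) in such a way that $[f_i'(x):f_i(x)]=l_i$ for all $x$ and, crucially, $f_i'(x_i)=f_{i-1}'(x_i)$ at every partition point; then $(f_0',\dots,f_n')$ patches to a genuine projection in $M(I)$ and we are done, much as in Proposition~\ref{P:lifting}, which is the special case $l_i\equiv 0$.  First I would reduce to arranging the matching condition at the $x_i$: once $[f_i'(x_i):f_{i-1}'(x_i)]=l_i-l_{i-1}+k_i=0$ (using (\ref{E:eq1})), Theorem~\ref{T:homotopy} supplies a unitary in $1+K$, path-connected to $1$, conjugating one onto the other, and the homotopy argument of Proposition~\ref{P:lifting} lets us absorb that conjugation into $f_i'$ on $X_i$ without changing anything near an infinite endpoint (where (\ref{E:eq4}) forces $l_i=0$, so no modification is needed there).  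So everything comes down to: given a strongly continuous projection-valued $f_i$ on the interval $X_i$ and an integer $l_i$ subject to (\ref{E:eq2})--(\ref{E:eq3}), produce a norm-continuous projection-valued $g_i$ on $X_i$ with $g_i-f_i\in C_b(X_i,K)$ and $[g_i(x):f_i(x)]=l_i$ pointwise.

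This last construction is the heart of the matter and the step I expect to be the main obstacle; it is exactly where Lemma~\ref{L:subprojection} is needed.  If $l_i\ge 0$: I want $g_i\ge f_i$ with $\rank\bigl(g_i(x)-f_i(x)\bigr)=l_i$.  Pass to the complementary field $1-f_i$, which is a strongly continuous projection-valued function on the interval $X_i$ (a separable metric space of covering dimension $\le 1$); condition (\ref{E:eq3}), $l_i\le\rank(1-f_i(x))$, says its rank is everywhere $\ge l_i$, so Lemma~\ref{L:subprojection} yields a norm-continuous projection $q\le 1-f_i$ of constant rank $l_i$ — but wait, if $1-f_i(x)$ is infinite-rank at \emph{every} $x$ there is no constraint, and if it drops to finite rank somewhere, (\ref{E:eq3}) is exactly the hypothesis making Lemma~\ref{L:subprojection} applicable; set $g_i=f_i+q$, which is norm-continuous precisely because $q$ is and $g_i-f_i=q$ has constant finite rank $l_i$, whence $[g_i(x):f_i(x)]=l_i$ by Proposition~\ref{P:properties}(1).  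If $l_i<0$: apply the same argument to $f_i$ itself, using (\ref{E:eq2}) to get $\rank(f_i(x))\ge -l_i=|l_i|$, obtain a norm-continuous $q\le f_i$ of constant rank $|l_i|$, and put $g_i=f_i-q$.  One must also check the patching is compatible: after the matching-unitary conjugation at the endpoints these $g_i$ can simultaneously be arranged to agree at each $x_i$ (conjugating $q$ by the same path of unitaries), and the infinite-endpoint clauses are untouched because there $l_i=0$ and $g_i=f_i$; then $(g_0,\dots,g_n)$ defines the desired lift.
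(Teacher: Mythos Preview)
Your proof is correct and follows essentially the same approach as the paper: construct $g_i$ with $[g_i(x):f_i(x)]=l_i$ by adding or subtracting a norm-continuous rank-$|l_i|$ subprojection supplied by Lemma~\ref{L:subprojection} (using (\ref{E:eq2}) or (\ref{E:eq3}) to verify its hypothesis), observe via (\ref{E:eq1}) that the new discontinuity numbers $[g_i(x_i):g_{i-1}(x_i)]$ all vanish, and then invoke Proposition~\ref{P:lifting}. One minor slip: $g_i=f_i+q$ is only \emph{strongly} continuous, not norm-continuous as you write (since $f_i$ is only strongly continuous); what matters is that $g_i-f_i=q$ is norm-continuous and compact-valued, so $(g_0,\dots,g_n)$ still represents $\mathbf{f}$ in $\mathcal{C}(I)$.
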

 \begin{proof}
  Given $l_i$'s satisfying (\ref{E:eq1}), (\ref{E:eq2}), (\ref{E:eq3}),
  (\ref{E:eq4}),
  we will show there exist $g_0, \cdots, g_n$
  such that $[g_i(x_i):g_{i-1}(x_i)]=0$ for $i>0$ and $[g_0(x_0):g_n(x_0)]=0$ in the circle case.

  First observe that if we have $g_{i}$'s such that $l_i=[g_i(x_i):f_i(x_i)]$,
  we have $[g_i(x_i):g_{i-1}(x_i)]=0 $ by (\ref{E:eq1}). Thus it is enough to
  show that there exist $g_0, \cdots, g_n$
  such that $[g_i(x_i):f_{i}(x_i)]=l_i$.
  \begin{itemize}
  \item[$l_i=0$]: Take $g_{i}=f_{i}$.
  \item[$l_i>0$]: By Lemma \ref{L:subprojection} the continuous field determined by $1-f_{i}$ has a
  rank $ l_i$ trivial subfield which is given by a projection valued function
  $q \leq 1-f_i$. So we take $g_i=f_i+q$.
  \item[$l_i<0$]: Similarly, the continuous field determined by $f_i$ has a rank
  $-l_i$ trivial subfield which is given by a projection valued function $q' \leq
  f_i$. So we take $g_i=f_i-q'$.
 \end{itemize}
  Then the conclusion follows from Proposition \ref{P:lifting}.
  \end{proof}

 \section{Homotopy classification of projections}\label{Homotopy}
 Applying the representation in section \ref{Lifting} to the $n\times n$ matrix algebra over $I$, we can
represent an
   element $\mathbf{f}$ in $M_{n}(\mathcal{C}(I))$ as $(f_1,f_2,\cdots, f_n)$
   where $f_{i}$'s are in $M_{n}(M(I))=M(M_{n}(I))$ and each of them is
   projection valued: $f_i(x)$ is in $M_n(B(H)) \simeq B(H^n)) \simeq B(H)$. In
    addition, since $f_i(x_i)-f_{i-1}(x_i) \in M_{n}(K) \simeq
    K$, we can also associate the integers $[f_i(x_i):f_{i-1}(x_i)]$
    for $i-1, \cdots, n$ by suppressing the identification.
   \begin{lem}\label{L:welldefine1}
     If $\mathbf{f} \in \mathcal{C}(I)$  where $I=C(-\infty,\infty)\otimes
   K$ has two  different local liftings $(f_1, \cdots f_n)$ and
    $(g_1,\cdots, g_n)$, then $\sum [f_i(x_i):f_{i-1}(x_i)]= \sum
    [g_i(x_i):g_{i-1}(x_i)]$
   \end{lem}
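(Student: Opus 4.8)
The plan is to show that the number $\sum_i [f_i(x_i):f_{i-1}(x_i)]$ is unchanged first under refinement of the partition, and then between two local liftings over one fixed common partition; the fact that $(-\infty,\infty)$ has \emph{two} infinite ends is what makes the resulting telescoping sum collapse.

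First I would reduce to the case that the two local liftings are defined over the same partition. Given local liftings of $\mathbf f$ over partitions $P$ and $P'$, pass to the common refinement $P\cup P'$: restricting the functions of each lifting to the subintervals of the finer partition yields a local lifting of $\mathbf f$ over $P\cup P'$ by the compatibility description in Section \ref{Lifting}. Inserting a single new partition point $y$ in the interior of a subinterval $X_i$ replaces $f_i$ locally by the pair $f_i|_{[x_i,y]},\,f_i|_{[y,\,x_{i+1}]}$, and the new essential codimension created at $y$ is $[f_i(y):f_i(y)]=0$ by Proposition \ref{P:properties}; all other terms are unaffected, so the total sum is invariant under refinement. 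Hence it suffices to compare two local liftings $(f_i)$ and $(g_i)$ of $\mathbf f$ over one fixed partition, with subintervals $X_1,\dots,X_m$, where $X_1$ and $X_m$ are the unbounded ends.

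Because $(f_i)$ and $(g_i)$ represent the same element $\mathbf f$, we have $f_i-g_i\in C_b(X_i,K)$ for each $i$; in particular $g_i(x)-f_i(x)\in K$ for every $x\in X_i$, so $[g_i(x):f_i(x)]$ is defined, and by Corollary \ref{C:homotopy} it is a constant $l_i$ on the connected set $X_i$. On the unbounded ends the stronger vanishing conditions $f_1-g_1\in C_0((-\infty,x_2],K)$ and $f_m-g_m\in C_0([x_m,\infty),K)$ hold, so $\|f_1(x)-g_1(x)\|<1$ for $x$ near $-\infty$ and $\|f_m(x)-g_m(x)\|<1$ for $x$ near $+\infty$; Lemma \ref{L:unitaryequi} then forces $l_1=l_m=0$.

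Finally, at each interior partition point $x_i$ the three differences $g_i(x_i)-f_i(x_i)$, $f_i(x_i)-f_{i-1}(x_i)$, $f_{i-1}(x_i)-g_{i-1}(x_i)$ all lie in $K$, and since $g_i(x_i)=g_{i-1}(x_i)$, parts (2) and (3) of Proposition \ref{P:properties} give
$$[g_i(x_i):g_{i-1}(x_i)] = l_i + [f_i(x_i):f_{i-1}(x_i)] - l_{i-1}.$$
Summing over the interior points telescopes the $l$-terms to $l_m-l_1=0$, whence $\sum_i[g_i(x_i):g_{i-1}(x_i)]=\sum_i[f_i(x_i):f_{i-1}(x_i)]$, as claimed. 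The only genuinely delicate step is the reduction to a common partition; the rest is a direct application of the additivity and homotopy-invariance of $[\,\cdot:\cdot\,]$, and the argument really does use both infinite ends of $(-\infty,\infty)$ — for $[0,1]$ or $[0,\infty)$ the telescoped boundary term need not vanish, which is exactly why the sum fails to be well defined there.
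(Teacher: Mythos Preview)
Your argument is correct and follows the paper's own telescoping strategy: rewrite $[g_i(x_i):g_{i-1}(x_i)]-[f_i(x_i):f_{i-1}(x_i)]$ via Proposition~\ref{P:properties}(3), sum, and kill the boundary terms using the vanishing at both infinite ends (the paper cites Theorem~\ref{T:homotopy} for this last step, but your appeal to Corollary~\ref{C:homotopy} plus Lemma~\ref{L:unitaryequi} is the cleaner justification). One slip to fix: you write ``since $g_i(x_i)=g_{i-1}(x_i)$'', but only $g_i(x_i)-g_{i-1}(x_i)\in K$ holds in general; fortunately your displayed identity follows from Proposition~\ref{P:properties}(2),(3) alone and does not need that equality, so the proof stands. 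Your preliminary reduction to a common partition is an addition not present in the paper (which tacitly assumes the same partition for both liftings), and it is handled correctly.
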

   \begin{proof}
    Note that
    \[
     [g_i(x_i):f_i(x_i)]+[f_i(x_i):f_{i-1}(x_i)]+[f_{i-1}(x_i):g_{i-1}(x_i)]
 =[g_i(x_i):g_{i-1}(x_i)].
  \]
  Equivalently,
  \[
       [f_i(x_i):f_{i-1}(x_i)]-[g_i(x_i):g_{i-1}(x_i)]=
       [f_i(x_i):g_i(x_i)]-[f_{i-1}(x_i):g_{i-1}(x_i)].
    \]
    Hence $$\sum [f_i(x_i):f_{i-1}(x_i)] - \sum
    [g_i(x_i):g_{i-1}(x_i)]=[f_n(x_n):g_n(x_n)] -[f_0(x_0):g_0(x_0)]. $$
    Since
    $f_n- g_n \in C_{0}(x_{n},\infty) \otimes K$ and
    $f_0- g_0 \in C_{0}(-\infty,x_{0}) \otimes K$,
    $[f_n(x_n):g_n(x_n)]=[f_0(x_0):g_0(x_0)]=0$ by Theorem \ref{T:homotopy}.
    Consequently, we have  $$\sum [f_i(x_i):f_{i-1}(x_i)]= \sum
    [g_i(x_i):g_{i-1}(x_i)]. $$
    \end{proof}

    Given two projections $\mathbf{p}, \mathbf{q}$ in $\mathcal{C}(I)$ let's assume there is a partial isometry $\pi(u)$ in $\mathcal{C}(I)$
    such that $\pi(u)^*\pi(u)=\mathbf{p}$ and
    $\pi(u)\pi(u)^*=\mathbf{q}$ for some $u$ in $M(I)$. If we take $(p_0, \cdots,
    p_n)$ and $(q_0, \cdots, q_n)$ as local liftings of $\mathbf{p}$
    and $\mathbf{q}$ respectively, using the same partition, we can
    have a representation of $u$ as $(u_0,\cdots,u_n)$. In fact,
    $u_i=q_iu|_{X_i}p_i$. Hence we
    have
    \[
     q_i - u_i{u_{i}}^{*},\quad p_i -{u_{i}}^{*}u_i  \in C(X_i)\otimes K.
    \]

    For any $x \in X_i$, we can view $u_i(x)$ as a Fredholm operator
    from $p_i(x)H$ to $q_i(x)H$, and thus we can define the
    Fredholm  index for $u_i(x)$ for each point $x$ in $X_i$.

   \begin{lem}\label{L:const}
    $\Ind(u_i(x))$ is constant on $X_i$.
   \end{lem}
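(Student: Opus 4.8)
The plan is to show that $x\mapsto\Ind(u_i(x))$ is locally constant on the interval $X_i$; since $X_i$ is connected, this forces it to be constant. The key point is that the index of a Fredholm operator is stable under small norm perturbations and under compact perturbations, so I want to reduce the variation of $u_i(x)$ with $x$ to precisely such perturbations. Fix $x_0\in X_i$. I will compare $u_i(x)$ with $u_i(x_0)$ for $x$ near $x_0$, but the subtlety is that these operators act between \emph{different} subspaces, namely from $p_i(x)H$ to $q_i(x)H$ versus from $p_i(x_0)H$ to $q_i(x_0)H$, so one cannot directly take the difference.

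To handle this, I would use the norm continuity of $p_i$ and $q_i$ on $X_i$ (they are norm-continuous projection-valued functions, being the local liftings of a projection together with the constancy statements proved earlier). Choose a neighborhood of $x_0$ small enough that $\|p_i(x)-p_i(x_0)\|<1$ and $\|q_i(x)-q_i(x_0)\|<1$. Then by a standard argument (as in the proof of Lemma \ref{L:unitaryequi}) there are unitaries $a(x)=a_p(x)$ and $b(x)=b_q(x)$ in $1+K$, depending norm-continuously on $x$ with value $1$ at $x_0$, such that $a(x)p_i(x_0)a(x)^*=p_i(x)$ and $b(x)q_i(x_0)q_i(x_0)^*$... more precisely $b(x)q_i(x)b(x)^*=q_i(x_0)$. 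Conjugating, the operator $b(x)\,u_i(x)\,a(x)$ is a Fredholm operator from $p_i(x_0)H$ to $q_i(x_0)H$ with the same index as $u_i(x)$ (conjugation by unitaries preserves the index), so it suffices to show $x\mapsto b(x)u_i(x)a(x)$ has constant index as an operator on the fixed spaces $p_i(x_0)H$, $q_i(x_0)H$. But now $b(x)u_i(x)a(x)$ is a norm-continuous path of Fredholm operators between two fixed Hilbert spaces (using norm continuity of $u_i$ on $X_i$, which holds since $u_i=q_i\,u|_{X_i}\,p_i$ with $u\in M(I)=C_b(X,B(H))$ restricted appropriately and $p_i,q_i$ norm continuous—one should note $u_i$ is norm continuous on the interval), so its index is locally constant, hence constant near $x_0$.

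Thus $\Ind(u_i(\cdot))$ is locally constant on $X_i$, and since $X_i$ is a connected interval it is constant, which is what we wanted. The main obstacle I anticipate is the bookkeeping needed to make the comparison between operators on varying subspaces rigorous: constructing the norm-continuous families of unitaries $a(x),b(x)\in 1+K$ implementing the small perturbations of the projections, and confirming that conjugation by them preserves the Fredholm index between the relevant subspaces. Once that identification is in place, the stability of the Fredholm index under norm-continuous perturbation finishes the argument immediately; alternatively one can avoid the unitaries entirely by invoking that $\Ind(u_i(x))$ equals an essential-codimension expression—for instance using Proposition \ref{P:properties} to write it in terms of $[\,\cdot\,:\,\cdot\,]$ of relevant projections—and then appealing to Corollary \ref{C:homotopy} for constancy, which may be the cleanest route if the relation between the Fredholm index of $u_i(x)$ and essential codimensions of projections built from $u_i(x)$ is set up carefully.
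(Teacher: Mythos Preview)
Your argument has a genuine gap: you assume that $p_i$, $q_i$, and $u_i$ are norm continuous on $X_i$, but they are not. Recall that $M(I)=C_b(X,B(H))$ with the \emph{double-strong} topology, so the local liftings $p_i,q_i$ are only strongly continuous projection-valued functions, and $u_i=q_i\,u|_{X_i}\,p_i$ is likewise only strongly continuous. (Indeed, the examples at the end of Section~\ref{Homotopy} have $\rank p_i(x)$ jumping, which is impossible for a norm-continuous family of projections.) Consequently you cannot guarantee $\|p_i(x)-p_i(x_0)\|<1$ for $x$ near $x_0$, the unitaries $a(x),b(x)\in 1+K$ need not exist, and the claim that $b(x)u_i(x)a(x)$ is a norm-continuous path of Fredholm operators between fixed spaces collapses.

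What \emph{is} norm continuous---and this is what the paper exploits---are the compact-valued differences $p_i-u_i^*u_i$ and $q_i-u_iu_i^*$, since these lie in $C(X_i)\otimes K$. The paper identifies $\ker u_i(x)=E_1(p_i(x)-u_i(x)^*u_i(x))$ and $\ker u_i(x)^*=E_1(q_i(x)-u_i(x)u_i(x)^*)$, then uses the Riesz functional calculus (as in the proof of Corollary~\ref{C:homotopy}) together with the symmetry $\dim E_\lambda(u_i^*u_i)=\dim E_\lambda(u_iu_i^*)$ for $\lambda>0$ to show that the index is locally constant. Your closing remark about rewriting $\Ind(u_i(x))$ as an essential codimension and citing Corollary~\ref{C:homotopy} is in the right spirit, but note that $u_i(x)^*u_i(x)$ is not a projection, so you would still need to pass through the spectral argument the paper carries out.
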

    \begin{proof}
    Consider either end point of $X_i$ is not infinite. Fix a point $x_0$ and
   observe that $\ker(u_i(x_0))= E_{1}(p_i(x_0)-u_i(x_0)^*u_i(x_0))$. Similarly,
   $\ker(u_i(x_0)^*)=E_{1}(q_i(x_0)-u_i(x_0)u_i(x_0)^*)$.\\
    Note that $1$ is an isolated
   point in the spectrum of $p_i(x_0)-u_i(x_0)^*u_i(x_0)$, and therefore we can
   consider a neighborhood $\{1\}\cup [0, 1-\epsilon)$ for some $\epsilon >0$.
   Since $p_i-u_i^*u_i,
    q_i-u_iu_i^*$ is norm-continuous, there is a $\delta $ such that
    if $|x_0-x | < \delta$ then $\sigma(p_i(x)-u_i(x)^*u_i(x)) \subset
    O_{1}(\epsilon) \cup [0, 1-\epsilon) $ of
    $\sigma(p_i-u_i^*u_i)$ where $O_{1}(\epsilon)$ is the open ball
    centered at 1 with radius $\epsilon$. By the Riesz functional calculus (see the proof of Corollary
    \ref{C:homotopy}), $$\dim (E_{1}(p_i(x_0)-u_i(x_0)^*u_i(x_0)))=
    \sum_{0 \leq \lambda_j < \epsilon}
    \dim(E_{1-\lambda_j}(p_i(x)-u_i(x)^*u_i(x))).$$
    Similarly,$$\dim (E_{1}(q_i(x_0)-u_i(x_0)u_i(x_0)^*))=
    \sum_{0 \leq \nu_j < \epsilon}
    \dim(E_{1-\nu_j}(q_i(x)-u_i(x)u_i(x)^*)).$$
    Since
    $\dim E_{1-\lambda_i}(p_i(x)-u_i(x)^*u(x))=
    \dim E_{\lambda_i}(u_i(x)^*u_i(x))=
\dim E_{\lambda_i}(u_i(x)u_i(x)^*) $ $=
E_{1-\lambda_i}(q_i(x)-u_i(x)u(x)^*)$ for $\lambda_i>0$,
    $$\sum_{0 < \lambda_j < \epsilon}\dim(E_{1-\lambda_j}(p_i(x)-u_i(x)^*u_i(x)))=
    \sum_{0 < \nu_j <
    \epsilon}\dim(E_{1-\nu_j}(q_i(x)-u_i(x)u_i(x)^*)).$$
    Thus, we have
    \begin{align*}
    \Ind(u_i(x_0))&=\dim
    (E_{1}(p_i(x_0)-u_i(x_0)^*u_i(x_0)))-\dim
    (E_{1}(q_i(x_0)-u_i(x_0)u_i(x_0)^*)) \\
    &=\dim
    (E_{1}(p_i(x)-u_i(x)^*u_i(x)))-\dim
    (E_{1}(q_i(x)-u_i(x)u_i(x)^*)) \\
    &=\Ind(u_i(x)) \quad \mbox{for} \quad |x-x_0 | < \delta.
    \end{align*}
    Finally, the claim follows since each $X_i$ is connected.\\
    \end{proof}

    We will denote the index of $u_i$ on $X_i$ by $t_i$.
    If $u_i(x_i)=v_i|u_i(x_i)|$ is a polar decomposition of $u_i(x_i)$ in
    $B(H)$ and $u_{i-1}(x_i)=v_{i-1}|u_{i-1}(x_i)|$ is a polar
    decomposition of $u_{i-1}(x_i)$,
    then $$t_i=[p_i(x_i):v_i^*v_i]-[q_i(x_i):v_iv_i^*],$$
        $$t_{i-1}=[p_{i-1}(x_i):v_{i-1}^*v_{i-1}]-[q_{i-1}(x_i):v_{i-1}v_{i-1}^*].$$
    Since $p_{i}(x_i)-p_{i-1}(x_i), q_{i}(x_i)-q_{i-1}(x_i) \in K$, we
    have $u_i(x_i)-u_{i-1}(x_i) \in K$. Thus we can deduce that
      \[v_i -v_{i-1} \in K, \,\,
     v_i^*v_i-v_{i-1}^*v_{i-1},
     v_{i}v_{i}^*-v_{i-1}v_{i-1}^* \in K.\]

     From
    \begin{align*}
    [p_i(x_i):p_{i-1}(x_i)]&=
    [p_i(x_i):v_i^*v_i]+[v_i^*v_i:v_{i-1}^*v_{i-1}]+[v_{i-1}^*v_{i-1}:p_{i-1}(x_{i})],\\
    [q_i(x_i):q_{i-1}(x_i)]&=[q_i(x_i):v_iv_i^*]+[v_iv_i^*:v_{i-1}v_{i-1}^*]+[v_{i-1}v_{i-1}^*:q_{i-1}(x_{i})],
    \end{align*}
    we  have
    \[
       [p_i(x_i):p_{i-1}(x_i)]-[q_i(x_i):q_{i-1}(x_i)]=t_i-t_{i-1}+[v_i^*v_i:v_{i-1}^*v_{i-1}]-[v_iv_i^*:v_{i-1}v_{i-1}^*].
    \]
    Let $W, V$ be isometries such that $WW^*=v_i^*v_i,
     VV^*=v_{i-1}^*v_{i-1}$. Then $V'=v_{i-1}V, W'=v_iW$ are isometries such that
     $V'{V'}^{\ast}=v_{i-1}{v_{i-1}}^*,
     W'{W'}^{\ast}=v_iv_i^*$.
     \begin{align*}
     [v_iv_i^*:v_{i-1}v_{i-1}^*]&=\Ind({V'}^{\ast}W')\\
                                 &=\Ind(V^*v^*_{i-1}v_{i}W)\\
                                 &=\Ind(V^*{v^*_i}v_iW)  \quad  (v_{i-1} -v_i \in
                 K)\\
                                 &=\Ind(V^*WW^*W)\\
                                 &=\Ind(V^*W)\\
                                 &=[v^{\ast}_iv_i:{v^*_{i-1}}v_{i-1}].
     \end{align*}
     Thus, if we let $k_i=[p_i(x_i):p_{i-1}(x_i)]$,
     $l_i=[q_i(x_i):q_{i-1}(x_i)]$, then we have
     \begin{equation}\label{E:vonmurray}
      t_i -t_{i-1}= k_i-l_i.
     \end{equation}

     \begin{lem}\label{L:welldefine2}
    Suppose there is a partial isometry $\pi(u)$ in $\mathcal{C}(I)$ where $I=C(-\infty,\infty)\otimes
   K$ or $C(\mb{T})\otimes K$ such that $\pi(u)^*\pi(u)=\mathbf{p}$ and
    $\pi(u)\pi(u)^*=\mathbf{q}$ for some $u$ in $M(I)$. If $(p_0, \cdots,
    p_n)$ and $(q_0, \cdots, q_n)$ are local liftings of $\mathbf{p}$
    and $\mathbf{q}$ respectively, then $ \sum_{i=1}^n[p_i(x_i):p_{i-1}(x_i)]=\sum_{i=1}^n
    [q_i(x_i):q_{i-1}(x_i)]$, or $\sum_{i=1}^n[p_i(x_i):p_{i-1}(x_i)]+ [p_0(x_0):p_n(x_0)]=\sum_{i=1}^n
    [q_i(x_i):q_{i-1}(x_i)]+[q_0(x_0):q_n(x_0)]$ in the circle case.
     \end{lem}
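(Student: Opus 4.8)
The plan is to deduce everything from relation (\ref{E:vonmurray}), namely $t_i-t_{i-1}=k_i-l_i$, where $u_i=q_iu|_{X_i}p_i$, $t_i=\Ind(u_i(x))$ is the constant value furnished by Lemma \ref{L:const}, $k_i=[p_i(x_i):p_{i-1}(x_i)]$ and $l_i=[q_i(x_i):q_{i-1}(x_i)]$. Summing (\ref{E:vonmurray}) over $i=1,\dots,n$ telescopes to $\sum_{i=1}^n(k_i-l_i)=t_n-t_0$. Thus in the real line case it suffices to show that the two ``boundary'' indices $t_0$ and $t_n$ vanish, while in the circle case one needs one additional relation linking $t_0$ to $t_n$.

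For $X=(-\infty,\infty)$ I would first prove $t_0=0$ by working on $X_0$, the interval carrying $-\infty$. Since $(p_0,\dots,p_n)$ and $(q_0,\dots,q_n)$ are local liftings of $\mathbf p$ and $\mathbf q$, the functions $p-p_0$ and $q-q_0$ lie in $C(X_0,K)$ and vanish in norm at $-\infty$; moreover $u^*u-p\in I$, $uu^*-q\in I$, and, because $\pi(u)$ is a partial isometry, $uu^*u-u\in I$. Substituting these into $u_0^*u_0=p_0u^*q_0up_0$ and $u_0u_0^*=q_0up_0u^*q_0$ and reducing modulo the ideal of functions on $X_0$ vanishing in norm at $-\infty$, one finds that $u_0^*u_0-p_0$ and $u_0u_0^*-q_0$ themselves vanish in norm at $-\infty$. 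Hence for $x$ sufficiently close to $-\infty$ we have $\|p_0(x)-u_0(x)^*u_0(x)\|<1$ and $\|q_0(x)-u_0(x)u_0(x)^*\|<1$; since $\ker\bigl(u_0(x)|_{p_0(x)H}\bigr)=E_1(p_0(x)-u_0(x)^*u_0(x))$ and likewise $\ker\bigl(u_0(x)^*|_{q_0(x)H}\bigr)=E_1(q_0(x)-u_0(x)u_0(x)^*)$, both of these vanish, so $\Ind(u_0(x))=0$ there, and then $t_0=0$ by Lemma \ref{L:const}. The symmetric argument at $+\infty$ gives $t_n=0$, and the telescoping sum now reads $\sum_{i=1}^n(k_i-l_i)=0$, which is the assertion.

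For $X=\mathbb{T}$ the point $x_0=0=1$ is simultaneously the right endpoint of $X_n$ and the left endpoint of $X_0$, and the local lifting condition there gives $p_0(x_0)-p_n(x_0)\in K$ and $q_0(x_0)-q_n(x_0)\in K$, hence $u_0(x_0)-u_n(x_0)\in K$. I would then repeat verbatim the polar-decomposition computation that produced (\ref{E:vonmurray})---polar decompose $u_0(x_0)$ and $u_n(x_0)$, use $v_0-v_n\in K$ together with Proposition \ref{P:properties}(3) and the identity $[v_0v_0^*:v_nv_n^*]=[v_0^*v_0:v_n^*v_n]$---now with the pair $(X_0,X_n)$ in place of $(X_i,X_{i-1})$, obtaining $t_0-t_n=k_0-l_0$ where $k_0=[p_0(x_0):p_n(x_0)]$ and $l_0=[q_0(x_0):q_n(x_0)]$. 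Adding this to (\ref{E:vonmurray}) for $i=1,\dots,n$ gives $\sum_{i=1}^n(k_i-l_i)+(k_0-l_0)=(t_n-t_0)+(t_0-t_n)=0$, which upon rearranging is the desired identity $\sum_{i=1}^nk_i+k_0=\sum_{i=1}^nl_i+l_0$.

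The main obstacle is the vanishing $t_0=t_n=0$ in the real line case; everything else is formal telescoping. Within that step the delicate point is to verify that $u_0^*u_0-p_0$ and $u_0u_0^*-q_0$ lie in the \emph{small} ideal of functions on $X_0$ vanishing in norm at $-\infty$, rather than merely in $C_b(X_0,K)$; this is exactly where the relations $u^*u-p\in I$, $uu^*-q\in I$, $uu^*u-u\in I$ must be combined with the fact that a local lifting agrees with the global function up to something vanishing at the infinite endpoint.
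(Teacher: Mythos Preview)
Your proposal is correct and follows essentially the same route as the paper: telescope (\ref{E:vonmurray}) to get $\sum k_i-\sum l_i=t_n-t_0$, then argue $t_0=t_n=0$ at the infinite ends, and in the circle case add the extra instance of (\ref{E:vonmurray}) at $x_0$. Your treatment is in fact more thorough than the paper's, which simply asserts $p_0-u_0^*u_0,\,q_0-u_0u_0^*\in C_0((-\infty,x_1],K)$ without spelling out the algebra you carry through with $u^*u-p$, $uu^*-q$, and $uu^*u-u$ all lying in $I$.
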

     \begin{proof}
      By taking a sum in both sides of Eq. (\ref{E:vonmurray}), we have
      $\sum k_i -\sum l_i=t_n -t_0$. Since $q_0 - u_0{u_{0}}^{*},
     p_0 -{u_{0}}^{*}u_0
     \in C_{0}(-\infty,x_1]\otimes K$, and $q_n - u_n{u_{n}}^{*}, p_n -
     {u_{n}}^{*}u_n \in C_{0}[x_n,\infty)\otimes K$ we have
     $t_n=t_0=0$. 
     
     In the circle case, it follows by adding Eq. (\ref{E:vonmurray}) for $i=1,\dots, n+1$ modulo $n+1$.
      \end{proof}
    \begin{lem}\label{L:onetoone}
     Let $(p_0, \cdots,
    p_n)$ and $(q_0, \cdots, q_n)$ be local liftings of $\mathbf{p}$
    and $\mathbf{q}$ such that $\rank (1-q_{i}(x))=\rank (q_i(x))=\infty$
    for each $x$ in $X_i$.
    
    If $ \sum_{i=1}^n[p_i(x_i):p_{i-1}(x_i)]=\sum_{i=1}^n
    [q_i(x_i):q_{i-1}(x_i)]$, or  $\sum_{i=1}^n[p_i(x_i):p_{i-1}(x_i)]+ [p_0(x_0):p_n(x_0)]=\sum_{i=1}^n
    [q_i(x_i):q_{i-1}(x_i)]+[q_0(x_0):q_n(x_0)]$ in the circle case, then we can find a perturbation  $(q_0', \cdots,
    q_n')$ of $\mathbf{q}$ such that $[p_i(x_i):p_{i-1}(x_i)]=
    [q_i'(x_i):q_{i-1}'(x_i)]$ for $i=1,\dots, n $ or $[p_i(x_i):p_{i-1}(x_i)]=
    [q_i'(x_i):q_{i-1}'(x_i)]$for $i=1,\dots, n+1$ modulo $n+1$.
     \end{lem}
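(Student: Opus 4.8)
The plan is to keep $\mathbf{p}$ (hence the integers $k_i=[p_i(x_i):p_{i-1}(x_i)]$) fixed and to redistribute the discontinuities of $\mathbf{q}$ by modifying each $q_i$ on its interval $X_i$ through the addition or removal of a trivial subfield. Write $l_i=[q_i(x_i):q_{i-1}(x_i)]$, with the convention that in the circle case there is also the jump $k_0=[p_0(x_0):p_n(x_0)]$ and $l_0=[q_0(x_0):q_n(x_0)]$ at $x_0=0=1$. First I would record that the hypothesis is exactly what lets us ``integrate'' the differences $k_i-l_i$: in the line case set $m_0=m_n=0$ and $m_j=\sum_{i=1}^{j}(k_i-l_i)$ for $1\le j\le n-1$; since $\sum_{i=1}^{n}(k_i-l_i)=0$ this is consistent with $m_n=0$. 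In the circle case the vector $(k_j-l_j)_{j=0}^{n}\in\mathbb{Z}^{n+1}$ has vanishing coordinate sum, so it lies in the range of the cyclic difference homomorphism $(m_j)_j\mapsto(m_j-m_{j-1})_j$ (indices mod $n+1$); choose integers $m_0,\cdots,m_n$ with $m_j-m_{j-1}=k_j-l_j$ for every $j$.

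Next, for each relevant index $j$ --- in the line case $1\le j\le n-1$ and in the circle case $0\le j\le n$, so that $X_j$ is a compact interval --- I would construct a perturbation $q_j'$ of $q_j$ on $X_j$ with $[q_j'(x):q_j(x)]=m_j$ for every $x\in X_j$. If $m_j\ge 0$, then $\rank(1-q_j(x))=\infty\ge m_j$ for all $x$, so Lemma \ref{L:subprojection} applied to the continuous field $x\mapsto(1-q_j(x))H$ yields a norm continuous projection valued $r_j$ on $X_j$ with $r_j\le 1-q_j$ and $\rank r_j(x)=m_j$; put $q_j'=q_j+r_j$. If $m_j<0$, apply Lemma \ref{L:subprojection} to $x\mapsto q_j(x)H$ to get a norm continuous $r_j'\le q_j$ with $\rank r_j'(x)=-m_j$, and put $q_j'=q_j-r_j'$. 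In the line case set $q_0'=q_0$ and $q_n'=q_n$: one cannot perturb these intervals nontrivially because of the vanishing condition at the infinite endpoint, and indeed $m_0=m_n=0$ is forced, in agreement with (\ref{E:eq4}). In all cases $q_j'$ is strongly continuous and projection valued, $q_j'-q_j$ is norm continuous and finite-rank valued, hence lies in $C_b(X_j,K)$, and since $q_j(x)\le q_j'(x)$ (resp. $q_j'(x)\le q_j(x)$) Proposition \ref{P:properties}(1),(2) gives $[q_j'(x):q_j(x)]=m_j$ throughout $X_j$ (consistent with Corollary \ref{C:homotopy}).

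Finally I would verify that $(q_0',\cdots,q_n')$ is again a local lifting of $\mathbf{q}$: each $q_j'-q_j\in C_b(X_j,K)$, in the line case $q_0'-q_0=q_n'-q_n=0$ so the conditions at $\pm\infty$ hold trivially, and $q_j'(x_j)-q_{j-1}'(x_j)$ differs from $q_j(x_j)-q_{j-1}(x_j)\in K$ by compacts. At each gluing point Proposition \ref{P:properties}(3) yields
\[
[q_j'(x_j):q_{j-1}'(x_j)]=[q_j'(x_j):q_j(x_j)]+[q_j(x_j):q_{j-1}(x_j)]+[q_{j-1}(x_j):q_{j-1}'(x_j)]=m_j+l_j-m_{j-1},
\]
which by the choice of the $m_j$ equals $k_j=[p_j(x_j):p_{j-1}(x_j)]$, for $j=1,\cdots,n$ and, in the circle case (reading indices mod $n+1$, $q_{-1}'=q_n'$, $x_0=0=1$), for $j=0$ as well; this is precisely the assertion. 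The main obstacle is the construction in the second paragraph: turning the merely \emph{strongly} continuous field $q_j$ of infinite-rank projections over an interval into a genuine norm-compact perturbation with a prescribed essential-codimension defect. This is exactly the content of Lemma \ref{L:subprojection} (proved in the Appendix through continuous fields of Hilbert spaces); the naive alternative of conjugating $q_j$ by a path of unitaries realizing the right jump at one endpoint fails, since such a conjugation need not differ from $q_j$ by a compact-valued function in the interior of $X_j$. A secondary, purely bookkeeping point is that it is the equality of total essential codimensions that makes the discrete problem $m_j-m_{j-1}=k_j-l_j$ solvable over $\mathbb{Z}$.
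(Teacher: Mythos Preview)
Your proof is correct and follows essentially the same strategy as the paper. Both arguments leave $q_0$ (and, in the non-circle case, $q_n$) untouched and shift each intermediate $q_j$ by a finite-rank norm-continuous subprojection of rank $\bigl|\sum_{i\le j}(k_i-l_i)\bigr|$, invoking Lemma~\ref{L:subprojection} and the infinite-rank hypotheses on $q_j$ and $1-q_j$; the only difference is cosmetic---you precompute the cumulative shifts $m_j$ and then perturb, whereas the paper carries out the same construction inductively, tracking the running defect $[q_{i+1}(x_{i+1}):q_i'(x_{i+1})]=l_{i+1}-\sum_{k\le i}d_k$ at each step.
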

     \begin{proof}
      Let $[p_i(x_i):p_{i-1}(x_i)]=k_i, [q_i(x_i):q_{i-1}(x_i)]=l_i$.
      If $d_i=k_i-l_i$, note that
      $$ \sum[p_i(x_i):p_{i-1}(x_i)]=\sum
    [q_i(x_i):q_{i-1}(x_i)] \quad \text{if and only if} \quad \sum d_i =0. $$
      Let $q_0'=q_0$. Suppose that we have constructed $q_0',
\cdots, q_{i}'$ such that $[p_j(x_j):p_{j-1}(x_j)]=
    [q_j'(x_j):q_{j-1}'(x_j)]$ for $j=1,\cdots,i$ and
    $[q_{i+1}(x_{i+1}):q_{i}'(x_{i+1})]=l_{i+1}-\sum_{k=1}^{i}
    d_k$. \\
    If $d_{i+1}+\sum_{k=1}^{i} d_k >0$, let $q$ be a projection valued (norm continuous) function such that
    $q \leq 1-q_{i+1}$ and rank$(q(x))=d_{i+1}+\sum_{k=1}^{i}
    d_k$ which is possible since rank $(1-q_{i+1})(x)$ $\geq d_{i+1}+\sum_{k=1}^{i} d_k >0 $.\\
    Take $q_{i+1}'=q+q_{i+1}$. Then
    \begin{align*}
    [q_{i+1}'(x_{i+1}):q_{i}'(x_{i+1})]&=[q_{i+1}(x_{i+1}):q_{i}'(x_{i+1})]+[q(x_{i+1}):0] \\
                                              &=l_{i+1}-\sum_{k=1}^{i} d_k + d_{i+1}+\sum_{k=1}^{i} d_k\\
                                              &=l_{i+1}+k_{i+1}-l_{i+1}\\
                                              &=k_{i+1}
    \end{align*}
    \begin{align*}
    [q_{i+2}(x_{i+2}):q_{i+1}'(x_{i+2})]&=[q_{i+2}(x_{i+2}):q_{i+1}(x_{i+2})]+[0:q(x_{i+2})] \\
                                              &=l_{i+2} -( d_{i+1}+\sum_{k=1}^{i} d_k)\\
                                              &=l_{i+2}-\sum_{k=1}^{i+1} d_k
   \end{align*}

    If $d_{j+1}+\sum_{k=1}^{j} d_k < 0$, let $q$ be a projection valued (norm continuous) function such that
    $q \leq q_{i+1}$ and rank$(q(x))=-(d_{i+1}+\sum_{k=1}^{i}
    d_k)$ which is possible since rank $(q_{i+1})(x)$ $\geq -(d_{i+1}+\sum_{k=1}^{i} d_k) >0 $.\\
    Take $q_{i+1}'=q_{i+1}-q$. \\
     Note that
    $$[q_{i+1}'(x_{i+1}):q_{i}'(x_{i+1})]+[q(x_{i+1}):0]=
    [q_{i+1}'(x_{i+1}):q_{i}(x_{i+1})]$$
    Thus
    \begin{align*}
[q_{i+1}'(x_{i+1}):q_{i}'(x_{i+1})]&=l_{i+1}-\sum_{k=1}^{i}d_k + (d_{i+1}+\sum_{k=1}^{i} d_k)\\
                                         &=l_{i+1}+d_{i+1}\\
                                         &=k_{i+1}
    \end{align*}
    Also,
    \begin{align*}
    [q_{i+2}(x_{i+2}):q_{i+1}'(x_{i+2})]&=[q_{i+2}(x_{i+2}):q_{i+1}(x_{i+2})]+[q(x_{i+2}):0] \\
                                              &=l_{i+2} -( d_{i+1}+\sum_{k=1}^{i} d_k)\\
                                              &=l_{i+2}-\sum_{k=1}^{i+1} d_k
   \end{align*}
    By induction, we can get $q_{0}', \cdots, q_{n-1}'$ such
    that $[p_j(x_j):p_{j-1}(x_j)]=
    [q_j'(x_j):q_{j-1}'(x_j)]$ for $j=1,\cdots,n-1$
   as we want. \\
    Finally, since we also have $[q_{n}(x_{n}):q_{n-1}'(x_{n})]=l_{n}-\sum_{k=1}^{n-1}
    d_k=l_{n}+d_{n}=k_{n}$ from $\sum_{k=1}^{n-1}
    d_k+d_{n}=0$, we take $q_{n}'=q_{n}$.
    
    In the circle case, we perturb $q_n$ to $q'_n$ such that $[q_n'(x_n):q'_{n-1}(x_n)]=k_n$ and $[q_0(x_0):q'(x_0)]=l_0- \sum_{k=1}^n d^k=l_0+d_0=k_0$.
   \end{proof}
 Next is an analogous result that is more symmetrical.
   \begin{lem}\label{L:samerank}
    Let $(p_0, \cdots,
    p_n)$ and $(q_0, \cdots, q_n)$ be local liftings of $\mathbf{p}$
    and $\mathbf{q}$ such that $\rank(p_i(x))=\rank (q_i(x))=\infty$
    for each $x$ in $X_i$.\\
    If $ \sum[p_i(x_i):p_{i-1}(x_i)]=\sum
    [q_i(x_i):q_{i-1}(x_i)]$, or  $[p_i(x_i):p_{i-1}(x_i)]=
    [q_i'(x_i):q_{i-1}'(x_i)]$for $i=1,\dots, n+1$ modulo $n+1$, then we can find  perturbations  $(q_0', \cdots,
    q_n')$ of $\mathbf{q}$ and $(p_0', \cdots,
    p_n')$ of $\mathbf{p}$ such that $[p_i'(x_i):p_{i-1}'(x_i)]=
    [q_i'(x_i):q_{i-1}'(x_i)]$ for $i=1,\dots,n$, or $[p_i'(x_i):p_{i-1}'(x_i)]=
    [q_i'(x_i):q_{i-1}'(x_i)]$for $i=1,\dots, n+1$ modulo $n+1$ in the circle case. 
   \end{lem}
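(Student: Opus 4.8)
The plan is to re-run the inductive perturbation scheme of Lemma \ref{L:onetoone}, but to exploit the symmetry between $\mathbf{p}$ and $\mathbf{q}$ so as never to need the (perturbation--invariant) ranks of $1-p_i(x)$ or $1-q_i(x)$ to be infinite. Write $k_i=[p_i(x_i):p_{i-1}(x_i)]$, $l_i=[q_i(x_i):q_{i-1}(x_i)]$ and $d_i=k_i-l_i$, so the hypothesis reads $\sum d_i=0$ (cyclically in the circle case). As in Lemma \ref{L:onetoone} it suffices to produce projection--valued norm--continuous perturbations $p_i'$ of $p_i$ and $q_i'$ of $q_i$, with $p_i'-p_i,\ q_i'-q_i\in C_b(X_i,K)$ vanishing near any infinite endpoint of $X_i$, such that $[p_i'(x_i):p_{i-1}'(x_i)]=[q_i'(x_i):q_{i-1}'(x_i)]$ for every $i$.

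First I would set $p_0'=p_0$, $q_0'=q_0$ and treat the partition points $x_1,\dots,x_n$, and in the circle case $x_0$ last, in order, keeping the invariant that once $x_i$ is treated one has $[p_j'(x_j):p_{j-1}'(x_j)]=[q_j'(x_j):q_{j-1}'(x_j)]$ for $j\le i$ and the residual defect $D_{i+1}:=[p_{i+1}(x_{i+1}):p_i'(x_{i+1})]-[q_{i+1}(x_{i+1}):q_i'(x_{i+1})]$ equals $s_i:=\sum_{j=1}^{i} d_j$ (at the start $D_1=d_1=s_1$). Since $\rank(p_i(x))=\rank(q_i(x))=\infty$ on $X_i$, Lemma \ref{L:subprojection} provides, for any nonnegative integer $m$, a rank--$m$ norm--continuous subprojection of $p_i$, resp.\ of $q_i$, on $X_i$. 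At step $i$: if $D_i=s_i>0$ I subtract from $p_i$ a rank--$D_i$ such subprojection $q'$ and set $p_i'=p_i-q'$, $q_i'=q_i$; if $D_i<0$ I subtract from $q_i$ a rank--$(-D_i)$ subprojection $q'$ and set $q_i'=q_i-q'$, $p_i'=p_i$; if $D_i=0$ I leave $p_i,q_i$ alone. By Proposition \ref{P:properties} the essential codimensions at $x_i$ become equal in every case, while $[p_{i+1}(x_{i+1}):p_i'(x_{i+1})]$ becomes $k_{i+1}+D_i$ in the first case and $[q_{i+1}(x_{i+1}):q_i'(x_{i+1})]$ becomes $l_{i+1}-D_i$ in the second (nothing changes in the third); either way $D_{i+1}=d_{i+1}+D_i=s_{i+1}$, so the invariant persists.

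Finally I would handle the endpoints and conclude. When $X=[0,1]$, $[0,\infty)$, or $(-\infty,\infty)$ the hypothesis gives $s_n=\sum d_i=0$, so $D_n=0$ and no perturbation is made at $x_n$; since all perturbations for $i\le n-1$ live on the compact intervals $X_1,\dots,X_{n-1}$ and $X_0$ is never perturbed, each $p_i'-p_i$ and $q_i'-q_i$ vanishes near the infinite endpoints, so $(p_i')$, $(q_i')$ really are local liftings of $\mathbf{p}$, $\mathbf{q}$, and by construction $[p_i'(x_i):p_{i-1}'(x_i)]=[q_i'(x_i):q_{i-1}'(x_i)]$ for $i=1,\dots,n$. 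In the circle case every $X_i$ is compact, so treating $x_1,\dots,x_n$ causes no difficulty, and after step $n$ the residual defect at $x_0$ is $D_0=d_0+s_n=\sum_{i=0}^{n} d_i=0$, so $x_0$ needs no perturbation either; thus $[p_i'(x_i):p_{i-1}'(x_i)]=[q_i'(x_i):q_{i-1}'(x_i)]$ for $i=1,\dots,n+1$ modulo $n+1$. The condition $\rank(p_i'(x))=\rank(q_i'(x))=\infty$ also survives, since only finite--rank subprojections were removed. The only genuinely new point, and the main obstacle, is the breakdown of the one--sided argument of Lemma \ref{L:onetoone} when, say, $1-q_i$ has finite rank: the fix of always lowering a rank, on the $p$--side when the defect is positive and on the $q$--side when it is negative, uses only $\rank(p_i(x))=\rank(q_i(x))=\infty$, but requires careful sign bookkeeping so that the running defect both clears at $x_i$ and propagates correctly, and one must check it is exactly $0$ at the step where $X_i$ might fail to be compact, which is precisely where the global hypothesis $\sum d_i=0$ gets used.
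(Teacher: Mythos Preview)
Your proposal is correct and matches the paper's approach exactly: replace the step in Lemma~\ref{L:onetoone} that enlarges $q_{i+1}$ (which would need $\rank(1-q_{i+1})=\infty$) by one that shrinks $p_{i+1}$ instead (which only needs $\rank(p_{i+1})=\infty$), and symmetrically shrink $q_{i+1}$ when the running defect is negative. Apart from a harmless indexing slip in the stated invariant (your base case $D_1=s_1$ and induction step $D_{i+1}=d_{i+1}+D_i=s_{i+1}$ show that $D_{i+1}=s_{i+1}$, not $s_i$), the bookkeeping and endpoint analysis are correct.
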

   \begin{proof}
    The proof proceeds as above with one exception: If $d_{i+1}+\sum_{k=1}^{i} d_k \geq
    0$, we make $p_{i+1}' \leq p_i$ rather than making $q_{i+1}'\geq
    q_{i}$.
  \end{proof}

 Recall that $K_i(\mc{C}(I))=K_{i+1}(I) \,(\mbox{mod}\,2)$. Thus
\begin{equation*}
    K_0(\mc{C}(I))=\begin{cases}
                  \mathbb{Z} &, \text{if $X=(-\infty,\infty)$}, \mb{T}=[0,1]/\{0,1\}\\
                  0          &, \text{otherwise}.
            \end{cases}
    \end{equation*}
    We want to analyze  these isomorphisms in a concrete way to compare $K_0$--classes of  projections in $\mc{C}(I)$
    where  $I=C(X)\otimes K$, and $X=(-\infty,\infty)$, or $X=\mb{T}=[0,1]/\{0,1\}$. For this
   we define a map $\chi:K_{0}(\mathcal{C}(I)) \to  \mathbb{Z}$
   as follows: Let $\alpha=[\mathbf{p}]-[\mathbf{q}]$ be an element of  $
     K_{0}(\mathcal{C}(I))$ and $(p_0, \cdots,
    p_n)$ and $(q_0, \cdots, q_n)$ be local liftings of $\mathbf{p}$
    and $\mathbf{q}$ respectively. Then $\chi(\alpha)=  \sum_{i=1}^{n}[p_i(x_i):p_{i-1}(x_i)]-\sum_{i=1}^{n}
    [q_i(x_i):q_{i-1}(x_i)]$ or $\sum_{i=1}^{n+1}[p_i(x_i):p_{i-1}(x_i)]-\sum_{i=1}^{n+1}
    [q_i(x_i):q_{i-1}(x_i)]$ with $x_{n+1}=x_0$ in the case $X=[0,1]/\{0,1\}$.

    \begin{thm}\label{T:main}
    The map $$ \chi:K_{0}(\mathcal{C}(I)) \to  \mathbb{Z} $$
     is an isomorphism.
    \end{thm}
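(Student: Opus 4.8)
The plan is to establish that $\chi$ is a well-defined group homomorphism which is surjective, and then to conclude that it is an isomorphism from the fact recalled just before the statement that $K_0(\mathcal{C}(I))\cong K_1(I)\cong\mathbb{Z}$: a surjective homomorphism $\mathbb{Z}\to\mathbb{Z}$ is automatically injective.

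The bulk of the work is well-definedness. I would set $\chi_0(\mathbf{p})=\sum_{i=1}^n[p_i(x_i):p_{i-1}(x_i)]$ for a projection $\mathbf{p}$ in $M_m(\mathcal{C}(I))$ (all of which are covered by the matrix version of Theorem \ref{T:partition}, after the identification $M_m(C(X)\otimes K)\cong C(X)\otimes K$), with the extra wraparound term $[p_0(x_0):p_n(x_0)]$ when $X=\mathbb{T}$, and then check four points. (a) $\chi_0(\mathbf{p})$ is independent of the local lifting for a fixed partition: this is Lemma \ref{L:welldefine1} for $X=(-\infty,\infty)$, and for $X=\mathbb{T}$ the identical telescoping computation applies, the only change being that the two boundary terms now coincide and cancel rather than vanishing by Theorem \ref{T:homotopy}. (b) $\chi_0(\mathbf{p})$ is independent of the partition: pass to a common refinement and observe that each newly inserted point lies in the interior of some $X_j$, where $f_j$ is continuous, hence contributes $[f_j(y):f_j(y)]=0$. (c) $\chi_0(\mathbf{p}\oplus\mathbf{p}')=\chi_0(\mathbf{p})+\chi_0(\mathbf{p}')$: choose a common partition and apply Proposition \ref{P:properties}(4) term by term. (d) $\chi_0$ is invariant under Murray--von Neumann equivalence: this is exactly Lemma \ref{L:welldefine2}, whose hypothesis is always met since any partial isometry in $\mathcal{C}(I)$ lifts to some element of $M(I)$ by surjectivity of $\pi$. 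Properties (c) and (d) say $\chi_0$ is a monoid homomorphism $V(\mathcal{C}(I))\to(\mathbb{Z},+)$; as $\mathcal{C}(I)$ is unital, $K_0(\mathcal{C}(I))$ is the Grothendieck group of $V(\mathcal{C}(I))$, so $\chi_0$ extends uniquely to the homomorphism $\chi$ with $\chi([\mathbf{p}]-[\mathbf{q}])=\chi_0(\mathbf{p})-\chi_0(\mathbf{q})$.

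For surjectivity, fix $P\in B(H)$ with $P$ and $1-P$ of infinite rank, and given $m\in\mathbb{Z}$ pick, using Proposition \ref{P:decomposition}, a projection $Q$ with $P-Q\in K$ and $[P:Q]=m$. Take the partition with a single interior point $x_1$. In the line case let $f_0\equiv Q$ on $X_0$ and $f_1\equiv P$ on $X_1$; in the circle case (with the extra partition point $x_0$) let $f_1\equiv P$ on $X_1$ and let $f_0$ be any norm-continuous projection-valued path on $X_0$ from $P$ at $x_0$ to $Q$ at $x_1$, which exists since $P$ and $Q$ are unitarily equivalent in $B(H)$ and $\mathcal{U}(B(H))$ is norm-connected. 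The compatibility conditions $f_1(x_1)-f_0(x_1)=P-Q\in K$ (and $f_0(x_0)-f_1(x_0)=0$ in the circle case) hold, so this data represents a projection $\mathbf{p}$ in $\mathcal{C}(I)$ as in Section \ref{Lifting}, with $\chi_0(\mathbf{p})=[P:Q]=m$; taking $\mathbf{q}$ a constant, hence globally liftable, projection gives $\chi([\mathbf{p}]-[\mathbf{q}])=m$.

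Finally, $\chi$ is a surjective homomorphism whose domain is $K_0(\mathcal{C}(I))\cong K_1(I)$, and $K_1(C_0(\mathbb{R})\otimes K)\cong K_1(C_0(\mathbb{R}))\cong\mathbb{Z}$ by Bott periodicity while $K_1(C(\mathbb{T})\otimes K)\cong K_1(C(\mathbb{T}))\cong\mathbb{Z}$; so the domain is isomorphic to $\mathbb{Z}$ and $\chi$ must be an isomorphism. I expect the main obstacle to be the well-definedness in the first step --- in particular treating the circle case uniformly with the line case and keeping the partition-refinement bookkeeping and the passage to $K_0$ clean. As an alternative, injectivity can be argued directly from Lemma \ref{L:samerank}: after stabilizing so that all local pieces have infinite rank and infinite corank, their continuous fields over the one-dimensional $X_i$ are trivial, so matching essential codimensions forces Murray--von Neumann equivalence of $\mathbf{p}$ and $\mathbf{q}$; but the route through $K_1(I)\cong\mathbb{Z}$ is shorter.
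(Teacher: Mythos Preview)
Your argument is correct, and in fact your treatment of well-definedness is more careful than the paper's (which simply cites Lemmas~\ref{L:welldefine1} and~\ref{L:welldefine2} without discussing partition refinement or additivity). The substantive difference is in how you establish that $\chi$ is a bijection. The paper proves \emph{injectivity} directly: given $\chi([\mathbf p]-[\mathbf q])=0$, it stabilizes to make all ranks and coranks infinite, uses Lemma~\ref{L:onetoone} to arrange $k_i=l_i$, invokes Theorem~\ref{T:triviality} to get strongly continuous $u_i$ with $u_i^*u_i=p_i$ and $u_iu_i^*=q_i$ on each $X_i$, and then perturbs each $u_i$ near $x_i$ (via the Fredholm-index-zero condition and path-connectedness of the unitary group) so that $u_i(x_i)-u_{i-1}(x_i)\in K$, yielding an honest Murray--von Neumann equivalence in $\mathcal C(I)$. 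Surjectivity is left implicit. You instead prove \emph{surjectivity} by an explicit construction and deduce injectivity from the ambient fact $K_0(\mathcal C(I))\cong\mathbb Z$. Your route is shorter and perfectly valid; the paper's constructive route, however, is what produces the content of the Remark following Theorem~\ref{T:main} and feeds directly into Proposition~\ref{P:vonmurray} and its corollaries, so the extra work is not wasted. (One small quibble: Proposition~\ref{P:decomposition} characterizes $[P:Q]$ rather than constructs $Q$; you really just want $Q=P\pm r$ for a finite-rank $r$, which is immediate.) Your closing ``alternative'' sketch via Lemma~\ref{L:samerank} is essentially the paper's own argument, modulo the choice between Lemmas~\ref{L:onetoone} and~\ref{L:samerank}.
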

    \begin{proof}
     $\chi$ is well-defined by Lemma \ref{L:welldefine1},
     \ref{L:welldefine2}.\\
      For injectivity of $\chi$,
     let $\alpha=[\mathbf{p}]-[\mathbf{q}]$ be an element of  $
     K_{0}(\mathcal{C}(I))$ such that $\chi(\alpha)=0$. If  $(p_0, \cdots,
    p_n)$ and $(q_0, \cdots, q_n)$ are local liftings of $\mathbf{p}$
    and $\mathbf{q}$ respectively, then $\mathbf{p}$ and $\mathbf{q}$ are Murray- von Neumann equivalent if and
    only if there is $\mathbf{u}$ in $\mathcal{C}(I)$ such that $\mathbf{u}^*\mathbf{u}=\mathbf{p}$ and
    $\mathbf{u}\mathbf{u}^*=\mathbf{q}$. To construct $\mathbf{u}$, it suffices to find a
    representation $(u_0,\cdots,u_n)$ such that
     $${u_i}^{\ast}u_i=p_i,\quad
    u_i{u_i}^{\ast}=q_i, \quad
    u_i(x_i)-u_{i-1}(x_i) \in K
     $$

    By replacing $\mathbf{p}$ with $\mathbf{p}\oplus 1_N \oplus 0_N$ and $\mathbf{q}$ with $\mathbf{p}\oplus 1_N \oplus 0_N$ for some $N$,
    we may assume that
    $\rank p_i(x)=\rank (1-p_i(x))=\rank q_i(x)= \rank (1-q_i(x))=\infty$.
    Moreover, if we let
    $k_i=[p_i(x_i):p_{i-1}(x_i)]$ and $l_i=[q_i(x_i):q_{i-1}(x_i)]$,
    by Lemma \ref{L:onetoone}, we may also assume that $k_i=l_i$ for each $i$.
    Since $\rank(p_i(x))=\rank(q_i(x))=\infty$, there is a (double) strongly
    continuous function $u_i$ on each $X_i$ such that ${u_i}^{\ast}u_i=p_i,
    u_i{u_i}^{\ast}=q_i$ because $\dim(X_i) < \infty$ (see Theorem \ref{T:triviality}). But,
     we do not claim $(u_0,\cdots,u_n) \in \mathcal{C}(I)$ because we have not yet achieved
    $u_i(x_i)-u_{i-1}(x_i) \in K$. \\
    On each $X_i$, we are going to change $u_i$ on a half-open interval including
    $x_i$ such that $u_i(x_i)-u_{i-1}(x_i) \in K$ and
    ${u_i}^*u_i=p_i, u_i{u_i}^{*}=q_i$. \\
    Since $\Ind(u_{i-1}(x_i))=0$, $$\Ind(q_i(x_i)u_{i-1}(x_i)p_i(x_i))=
   -l_i+\Ind(u_{i-1}(x_i))+k_i=0$$ and
    $q_i(x_i)u_{i-1}(x_i)p_i(x_i)-u_{i-1}(x_i) \in K$. Thus there is a
    $v_i \in B(H)$ which is a unitary from $p_i(x_i)H$ to $q_i(x_i)H$ such that
    $v_i-q_i(x_i)u_{i-1}(x_i)p_i(x_i) \in K$. Now ${u_i(x_i)}^*v_i$ is
    a unitary from $p_i(x_i)H$ to $p_i(x_i)H$. Combining  triviality of the continuous
    field of Hilbert spaces determined by $p_{i}$ and path connectedness of
    unitary group of B(H), we can form a path $\{v(t) : t\in [x_i,x]\}$ of unitaries
    such that $v(x_i)={u_i(x_i)}^{*}v_i$  and $v(x)=I_{p_{i}(x)H}$  for some $x \in
    X_i$. Let's take $w_i=u_iv$, then
    \begin{align*}
    w_i(x_i)-u_{i-1}(x_i) &= v_i
     - u_{i-1}(x_i) \in
    K \\
    {w_{i}}^*w_{i} &=v^*{u_i}^*u_iv=v^*p_iv=p_i \\
    w_{i}{w_{i}}^* &=u_ivv^*{u_i}^*=u_ip_i{u_i}^*=q_i
     \end{align*}
     Thus, we define
     \[
     u_i'=
             \begin{cases}
         w_i, \quad \text{on} \quad[x_i,x] \\
     u_i, \quad \text{on} \quad[x,x_{i+1}]
             \end{cases}
     \]
     Now $(u_0,u_1',\cdots,u_n')$ is what we want in the case $X=(-\infty, \infty)$, and by taking one more step $(u'_1,\dots,u'_n,u'_0)$ is what we want in the case $X=\mb{T}$.
    \end{proof}
    \begin{rem}
    In the above, in fact, we proved the following characterizations.
    \begin{itemize}
    \item[(i)]
    $[\mathbf{p}]=[\mathbf{q}]$ \quad if and only if $\quad \sum
     k_i=\sum l_i$.
    \item[(ii)]If $\rank(p_i(x))=\rank(q_i(x))=\infty $, then there exist $u_i'$s such that ${u_i}^*u_i=p_{i}$,
    $u_i{u_i}^*=q_i$ and $u_{i}(x_i)-u_{i-1}(x_i) \in K$
    if and only if $k_i=l_i$.
    \end{itemize}
    \end{rem}
    \begin{cor}
    $[\mathbf{p}]_0$ is liftable if and only if $p\oplus 1_n \oplus 0_n$
    is liftable.
    \end{cor}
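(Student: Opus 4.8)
The plan is to deduce this from Theorem~\ref{T:liftingthm} and Theorem~\ref{T:main}. We read ``$[\mathbf p]_0$ is liftable'' in the usual sense that $[\mathbf p]_0$ lies in the image of $\pi_*\colon K_0(M(I))\to K_0(\mathcal C(I))$, and we read the right--hand side as: $\mathbf p\oplus 1_n\oplus 0_n$ lifts to a projection over $M(I)$ for some $n$ (equivalently for all sufficiently large $n$, since a lift of $\mathbf p\oplus 1_n\oplus 0_n$ may be padded by a constant $1\oplus 0$). We use $M_m(\mathcal C(I))=\mathcal C(M_m(I))$ and $M_m(I)\cong I$, so Theorems~\ref{T:liftingthm} and \ref{T:main} apply to $\mathbf p$ and to $\mathbf p\oplus 1_n\oplus 0_n$ verbatim. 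First I record two reductions. By exactness of the six--term sequence, $[\mathbf p]_0$ is liftable iff $\partial_0([\mathbf p]_0)=0$; and since $I$ is stable, $K_*(M(I))=0$, so $\partial_0$ is injective, and hence a class in $K_0(\mathcal C(I))$ is liftable iff it is $0$ (this is vacuous when $K_0(\mathcal C(I))=0$, i.e. for $X=[0,1],[0,\infty)$).

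One direction is easy: if $\mathbf p\oplus 1_n\oplus 0_n$ lifts to a projection $P$ over $M(I)$, then $\pi_*([P]_0)=[\mathbf p]_0+[1_n]_0$ in $K_0(\mathcal C(I))$, and since $1_n$ already lifts to the constant projection $1_n$ over $M(I)$, we get $[\mathbf p]_0=\pi_*([P]_0)-\pi_*([1_n]_0)\in\mathrm{im}\,\pi_*$; so $[\mathbf p]_0$ is liftable. That is all that is needed here.

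For the converse I would first convert liftability of $\mathbf p\oplus 1_n\oplus 0_n$ into an explicit arithmetic condition. Fix a representation $(p_0,\dots,p_n)$ of $\mathbf p$ with each $p_i$ projection valued (Theorem~\ref{T:partition}) and set $k_i=[p_i(x_i):p_{i-1}(x_i)]$ for $i\ge 1$, together with $k_0=[p_0(x_0):p_n(x_0)]$ in the circle case. For any $N$ the tuple $(p_0\oplus 1_N\oplus 0_N,\dots,p_n\oplus 1_N\oplus 0_N)$ represents $\mathbf p\oplus 1_N\oplus 0_N$ and, by Proposition~\ref{P:properties}(4), has the same integers $k_i$. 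By Theorem~\ref{T:liftingthm}, $\mathbf p\oplus 1_N\oplus 0_N$ lifts to a projection iff there exist $l_0,\dots,l_n$ satisfying (\ref{E:eq1}), (\ref{E:eq2}), (\ref{E:eq3}), (\ref{E:eq4}). The point is that each $p_i$ is norm continuous on the connected interval $X_i$, so $\rank p_i(x)$ and $\rank(1-p_i(x))$ are constant on $X_i$ and at most one of them is finite; adding $1_N\oplus 0_N$ therefore turns the finite--rank constraints (\ref{E:eq2}), (\ref{E:eq3}) into inequalities of the form $l_i\ge -(c_i+N)$ or $l_i\le c_i+N$ with $c_i\ge 0$ fixed. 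Hence, once $N$ exceeds the fixed bound $\sum_j|k_j|$ (which will bound every $|l_i|$ we use), conditions (\ref{E:eq2}) and (\ref{E:eq3}) are automatic, and liftability of $\mathbf p\oplus 1_N\oplus 0_N$ reduces to solvability of the linear recursion (\ref{E:eq1}) together with the pinning conditions (\ref{E:eq4}).

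It then remains to solve that system case by case. For $X=[0,1]$ there is no (\ref{E:eq4}), and $l_0=0$, $l_i=-\sum_{j\le i}k_j$ works; for $X=[0,\infty)$ only the last interval has an infinite end point, so $l_n=0$ is forced and $l_i=\sum_{j>i}k_j$ works; for $X=(-\infty,\infty)$ both $l_0=0$ and $l_n=0$ are forced, so the system is solvable iff $\sum_{i=1}^n k_i=0$; and for $X=\mathbb T$ there is no (\ref{E:eq4}) but the cyclic form of (\ref{E:eq1}) is solvable iff $\sum_{i=0}^n k_i=0$, in which case $l_0=0$, $l_i=-\sum_{j\le i}k_j$ works. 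By the definition of $\chi$ in Section~\ref{Homotopy}, the scalar occurring here --- automatically $0$ for $X=[0,1],[0,\infty)$, and $\sum_{i=1}^n k_i$, respectively $\sum_{i=0}^n k_i$, for $X=(-\infty,\infty)$, respectively $\mathbb T$ --- is precisely $\chi([\mathbf p]_0)$. Thus $\mathbf p\oplus 1_n\oplus 0_n$ lifts to a projection for large $n$ iff $\chi([\mathbf p]_0)=0$, which by Theorem~\ref{T:main} (injectivity of $\chi$) is iff $[\mathbf p]_0=0$, which by the reduction in the first paragraph is iff $[\mathbf p]_0$ is liftable. Together with the easy direction this gives the claim. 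The main obstacle is entirely organizational: one must check carefully, for each of the four spaces $X$, that stabilizing by $1_N\oplus 0_N$ keeps the integers $k_i$ unchanged while wiping out the finite--rank obstructions (\ref{E:eq2})--(\ref{E:eq3}), so that Theorem~\ref{T:liftingthm} collapses to the elementary linear bookkeeping above.
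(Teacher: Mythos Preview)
Your overall strategy is the same as the paper's: reduce ``$[\mathbf p]_0$ liftable'' to $[\mathbf p]_0=0$ via $K_0(M(I))=0$, identify this with the vanishing of $\sum k_i$ through Theorem~\ref{T:main}, and then feed that into the lifting criterion. Your forward direction via Theorem~\ref{T:liftingthm} is actually more explicit than the paper's one--line appeal, and your reverse direction by pure $K$--theory is equivalent to the paper's exponential--map argument.

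There is, however, a genuine error in one step. You assert that ``each $p_i$ is norm continuous on $X_i$, so $\rank p_i(x)$ and $\rank(1-p_i(x))$ are constant on $X_i$ and at most one of them is finite.'' This is false: the local lifts $p_i$ produced by Theorem~\ref{T:partition} lie in $C_b(X_i,B(H))$ for the \emph{strong} operator topology, not the norm topology (cf.\ the description of $M(I)$ at the start of Section~\ref{Lifting}). Rank is only lower semicontinuous along such paths, and it is perfectly possible that $\rank p_i(x)$ is finite at one point of $X_i$ while $\rank(1-p_i(x))$ is finite at another (see e.g.\ the examples at the end of Section~\ref{Homotopy}). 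So your ``single constant $c_i$'' description of the constraints~(\ref{E:eq2})--(\ref{E:eq3}) is not justified.

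Fortunately the conclusion you need survives with a simpler argument. For any $x\in X_i$ one has $\rank\bigl(p_i(x)\oplus 1_N\oplus 0_N\bigr)=\rank p_i(x)+N$ and $\rank\bigl(1-(p_i(x)\oplus 1_N\oplus 0_N)\bigr)=\rank(1-p_i(x))+N$, so every lower bound in (\ref{E:eq2}) and every upper bound in (\ref{E:eq3}) is shifted outward by $N$, uniformly in $x$. Hence, regardless of how the ranks vary along $X_i$, the constraints become $-N\le l_i\le N$ (or weaker), and your choice $|l_i|\le\sum_j|k_j|$ works once $N\ge\sum_j|k_j|$. With this correction the rest of your argument stands.
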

    \begin{proof}
    Since $K_{0}(M(I))=0$, we know $[\mathbf{p}]_0$ is  liftable if and only
    if $[\mathbf{p}]_{0}=0$. The latter is equivalent to $\sum k_i
    =0$. Thus, if $[\mathbf{p}]_0$ is liftable, or $[\mathbf{p}\oplus 1_n \oplus
    0_n]$ is liftable, $\mathbf{p} \oplus 1_n \oplus
    0_n$ is liftable to a projection by Lemma \ref{P:lifting}. \\
    Conversely, if $\mathbf{p}$ is liftable to a projection, using the
    exponential map $\delta_0: K_{0}(\mathcal{C}(I)) \to K_{1}(C_0(-\infty,\infty)\otimes
    K)\simeq K_{0}(K)$, we can see that
    $[\mathbf{p}]_{0}=0$.
    \end{proof}
    \begin{prop}\label{P:vonmurray}
    $\mathbf{p} \sim \mathbf{q}$ in $\mathcal{C}(I)$ if and only if there exist (finite) $t_i$'s, $s_i$'s, $k_i$'s, $l_i$'s such that 
    \begin{itemize}
    \item[(i)]$\rank(p_i(x))=t_i+\rank(q_i(x))$,
    \item[(ii)]$t_i -t_{i-1}=k_i-l_i$,
    \item[(iii)]$t_i=0$ if $X_i$ contains infinite end-point
    \end{itemize}
    for some local liftings $(p_0, \cdots, p_n)$, $(q_0,\cdots,q_n)$
    of $\mathbf{p}$ and $\mathbf{q}$ respectively.
    \end{prop}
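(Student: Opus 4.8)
The plan is to prove both implications by the essential‑codimension bookkeeping already set up before the statement, so throughout I write $k_i=[p_i(x_i):p_{i-1}(x_i)]$ and $l_i=[q_i(x_i):q_{i-1}(x_i)]$ for the integers attached to the chosen local liftings (with indices read modulo $n+1$, and $x_{n+1}=x_0$, in the circle case). For \emph{necessity}, assume $\mathbf p\sim\mathbf q$, say $\pi(u)^*\pi(u)=\mathbf p$ and $\pi(u)\pi(u)^*=\mathbf q$ for some $u\in M(I)$; on a common partition I would take local liftings $(p_0,\dots,p_n)$, $(q_0,\dots,q_n)$ and put $u_i=q_i\,u|_{X_i}\,p_i$, so that $u_i^*u_i-p_i$ and $u_iu_i^*-q_i$ lie in $C(X_i)\otimes K$ and each $u_i(x)$ is Fredholm from $p_i(x)H$ to $q_i(x)H$; by Lemma \ref{L:const} its index is a constant $t_i$ on $X_i$. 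Then (i) is immediate, since $t_i=\rank p_i(x)-\rank q_i(x)$ when $p_i(x)$ (hence $q_i(x)$) has finite rank and $\infty=t_i+\infty$ otherwise; (ii) is exactly Equation (\ref{E:vonmurray}); and (iii) holds because at an infinite end‑point $u_i^*u_i-p_i$ and $u_iu_i^*-q_i$ vanish in norm, so $u_i(x)$ is invertible from $p_i(x)H$ onto $q_i(x)H$ for $x$ near the end and $t_i=0$ (the argument already used for Lemma \ref{L:welldefine2}).

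For \emph{sufficiency}, given local liftings and finite integers $t_i$ satisfying (i)--(iii), the aim is a representation $(u_0,\dots,u_n)$ with $u_i^*u_i-p_i,\ u_iu_i^*-q_i\in C(X_i)\otimes K$, vanishing in norm at any infinite end‑point, and $u_i(x_i)-u_{i-1}(x_i)\in K$; then $\mathbf u=\pi(u_0,\dots,u_n)$ is a partial isometry realizing $\mathbf p\sim\mathbf q$. On each $X_i$ with $t_i>0$ I would apply Lemma \ref{L:subprojection} to $p_i$ — legitimate because (i) gives $\rank p_i(x)=t_i+\rank q_i(x)\ge t_i$ — to obtain a norm‑continuous $r_i\le p_i$ of constant rank $t_i$; since $(p_i-r_i)H$ and $q_iH$ then have the same rank function over the contractible interval $X_i$, the classification of continuous fields of Hilbert spaces over a one‑dimensional space (the Appendix results, cf.\ their use via Theorem \ref{T:triviality} in the proof of Theorem \ref{T:main}) yields a strongly continuous $u_i$ on $X_i$ with $u_i^*u_i=p_i-r_i$ and $u_iu_i^*=q_i$; then $u_i^*u_i-p_i=-r_i$ is compact‑valued and $\Ind u_i(x)=t_i$. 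The case $t_i<0$ is symmetric (shrink $q_i$ by a rank‑$(-t_i)$ trivial subfield), and when $t_i=0$ — which by (iii) covers every $X_i$ with an infinite end‑point — the same classification gives $u_i$ with $u_i^*u_i=p_i$, $u_iu_i^*=q_i$ exactly, so nothing need vanish at infinity.

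It remains to match at the partition points, and this is where (ii) is used. At $x_i$ the operator $q_i(x_i)u_{i-1}(x_i)p_i(x_i)$, read from $p_i(x_i)H$ to $q_i(x_i)H$, is Fredholm of index $k_i+t_{i-1}-l_i$ — add the contributions $k_i$ from $p_{i-1}(x_i)|_{p_i(x_i)H}$, $t_{i-1}$ from $u_{i-1}(x_i)$, and $-l_i$ from $q_i(x_i)|_{q_{i-1}(x_i)H}$ — which by (ii) equals $t_i=\Ind u_i(x_i)$. Hence $u_i(x_i)^*q_i(x_i)u_{i-1}(x_i)p_i(x_i)$ is an index‑zero Fredholm operator on $p_i(x_i)H$, so a compact perturbation of a unitary $v$ of $p_i(x_i)H$; using triviality of the field $p_iH$ near $x_i$ and connectedness of the unitary group, I would extend $v$ to a strongly continuous path $v(t)$ of unitaries of $p_i(t)H$ with $v(x_i)=v$ and $v(x)=1$ for some $x\in X_i$, and replace $u_i$ by $u_iv(t)$ on $[x_i,x)$. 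This keeps $u_iu_i^*=q_i$, alters $u_i^*u_i$ only by a compact‑valued correction, and forces $u_i(x_i)-u_{i-1}(x_i)\in K$ because $u_i(x_i)v\approx q_i(x_i)u_{i-1}(x_i)p_i(x_i)\approx u_{i-1}(x_i)\pmod{K}$ — exactly the device in the proof of Theorem \ref{T:main}. Carrying this out successively over $x_1,\dots,x_n$, and once more at $x_0=0=1$ in the circle case, completes the construction.

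I expect the main obstacle to be carrying along the non‑zero indices $t_i$: condition (i) is precisely what lets Lemma \ref{L:subprojection} absorb each $t_i$ into an honest finite‑rank trivial subfield, and condition (ii) is precisely what keeps the transition defect at each $x_i$ compact \emph{and} of index zero after that absorption, so that the homotopy argument of Theorem \ref{T:main} still applies verbatim. The remaining points — the finite‑rank strata, where $\rank q_i$ may fail to be constant along $X_i$ but the compactness of $u_i(x_i)-u_{i-1}(x_i)$ is then automatic, and the minor extra step at $x_0$ in the circle case — are elementary, the subintervals $X_i$ being contractible throughout.
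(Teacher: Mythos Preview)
Your proposal is correct and follows essentially the same route as the paper's proof: for necessity you read off $t_i=\Ind u_i$ via Lemma~\ref{L:const} and Equation~(\ref{E:vonmurray}) exactly as the paper does, and for sufficiency you absorb each $t_i$ into a finite-rank trivial subfield of $p_i$ (or $q_i$) via Lemma~\ref{L:subprojection}, build $u_i$ from the field isomorphism, and then match at the $x_i$ by the unitary-path device of Theorem~\ref{T:main}. The only cosmetic difference is that you compute the transition index on $p_i(x_i)H$ and correct by a unitary of $p_i(t)H$, whereas the paper computes on the reduced $p_i'(x_i)H$; both bookkeepings yield index zero and the same perturbation.
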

    \begin{proof}
    ``only if'':
    For the first condition, observe that if  $u_i(x)$ is a Fredholm operator
    from $p_i(x)H$ to $q_i(x)H$ and $\rank(p_i(x))$ is
    finite, then $\rank(q_i(x))$ is also finite and
    $\Ind(u_i(x))=t_i=\rank(p_i(x))-\rank(q_i(x))$.
    Moreover, if
    $\rank(p_i(x))$ is infinite, by applying the same reasoning to $q_i(x)$,
    $\rank(q_i(x))$ is also infinite. Therefore,
    $\rank(p_i(x))=\rank(q_i(x))+ t_i$ holds.

    The second condition follows from Eq.
    (\ref{E:vonmurray}).

    ``if'': If $\rank(p_i(x))=\rank(q_i(x))+t_i$ and $t_i \geq 0$ for each $i$, then $\rank(p_i(x))\geq
    t_i$, and therefore there is norm continuous projection valued
    function $p \leq p_i$ such that $\rank(p(x))=t_i$. Consider
    $p_i'=p_i-p$. It is easily checked that
    $\rank(p_i'(x))=\rank(q_i(x))$. So there is again a (double) strongly
    continuous function $u_i$ on each $X_i$ such that ${u_i}^{\ast}u_i=p_i',
    u_i{u_i}^{\ast}=q_i$. Furthermore,
    \begin{align*}
    {u_i}^*u_i -p_i & \in C(X_i)\otimes K \\
    u_i{u_i}^* -q_i & \in C(X_i)\otimes K \\
    \end{align*}
    implies that $u_i(x)$ is a Fredholm operator  and
    $\Ind(u_i(x))=t_i$ for every $x \in X_i$. \\
    Similarly, we can construct such a $u_i$ for the case $t_i < 0$
    since we can have a perturbation $q_i'$ of $q_i$ with
    $\rank(q_i'(x))=\rank(p_i(x))$.\\
     Note that \begin{align*}
     \Ind(q_{i}(x_i)u_{i-1}(x_i)p_i'(x_i) &=
     -[q_i(x_i):q_{i-1}(x_i)]+ \Ind(u_{i-1}(x_i))+ [p_i'(x_i):p_{i-1}(x_i)]\\
                                             &= -l_i + t_{i-1} + k_i-t_i\\
                                             &=0
     \end{align*}
     and $q_{i}(x_i)u_{i-1}(x_i)p_i'(x_i)- u_{i-1}(x_i) \in
    K$.\\
    As we have seen before in Theorem \ref{T:main}, this implies
    that we can have a perturbation $(u_0',\cdots u_{n}')$ of
    $(u_0,\cdots,u_n)$ such that $u_i'(x_i) -u_{i-1}'(x_i) \in
    K$.
    \end{proof}

    Since $\mathbf{p} \sim_u \mathbf{q}$ is equivalent to $\mathbf{p} \sim
    \mathbf{q}$ and $\mathbf{1-p} \sim \mathbf{1-q}$, we get the
    following statement immediately from Proposition
    \ref{P:vonmurray}.
    \begin{cor}\label{C:unitaryequi}
    $\mathbf{p} \sim_u \mathbf{q}$ in $\mc{C}(I)$ if and only if there exist (finite) $t_i$'s, $s_i$'s, $k_i$'s, $l_i$'s such that 
    \begin{itemize} 
    \item[(i)]$\rank(1-p_i(x))=s_i+\rank(1-q_i(x))$,
    \item[(ii)]$\rank(p_i(x))=t_i+\rank(q_i(x))$,
    \item[(iii)]$t_i-t_{i-1}=k_i-l_i$,
    \item[(iv)] $s_i+t_i=s_{i-1}+t_{i-1}$,
    \item[(v)] $t_i$ and $s_i$ are zeros if $X_i$ contains
    infinite end-point
    \end{itemize}
    for some local liftings $(p_0, \cdots, p_n)$, $(q_o,\cdots,q_n)$
    of $\mathbf{p}$ and $\mathbf{q}$ respectively.
    \end{cor}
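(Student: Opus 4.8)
The plan is to deduce this from Proposition \ref{P:vonmurray} by applying that proposition twice, once to the pair $(\mathbf{p},\mathbf{q})$ and once to $(1-\mathbf{p},1-\mathbf{q})$, and then matching the two sets of data. The starting point is the elementary equivalence recalled just above: in the unital algebra $\mc{C}(I)$ one has $\mathbf{p}\sim_u\mathbf{q}$ if and only if both $\mathbf{p}\sim\mathbf{q}$ and $1-\mathbf{p}\sim 1-\mathbf{q}$ — given a unitary $\mathbf{u}$ with $\mathbf{u}\mathbf{p}\mathbf{u}^*=\mathbf{q}$ the partial isometries $\mathbf{u}\mathbf{p}$ and $\mathbf{u}(1-\mathbf{p})$ implement the two Murray--von Neumann equivalences, and conversely the sum of two partial isometries whose initial (resp.\ final) projections are orthogonal and add to $1$ is a unitary carrying $\mathbf{p}$ to $\mathbf{q}$.

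The bridge between the two pairs is the identity
\[
[1-a:1-b]=-[a:b]\qquad\text{whenever }a-b\in K,
\]
which is immediate from Proposition \ref{P:properties}(4) applied to the orthogonal decompositions $a+(1-a)=b+(1-b)=1$ together with $[1:1]=0$. Consequently, if $(p_0,\dots,p_n)$ and $(q_0,\dots,q_n)$ are local liftings of $\mathbf{p}$ and $\mathbf{q}$ over one fixed partition, then $(1-p_0,\dots,1-p_n)$ and $(1-q_0,\dots,1-q_n)$ are local liftings of $1-\mathbf{p}$ and $1-\mathbf{q}$ over the same partition, and the partition-point essential codimensions attached to them are $-k_i$ and $-l_i$, where $k_i=[p_i(x_i):p_{i-1}(x_i)]$ and $l_i=[q_i(x_i):q_{i-1}(x_i)]$.

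For the forward implication I would fix common local liftings $(p_i),(q_i)$ and apply the ``only if'' half of Proposition \ref{P:vonmurray} — whose proof accommodates an arbitrary choice of local liftings — to each of the two equivalences. The pair $(\mathbf{p},\mathbf{q})$ produces integers $t_i$ with $\rank(p_i(x))=t_i+\rank(q_i(x))$, with $t_i-t_{i-1}=k_i-l_i$, and with $t_i=0$ on any $X_i$ containing an infinite end-point; the pair $(1-\mathbf{p},1-\mathbf{q})$ produces integers $s_i$ with $\rank(1-p_i(x))=s_i+\rank(1-q_i(x))$, with $s_i-s_{i-1}=(-k_i)-(-l_i)=-(t_i-t_{i-1})$, i.e.\ $s_i+t_i=s_{i-1}+t_{i-1}$, and with $s_i=0$ on any $X_i$ containing an infinite end-point. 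These are exactly conditions (i)--(v). For the converse, given $t_i,s_i$ and local liftings satisfying (i)--(v), conditions (ii), (iii) and the vanishing of $t_i$ at infinite end-points are the hypotheses of the ``if'' half of Proposition \ref{P:vonmurray} for $(\mathbf{p},\mathbf{q})$, so $\mathbf{p}\sim\mathbf{q}$; rewriting (iv) as $s_i-s_{i-1}=-(t_i-t_{i-1})=-(k_i-l_i)=(-k_i)-(-l_i)$ and combining with (i) and the vanishing of $s_i$ gives the hypotheses of that half for $(1-\mathbf{p},1-\mathbf{q})$, so $1-\mathbf{p}\sim 1-\mathbf{q}$; hence $\mathbf{p}\sim_u\mathbf{q}$. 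The circle case is identical, all indices being read modulo $n+1$ with the extra partition point $x_0=x_{n+1}$.

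The entire argument is bookkeeping built on Proposition \ref{P:vonmurray}; the one point deserving care is the sign in transporting the $(\mathbf{p},\mathbf{q})$ index data to the $(1-\mathbf{p},1-\mathbf{q})$ index data, which is precisely what $[1-a:1-b]=-[a:b]$ controls and is what forces the fourth condition to come out in the symmetric form $s_i+t_i=s_{i-1}+t_{i-1}$.
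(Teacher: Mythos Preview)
Your proposal is correct and follows exactly the approach the paper indicates: the paper simply remarks that since $\mathbf{p}\sim_u\mathbf{q}$ is equivalent to $\mathbf{p}\sim\mathbf{q}$ together with $1-\mathbf{p}\sim 1-\mathbf{q}$, the corollary is immediate from Proposition~\ref{P:vonmurray}, and your argument is precisely a careful unpacking of that remark, including the sign identity $[1-a:1-b]=-[a:b]$ needed to see that condition~(iv) emerges in the stated form.
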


    In general, $\mathbf{p} \sim_h \mathbf{q}$ in $\mc{C}(I)$ if and only
    if
    $\mathbf{u}\mathbf{p}\mathbf{u}^*=\mathbf{q}$ where $\mathbf{u}$
    is connected to 1 in the unitary group of $\mathcal{C}(I)$. It is said that a (non-unital) $C\sp{*}$-algebra $I$ has \emph{good index theory} if whenever
    $I$ is embedded as an ideal of in a unital $C\sp{*}$-algebra $A$
    and $u$ is a unitary in $A/I$ such that $\partial_1([u])=0$ in
    $K_0(I)$, there is a unitary in $A$ which lifts $u$.
    Equivalently, if $u\in \mc{U}(A/I)$, $\alpha \in K_1(A)$, and
    $\alpha$ lifts $[u]$, then $u$ lifts to
    $\mc{U}(A)$ (see \cite[2--3]{brped}). It was proved that any stable rank one $C\sp{*}$-algebra has good
    index theory by G.Nagy \cite{na}. Since $I$ has stable rank one,
    $I$ has good index theory. Also, recall that the unitary group of the multiplier algebra of
    a stable $C\sp{*}$-algebra is path connected (even contractible). Thus $\mathbf{p} \sim_h \mathbf{q}$ if and only
    if
    $\mathbf{u}\mathbf{p}\mathbf{u}^*=\mathbf{q}$ where $\mathbf{u}$
    has trivial $K_1$-class.

    \begin{cor}
$\mathbf{p} \sim_h \mathbf{q}$ in $\mc{C}(I)$ if and only if there exist (finite) $t_i$'s, $s_i$'s, $k_i$'s, $l_i$'s such that 
    \begin{itemize}
    \item[(i)] $\rank(1-p_i(x))=s_i+\rank(1-q_i(x))$,
    \item[(ii)]  $\rank(p_i(x))=t_i+\rank(q_i(x))$,
    \item[(iii)]$t_i-t_{i-1}=k_i-l_i$,
    \item[(iv)] $s_i+t_i=s_{i-1}+t_{i-1}=0$,
    \item[(v)] $t_i$ and $s_i$ are zeros if $X_i$ contains
    infinite point
    \end{itemize}
    for some local liftings $(p_0, \cdots, p_n)$, $(q_0,\cdots,q_n)$
    of $\mathbf{p}$ and $\mathbf{q}$ respectively.
    \end{cor}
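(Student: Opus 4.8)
The plan is to reduce the statement to the unitary-equivalence criterion of Corollary~\ref{C:unitaryequi} together with one extra $K$-theoretic observation. Recall from the discussion preceding the statement that $\mathbf{p}\sim_h\mathbf{q}$ precisely when there is a unitary $\mathbf{u}\in\mc{C}(I)$ with $\mathbf{u}\mathbf{p}\mathbf{u}^{*}=\mathbf{q}$ and $[\mathbf{u}]_{1}=0$ in $K_{1}(\mc{C}(I))$. Since $I$ is stable, $K_{*}(M(I))=0$, so the index map $K_{1}(\mc{C}(I))\to K_{0}(I)$ is an isomorphism and $[\mathbf{u}]_{1}=0$ is equivalent to $\partial_{1}[\mathbf{u}]=0$; by Nagy's theorem that the stable rank one algebra $I$ has good index theory \cite{na}, this is in turn equivalent to $\mathbf{u}$ being liftable to a genuine unitary $w$ in $M(I)=C_{b}(X,B(H))$. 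So the task reduces to showing: $\mathbf{p}\sim_h\mathbf{q}$ if and only if some unitary $w\in C_{b}(X,B(H))$ satisfies $\pi(w)\mathbf{p}\pi(w)^{*}=\mathbf{q}$.

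For the ``only if'' direction I would fix such a $w$ and local liftings $(p_{0},\dots,p_{n})$ of $\mathbf{p}$, $(q_{0},\dots,q_{n})$ of $\mathbf{q}$. Since each $w(x)$ is a unitary, $(wp_{0}w^{*},\dots,wp_{n}w^{*})$ is again a local lifting of $\mathbf{q}$, hence differs from $(q_{0},\dots,q_{n})$ by norm-continuous compact-valued functions vanishing at any infinite endpoint. Put $t_{i}=[wp_{i}w^{*}(x):q_{i}(x)]$ and $s_{i}=[w(1-p_{i})w^{*}(x):1-q_{i}(x)]$, which are independent of $x\in X_{i}$ by Corollary~\ref{C:homotopy}, and $k_{i}=[p_{i}(x_{i}):p_{i-1}(x_{i})]$, $l_{i}=[q_{i}(x_{i}):q_{i-1}(x_{i})]$. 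Then (i) and (ii) are immediate from the definition of $[\,:\,]$ and the unitary-invariance of rank; (iv) follows by applying Proposition~\ref{P:properties}(4) to $wp_{i}w^{*}+w(1-p_{i})w^{*}=1$ and $q_{i}+(1-q_{i})=1$, which yields $t_{i}+s_{i}=[1:1]=0$; (iii) follows from Proposition~\ref{P:properties}(3) evaluated at $x_{i}$, comparing the triples $(wp_{i}w^{*},q_{i},q_{i-1})$ and $(wp_{i}w^{*},wp_{i-1}w^{*},q_{i-1})$ and using that conjugation by a unitary preserves $[\,:\,]$; and (v) holds because at an infinite endpoint $\|wp_{i}w^{*}-q_{i}\|<1$, forcing $t_{i}=0$ by Lemma~\ref{L:unitaryequi} and hence $s_{i}=-t_{i}=0$.

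For the ``if'' direction I would refine the construction behind Corollary~\ref{C:unitaryequi}. Conditions (i)--(v) imply its hypotheses (since $s_{i}+t_{i}=0=s_{i-1}+t_{i-1}$ gives $s_{i}+t_{i}=s_{i-1}+t_{i-1}$), so at least $\mathbf{p}\sim_u\mathbf{q}$; but I want an implementing unitary that lifts to $M(I)$. Using $s_{i}=-t_{i}$, for each $i$ I would choose \emph{one} norm-continuous projection-valued perturbation $p_{i}'$ of $p_{i}$ on $X_{i}$ with $\rank p_{i}'=\rank q_{i}$ and $\rank(1-p_{i}')=\rank(1-q_{i})$ — possible by Lemma~\ref{L:subprojection}: subtract a rank-$t_{i}$ subprojection of $p_{i}$ when $t_{i}>0$, add a rank-$(-t_{i})$ subprojection of $1-p_{i}$ when $t_{i}<0$, the needed rank bounds being supplied by (i) and (ii). Then, as in Proposition~\ref{P:vonmurray} and its $1-p$ analogue, triviality of continuous fields of Hilbert spaces over the $(\le 1)$-dimensional $X_{i}$ (Theorem~\ref{T:triviality}) produces double-strongly continuous partial isometries $v_{i},w_{i}$ on $X_{i}$ with $v_{i}^{*}v_{i}=p_{i}'$, $v_{i}v_{i}^{*}=q_{i}$, $w_{i}^{*}w_{i}=1-p_{i}'$, $w_{i}w_{i}^{*}=1-q_{i}$; after a further perturbation near each $x_{i}$, exactly as in the proof of Proposition~\ref{P:vonmurray} (the relevant Fredholm indices vanish by (iii) and (iv)), we may moreover assume $v_{i}(x_{i})-v_{i-1}(x_{i})\in K$ and $w_{i}(x_{i})-w_{i-1}(x_{i})\in K$. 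Because $p_{i}'\perp(1-p_{i}')$, $q_{i}\perp(1-q_{i})$, and the cross terms $v_{i}^{*}w_{i},v_{i}w_{i}^{*}$ vanish, the sum $u_{i}:=v_{i}+w_{i}$ is \emph{unitary}-valued on $X_{i}$, with $u_{i}p_{i}u_{i}^{*}-q_{i}\in C(X_{i})\otimes K$ (vanishing at infinite endpoints by (v)) and $u_{i}(x_{i})-u_{i-1}(x_{i})\in K$. Finally, at each finite $x_{i}$ the unitary $u_{i}(x_{i})^{*}u_{i-1}(x_{i})$ lies in $\mathcal{U}(1+K)$, which is path-connected; post-multiplying $u_{i}$ by a double-strongly continuous $\mathcal{U}(1+K)$-valued function equal to $u_{i}(x_{i})^{*}u_{i-1}(x_{i})$ at $x_{i}$ and to $1$ away from a neighborhood of $x_{i}$ keeps it unitary-valued, keeps $u_{i}p_{i}u_{i}^{*}-q_{i}\in C(X_{i})\otimes K$ (a $(1+K)$-conjugate of $p_{i}$ moves $p_{i}$ only by a compact), and makes all pieces match at every partition point ($x_{0}$ included in the circle case). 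The pieces then assemble to a genuine unitary $w'\in C_{b}(X,B(H))=\mathcal{U}(M(I))$ with $\pi(w')\mathbf{p}\pi(w')^{*}=\mathbf{q}$, so $\mathbf{p}\sim_h\mathbf{q}$.

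The step I expect to be the main obstacle is this ``if'' direction: it requires running the constructions of Proposition~\ref{P:vonmurray} and Corollary~\ref{C:unitaryequi} simultaneously with a shared perturbation $p_{i}'$ so that the two partial isometries have complementary initial spaces and their sum is unitary — the place where $s_{i}+t_{i}=0$, not merely constant, is genuinely used — and then observing that path-connectedness of $\mathcal{U}(1+K)$ lets one absorb the compact discontinuities at the partition points into a single global unitary lift, which is precisely why the homotopy condition upgrades the constant $s_i+t_i$ from an arbitrary value to $0$.
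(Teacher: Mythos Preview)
Your proposal is correct, and for the ``only if'' direction it is essentially the paper's argument in different dress: the paper defines $t_i,s_i$ as the Fredholm indices of the corners $q_i u p_i$ and $(1-q_i)u(1-p_i)$ of a lifted unitary $u$, while you define them as the essential codimensions $[wp_iw^{*}:q_i]$ and $[w(1-p_i)w^{*}:1-q_i]$; a two-line calculation with the defining isometries shows these coincide.

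The ``if'' direction, however, is genuinely different. The paper runs the Proposition~\ref{P:vonmurray} construction twice, once for $\mathbf{p}\sim\mathbf{q}$ (perturbing on the $p$-side) and once for $1-\mathbf{p}\sim 1-\mathbf{q}$ (perturbing on the $q$-side), obtaining partial isometries $v_i,w_i$ whose sum $u_i$ is \emph{not} a unitary in $B(H)$ but only a unitary in $\mc{C}(I)$; it then concludes by computing that the pointwise Fredholm index of $u_i$ is $t_i+s_i=0$, so $[\mathbf{u}]_1=0$ via the evaluation isomorphism $K_1(\mc{C}(I))\to\mathbb{Z}$. You instead use $s_i=-t_i$ to make a \emph{single} perturbation $p_i'$ serving both halves, so that your $u_i=v_i+w_i$ is an honest unitary on $X_i$, and then you upgrade the compact discrepancies $u_i(x_i)-u_{i-1}(x_i)\in K$ to exact equalities by post-multiplying with a path in the connected group $\mathcal{U}(1+K)$, assembling a global unitary in $M(I)$ directly. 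Your route is more constructive and bypasses the explicit $K_1$ computation; the paper's route is shorter because it outsources the liftability of $\mathbf{u}$ to good index theory rather than building the lift by hand. One small correction: when $\rank p_i'(x)=\rank q_i(x)$ is finite and possibly varying, the isomorphism of the associated fields comes from Proposition~\ref{P:isomorphic}, not Theorem~\ref{T:triviality}.
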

    \begin{proof}
    ``Only if'': Since a unitary $\mathbf{u}$ which has trivial class in
    $K_1(\mathcal{C}(I))$ can be lifted to a unitary $u$ in $M(I)$,
    there is a unitary-valued function $u$ in $I$ such that
    $up_iu^*-q_i \in C(X_i)\otimes K$ for each i. Hence $q_i(x)u(x)p_i(x)$ and $(1-q_i(x))u(x)(1-p_i(x))$
    are Fredholm operators
    from $p_i(x)H$  to  $q_i(x)H $ and from $(1-p_i(x))H$
    to $(1-q_i(x))H$ respectively. Using matrix decomposition of $u_i$ and the fact
    that $(1-q_i)up_i, q_iu_i(1-p_i) \in
    C(X_i)\otimes K$(vanishing at infinite end-points), if we let
    $t_i=\Ind(q_i(x)u(x)p_i(x))$ and $s_i=\Ind((1-q_i(x))u(x)(1-p_i(x))$
    we can deduce that
   \begin{align*}
    t_i+s_i &=\Ind(q_i(x)u_i(x)p_i(x))+\Ind((1-q_i(x))u_i(x)(1-p_i(x))\\
            &=0
    \end{align*}
    ``If'': As we have shown in the proof of
    Proposition \ref{P:vonmurray}, from conditions (i),(ii),(iii),(iv) we can construct a (double) strongly
    continuous function $v_i$ on $X_i$ such that ${v_i}^{\ast}v_i=p_i' \leq p_i,
    v_i{v_i}^{\ast}=q_i$ with $v_i(x_i)- v_{i-1}(x_i) \in K$ and  a (double) strongly
    continuous function $w_i$ on $X_i$ such that $w_i{w_i}^{\ast}=q_i' \leq 1-q_i,
    {w_i}^{\ast}w_i=1-p_i$ with $w_i(x_i)-w_{i-1}(x_i) \in K$.  Then
    $u_i=v_i+w_i$
    is a function such that $u_ip_iu_i^*-q_i \in
    C(X_i)\otimes K$. Then let $\mathbf{u}$ be the element represented by the local representation
    $(u_0,\cdots,u_n)$.   Since $[\mathbf{u}]_1=\Ind(v_i+w_i)=0$
    via the map $K_1(\mc{C}(I)) \to K_0(I) \to K_0(K)=\mb{Z}$ which is induced from the evaluation at a point,  
     $\mathbf{u}\mathbf{p}\mathbf{u}^*=\mathbf{q}$  where $\mathbf{u}$ has trivial $K_1$--class, i.e., $\mathbf{p} \sim_h \mathbf{q}$.
   \end{proof}
   We conclude this section by constructing a projection in $\mc{C}(I)$ which does not lift stably but its $K_0$--class lifts where $I$ is the suspension and showing some examples which illustrate that any two equivalence notions do not coincide.
   For the following examples we frequently use Proposition \ref{P:constructproj} and Corollary \ref{C:constructproj} to construct projection valued function with desired rank properties.
   \begin{ex}
   Consider a partition $\{x_1,x_2 \}$ in $(-\infty,\infty)$. In
   other words, we divide $ (-\infty,\infty)$ into three intervals $X_0,X_1,X_2$. We
   can construct projection valued functions $p_0$, $p_1$, $p_2$ corresponding to $X_0,X_1,X_2$ respectively such that
 \begin{align*}
 &\rank(p_0(x_1))=8, \\
 &\rank(p_1(x_1))=\rank(1-p_1(x_2))=5,\\
 &\rank(1-p_2(x_2))=2 \quad \text{where $p_1(x_2)\leq p_2(x_2)$}
 \end{align*}
   Then we can easily check that $k_1=-3$, $k_2=3$. Suppose
   $\mathbf{p}$ which is represented by above $(p_0,p_1,p_2)$ is
   liftable to a projection in $M(I)$. By Theorem \ref{T:main}, we
   must have $l_0,l_1,l_2$ such that $l_0=0=l_2$, $l_1-l_0=-k_1=3$.
   However, since $\rank(1-p_1(x_2))=2$, we have a restriction $l_1\leq
   2$. This is a contradiction to $l_1-l_0=l_1=3$. However, since $\sum_i k_i=0$, its $K_0$--class does lift. If we also consider $q=\overbrace{p\oplus\cdots\oplus p}^n$ for any $n$, $k_i$'s for $q$ are just $n$--times of the above $k_i$'s. Then the same reasoning shows that $q$ is not liftable but its $K_0$-class is liftable. Thus $p$ is a projection in $\mc{C}(I)$ which does not lift stably. Note that if we change $\mathbf{p}$ into $\mathbf{p}\oplus 0$, $k_i$'s are preserved. Since $\rank(1-p_1\oplus 1)=\rank(1-(p_1\oplus
   0))=\infty$, we have no restriction on choosing $l_1$ as 3. Thus
   $\mathbf{p}\oplus0$ is liftable. 
   \end{ex}
   \begin{ex}
   Let $x_0,x_1,x_2$ be points on the interior of the circle $\mb{T}$. If we
   construct projection valued functions $p_0$, $p_1$, $p_2$ such that
 \begin{align*}
 &\rank(1-p_0(x_1))=3, \\
 &\rank(1-p_1(x_1))=1 \quad \text{where $p_0(x_1)\leq p_1(x_1)$}\\
 &\rank(p_1(x_2))=2,\\
 &\rank(p_2(x_2))=6, \\
 &\rank(p_2(x)) \quad \text{for some $x \in (x_2,x_0)$},\\
 &\rank(1-p_2(x_0))=1 \quad \rank(1-p_0(x_0))=7 \quad \text{where $p_0(x_o)\leq p_2(x_0)$}.
 \end{align*}
 then we can see that $k_1=2$, $k_2=4$, $k_0=-6$. If $\mathbf{p}$ which is represented by above $(p_0,p_1,p_2)$ is
   liftable to a projection in $M(I)$, by Theorem \ref{T:main}, we
   must have $l_0,l_1,l_2$ such that 
   $l_1-l_0=-k_1=-2$, $l_2-l_1=-k_2=-4$, $l_0-l_2=-k_0=6$. Note that $l_0 \leq \rank(1-p_0(x_1)=3$, $-2 \leq \rank(p_2(x))\leq l_2$. 
   Thus $2 \leq l_1=4+l_2 = l_1=l_0-k_1\leq 1$ which is a contradiction. Thus, $\mathbf{p}$
   cannot be liftable. If we change $mathbf{p}$ into $\mathbf{p}\oplus 1$, then $k_i$'s are preserved. Since $\mathbf{p} \oplus 1$ has infinite rank, there are no restrictions on lower bounds of $l_i$'s. So if we choose, for example, $l_0=0$, $l_1=-2$, $l_2=-6$, then $\mathbf{p}\oplus 1$ is   
   liftable.
  \end{ex}
  
   \begin{ex}
   Consider projection valued functions $p=1$ and $q$ on $[0,1]$
   which satisfies
   $\rank(q(x))=1$. Then $\pi(p)=\mathbf{p}$ and
   $\pi(q)=\mathbf{q}$ has trivial $K_0$-class.
   But we cannot find $t < \infty$ such that $\rank(p(x))= \rank(q(x))+t$. This shows that $[\mathbf{p}]=[\mathbf{q}]$ does not imply $\mathbf{p} \sim \mathbf{q}$.
   \end{ex}
   
   \begin{ex}
   Even if we are given non-trivial $K_0$-data for
   $\mathbf{p}$ and $\mathbf{q}$, we can find a example such that
   $\mathbf{p} \nsim \mathbf{q}$. Consider $(p_0,p_1)$ and
   $(q_0,q_1)$ on $(-\infty,\infty)$ such that
   $\rank(p_1(x_1))-\rank(p_0(x_1))=k_1=l_1=\rank(q_1(x_1))-\rank(q_0(x_1))$
   but $\rank(p_i(x)) \ne \rank(q_i(x))$. Moreover, this type of
   example give us projections $\mathbf{p}$ and $\mathbf{q}$ such
   that $\mathbf{p} \nsim \mathbf{q}$ but $\mathbf{1-p} \sim
   \mathbf{1-q}$. Define $p_0$ on $(-\infty,x_1]$ such that
   $\rank(p_0(x))=1$ and $p_1$ on $[x_1,\infty)$ such that
   $\rank(p_1(x))=2$. Similarly, we define $q_0$ and $q_1$ such that
   $\rank(q_0(x))=2, \rank(q_1(x))=3$. Note that
   $\rank(1-p_i(x))=\rank(1-q_i(x))=\infty$ but
   $\rank(p_i(x))\ne\rank(q_i(x))$. Since $t_0=t_1=0=s_0=s_1$ for this case,
   by Proposition \ref{P:vonmurray}, $\mathbf{1-p} \sim
   \mathbf{1-q}$ but $\mathbf{p} \nsim \mathbf{q}$. Thus we conclude $1-\mathbf{p} \nsim_u
   1-\mathbf{q}$ by Corollary \ref{C:unitaryequi}.
   \end{ex}
   
   \begin{ex}
   In $[0,1]$, if one division point is given, we can take
   projection valued function
   $p_0$, $q_0$ on $[0,x_1]$ such that
   \begin{equation*}
   \begin{cases}
   \rank(p_0(0))&=1, \\
   \rank(q_0(0))& =0, \\
   \rank(p_0(x))=\rank(q_0(x)) &= \infty \quad \text{if $x \ne 0$,}\\
   \rank(1-p_0(x_1))=\rank(1-q_0(x_1)) &= 2
\end{cases}
   \end{equation*}
Also, we can construct $p_1$, $q_1$ on $[x_1,1]$ such that
   \begin{equation*}
   \begin{cases}
   \rank(1-p_1(1))=\rank(1-q_1(1))=0&, \\
   \rank(1-p_1(x))=\rank(1-q_1(x))=2& \quad \text{if $x \ne 1$,}\\
   p_1(x_1)=p_0(x_1)&,\\
   q_1(x_1)=q_0(x_1)&
   \end{cases}
   \end{equation*}

  Then, we have $k_1=l_1=0$, $t_0=1$, $s_1=0$, thus if we take
  $t_1=1$, $s_0=0$ then $\mathbf{p} \sim_u \mathbf{q}$ but $\mathbf{p}$
  cannot be homotopic to $\mathbf{q}$ since $t_1+s_1 \ne 0$.
   \end{ex}

\section{Appendix}\label{S:Appendix}
In this section, we show some results about continuous fields of Hilbert spaces which were used implicitly or explicitly throughout this article. We refer the reader to \cite{dix} or \cite{dixdou} for a complete introduction of this notion. Recall that a continuous fields of Hilbert spaces over a topological space $X$ is a family of Hilbert spaces $(H_x)_{x \in X}$ together with a continuous structure $\Gamma$ consisting of vector sections $\xi$ in the product space $\prod_{x\in X}H_x$ satisfying the following two conditions:
\begin{enumerate}
\item The norm $x \to \|\xi(x)\|$ is continuous on $X$ for each $\xi \in \Gamma$.
\item The set $\{\xi(x) \mid \xi \in \Gamma \}$ is norm dense in $H_x$ for each $x \in X$. 
\end{enumerate}
 If $H_x$ is the same Hilbert space $H$ for every $x$, and $\Gamma$ consists of all continuous
mappings of $X$ into $H$, $\mathcal{H}$ is called a constant field.
A field (isometrically) isomorphic to a constant field is said to be
trivial. If $\mathcal{H}^{'}=((H^{'}_x)_{x\in X}, \Gamma^{'})$ is a
continuous filed of Hilbert spaces over $T$, $H^{'}_x$ is a closed
subspace of $H_x$ for each $t$, and $\Gamma^{'} \subset \Gamma$,
then $\mathcal{H}^{'}$ is called a subfield of $\mathcal{H}$.
Furthermore, we call $\mathcal{H}^{'}=((H^{'}_x)_{x\in X},
\Gamma^{'})$ a complemented subfield of $\mathcal{H}=((H_x)_{x\in
X},\Gamma)$ if there is a subfield
$\mathcal{H}^{''}=((H^{''}_x)_{x\in X}, \Gamma^{''})$ such that
$H^{'}_x \oplus H^{''}_x=H_x$ for every $x$. Also, we say that $\mathcal{H}$
is separable if $\Gamma$ has a countable subset $\Lambda$ such that
$\{\xi(x)\mid \xi \in \Lambda \}$ is dense in
$H_x$ for each $x$.
 As we already mentioned, complemented subfields over $X$ are in one--to--one correspondence with strongly continuous projection valued functions from $X$ to $B(H)$.  Related to this, we note that an absorbtion type theorem. 
 
 \begin{thm}\cite[Lemma 10.8.7]{dix}\label{T:triviality}
If $X$ is paracompact and of finite dimension, every separable
continuous field $\mathcal{H}=((H_x)_{ x \in X}, \Gamma)$ of Hilbert
spaces over $X$ such that $\dim(H_x)=\infty$ for every $x$ is
trivial. Thus two continuous fields $\mathcal{H}$, $\mathcal{H}'$
of Hilbert spaces over $X$ such that
$\dim{H_x}=\dim{H_{x}'}=\infty$ are isomorphic.
\end{thm}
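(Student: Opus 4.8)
The statement is Dixmier's absorption (``triviality'') theorem for separable continuous fields of Hilbert spaces with infinite-dimensional fibers. My plan is to reduce triviality to the construction of a \emph{global orthonormal frame of sections}, to manufacture such a frame by patching local ones, and to use the finite covering dimension of $X$ to control the patching while exploiting the infinite-dimensionality of the fibers to kill the obstruction. For the reduction: if there exist sections $\xi_1,\xi_2,\dots\in\Gamma$ with $(\xi_k(x))_{k\ge 1}$ an orthonormal basis of $H_x$ for every $x$, then $\xi\mapsto(\langle\xi(x),\xi_k(x)\rangle)_k$ is an isomorphism of $\mathcal H$ onto the constant field $X\times\ell^2$, which is exactly triviality; and the last sentence of the theorem follows at once, both $\mathcal H$ and $\mathcal H'$ then being isomorphic to $X\times\ell^2$.

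Next I would establish the local version. Fix $x_0\in X$ and a finite orthonormal set $e_1,\dots,e_m\in H_{x_0}$. By the density axiom each $e_j$ is the value at $x_0$ of some section; applying the Gram--Schmidt process to these sections produces, wherever the relevant Gram determinant is nonzero, sections $\xi_1,\dots,\xi_m$ that are orthonormal there, and since that determinant is continuous in $x$ and equals $1$ at $x_0$, the $\xi_j$ are orthonormal on a whole neighborhood $U$ of $x_0$ with $\xi_j(x_0)=e_j$. Hence $\mathcal H$ contains, locally, trivial orthonormal subfields of arbitrary finite rank.

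The last step, passing from local frames to a global one, is the technical heart and the step I expect to be the main obstacle; it is where \emph{both} hypotheses are genuinely needed. Since $X$ is paracompact of finite covering dimension $d$, refine the ``good'' cover from the previous step to a locally finite open cover of multiplicity $\le d+1$, equip it with a subordinate partition of unity, and split it into $d+1$ families of pairwise disjoint open sets; over the union of one such family $\mathcal H$ has a genuine trivial orthonormal subfield. One then builds the global frame by a finite induction over these $d+1$ families: each time a new family is incorporated, the infinite-dimensionality of the fibers gives the room to rotate its orthonormal sections continuously into the orthogonal complement of what has already been built, so no rank is lost and the pieces agree on overlaps through the partition of unity. (Equivalently, one first shows --- exactly as for a finite-rank vector bundle over a finite-dimensional base --- that $\mathcal H$ is isomorphic to a \emph{complemented} subfield of $X\times\ell^2$, and then absorbs the complement using $\dim H_x=\infty$.) The reason there is nothing obstructing the patching is a Kuiper-type contractibility of the structure ``group'' of infinite-dimensional orthonormal frames; keeping the combinatorics of the cover finite is precisely what $\dim X<\infty$ buys, and the conclusion fails if either hypothesis is dropped.
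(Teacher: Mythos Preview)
The paper does not supply its own proof of this theorem; it is stated with a citation to Dixmier \cite[Lemma 10.8.7]{dix} and then \emph{used} (e.g., in the proof of Theorem~\ref{T:absorbtion}), so there is nothing in the paper to compare your argument against directly.

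That said, your outline is a reasonable sketch of how one proves Dixmier's result, though it is not quite Dixmier's own route. Dixmier does not organize the argument around a cover split into $d+1$ disjoint families and a ``rotation'' induction; rather, he builds a global orthonormal sequence $e_1,e_2,\dots$ of sections one at a time, the key point being that over a space of covering dimension $\le d$ any section of a field whose fibers have dimension $>d$ can be perturbed to a nowhere-vanishing one (a dimension-theory/general-position fact), after which one normalizes, passes to the orthogonal complement, and repeats. Infinite-dimensionality of the fibers guarantees the complement is again infinite-dimensional, so the recursion never stops. The paper's own Proposition~\ref{P:isomorphic} (with Lemma~\ref{L:sectionexist}) carries out exactly this recursive-frame construction in the special case $\dim X\le 1$, which is the only case the paper actually needs. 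Your patching scheme can be made to work, but the step you flag as ``the technical heart'' --- rotating the new family's frame into the orthogonal complement of what has been built --- is doing essentially the same work as Dixmier's perturbation lemma, and you would need to spell it out to have a proof rather than a plan. Also be careful with your parenthetical alternative: embedding $\mathcal H$ as a complemented subfield of a trivial field is easy, but ``absorbing the complement using $\dim H_x=\infty$'' is precisely the triviality theorem you are trying to prove (compare how the paper's Theorem~\ref{T:absorbtion} \emph{invokes} Theorem~\ref{T:triviality}), so that reformulation is circular as stated.
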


\begin{thm}[\cite{dixdou}]\label{T:absorbtion}
Let $X$ be paracompact space and $\mc{H}$  be a separable
continuous field of Hilbert spaces over $X$. Then it is isomorphic
to a complemented subfield of a trivial field, and thus is
isomorphic to a continuous field defined by a strongly continuous
projection valued function $p:X \mapsto B(H)$.
\end{thm}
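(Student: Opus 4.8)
The plan is to reduce the assertion to the triviality theorem for continuous fields of constant infinite dimension, Theorem~\ref{T:triviality}. The point is that any separable continuous field becomes trivial after \emph{adding a trivial field of infinite rank}, while a field is always a complemented subfield of itself together with any trivial field; combining these two observations gives the statement for free.

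So let $\mathcal{H}=((H_x)_{x\in X},\Gamma)$ be the given field and form the stabilized field
$$
 \widetilde{\mathcal{H}}\ :=\ \mathcal{H}\ \oplus\ \bigl(X\times\ell^{2}(\mathbb{N})\bigr),
$$
where $X\times\ell^{2}(\mathbb{N})$ denotes the constant field with all continuous $\ell^{2}$-valued maps as sections. Routinely, $\widetilde{\mathcal{H}}$ is again a separable continuous field over $X$ (a countable total family is obtained by pairing those of the two summands), and every fibre $H_x\oplus\ell^{2}(\mathbb{N})$ has dimension $\aleph_{0}$. Moreover $\mathcal{H}\oplus\{0\}$ is a subfield of $\widetilde{\mathcal{H}}$ whose complementary subfield is $\{0\}\oplus\bigl(X\times\ell^{2}(\mathbb{N})\bigr)$, so $\mathcal{H}$ is already realized as a complemented subfield of $\widetilde{\mathcal{H}}$; everything now rests on showing $\widetilde{\mathcal{H}}$ is trivial.

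For that I would apply Theorem~\ref{T:triviality} to $\widetilde{\mathcal{H}}$: it is a separable continuous field over the paracompact space $X$ all of whose fibres have the same infinite dimension, hence is isomorphic to a constant field $X\times H$ with $H$ separable. (For the base spaces $[0,1],[0,\infty),(-\infty,\infty),\mathbb{T}$ relevant to this paper, which are one-dimensional, this is Theorem~\ref{T:triviality} verbatim; for an arbitrary paracompact base one invokes the corresponding result of \cite{dixdou}.) Fixing such an isomorphism $\Psi\colon\widetilde{\mathcal{H}}\to X\times H$ and transporting the constant projection field $x\mapsto 1_{H_x}\oplus 0$ through $\Psi$, we obtain a strongly continuous projection-valued function $p\colon X\to B(H)$ whose associated complemented subfield $\mathcal{H}_p$ of $X\times H$ is, by construction, isomorphic to $\mathcal{H}\oplus\{0\}\cong\mathcal{H}$; and since complemented subfields of $X\times H$ correspond exactly to strongly continuous projection-valued functions on $X$, this also gives the last clause of the statement. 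The only substantive step is Theorem~\ref{T:triviality}; all the rest is formal bookkeeping with continuous fields from \cite{dix}, so the genuine obstacle is precisely the triviality theorem, which is why it is isolated above rather than reproved here.
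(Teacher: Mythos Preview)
Your proposal is correct and follows essentially the same approach as the paper: stabilize $\mathcal{H}$ so that every fibre becomes infinite-dimensional and then invoke Theorem~\ref{T:triviality}. The only cosmetic difference is that the paper adds the infinite direct sum $\mathcal{H}^{(\infty)}$ (which is itself trivial by the same theorem) rather than the constant field $X\times\ell^{2}(\mathbb{N})$; your choice is arguably cleaner since it sidesteps a separate triviality check for the added summand and works even at points where $H_x=0$.
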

\begin{proof}
Since $\mathcal{H}^{(\infty)}$ is a trivial filed of infinite rank, then
$\mc{H}\oplus\mc{H}^{(\infty)}\cong \mc{H}^{(\infty)}$ by Theorem \ref{T:triviality}.
\end{proof}

Inspired by the above result, we want
to get a similar statement (Proposition \ref{P:isomorphic}) for two
continuous fields of Hilbert spaces whose fiber is possibly finite
dimensional.

 Given any continuous field $\mc{H}=((H_x)_{x\in X},\Gamma)$ and
any closed subset $A$ of $X$, we can get a continuous subfield
$\mc{H}^0$ such that
\begin{equation*}
H^0_x=\begin{cases}
 H_x, x \notin A\\
 0,  \, x \in A
 \end{cases}
 \end{equation*}

 Moreover, if $\mc{H}$ is separable and $X$ is separable metric space,
 then $\mc{H}^0$ is also separable. In fact, if $f$ is a section
 such that $f(x)=0$ for $x \in A$, for every $x \in X$, and
 $\epsilon>0$, there is a continuous section $g$ of $\mc{H}$ such that $\|f-g\|
 <\epsilon$ for some neighborhood $O$ of $x$. If we choose $\phi$ such that $\phi|_A=0$ and $\|\phi g-g \|<\epsilon$
 on $O$, it follows that $\|f-\phi g\| < 2\epsilon$. Hence $ \{\phi g \mid g \in \Gamma, \quad
 \phi:T\mapsto \mathbb{C} \,\,\text{such that}\,\, \phi|_A=0 \}$ define
 continuous vector fields of $\mc{H}^0$.

 The following Lemma \ref{L:sectionexist} and Proposition
 \ref{P:isomorphic} may be known by experts, but
 we shall give our proofs of these for the convenience of readers.
 Rather introducing a new notation, we also denote by $\dim$ the covering dimension of a topological
 space when it makes no confusion with the (linear) dimension of a
 Hilbert space.

\begin{lem}\label{L:sectionexist}
If $X$ is a separable metric space such that $\dim X \leq 1$, if
$\mathcal{H}$ is a continuous field of Hilbert spaces over $X$ such
that $H_x\ne0 $ for every $x\in X$, if $f$ is a continuous vector
field of $\mathcal{H}$, and if $\epsilon > 0$, then there is a
continuous vector field $g$ such that $\|g(x)-f(x)\|<\epsilon$ and
$g(x)\ne 0$ for every $x\in X$.
\end{lem}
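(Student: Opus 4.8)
The plan is to build the nonvanishing section $g$ by working over a suitable countable closed cover of $X$ and patching local corrections, using the hypothesis $\dim X \le 1$ to control how much fibrewise ``room'' is needed at each step. First I would fix a locally finite countable closed cover $\{F_k\}_{k\ge 1}$ of $X$ by sets each of which is small enough that $\mathcal H$ admits, over a neighborhood of $F_k$, finitely many continuous vector fields $\xi^{(k)}_1,\dots,\xi^{(k)}_{m_k}$ whose values span $H_x$ at each point; such local trivializing data exist by the definition of a continuous field together with a compactness/Lindel\"of argument on the separable metric space $X$. The point of passing to finitely many spanning sections is that on each $F_k$ the problem becomes a genuinely finite-dimensional (indeed, at most one-dimensional base) selection problem: we must perturb $f$ to avoid the single ``bad'' value $0$ in the fibre.

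The heart of the argument is the local perturbation step. Suppose $g$ has been arranged to be nonvanishing on $F_1\cup\dots\cup F_{k-1}$ and is within $\epsilon$ of $f$ everywhere; I want to modify $g$ on a neighborhood of $F_k$ so that it becomes nonvanishing on $F_k$ as well, without disturbing it on the previous sets and without moving it by more than a controlled extra amount (summable over $k$, so the total perturbation stays below $\epsilon$). Here is where $\dim X\le1$ enters: the set $Z = \{x : g(x)=0\}$ on which we still need to push $g$ off zero is a closed subset of $F_k$, and over a neighborhood of it we may use one of the spanning sections $\xi^{(k)}_j$ together with the fact that, since $\dim X \le 1$, a generic affine combination $g + t\,\phi\,\xi^{(k)}_j$ — with $\phi$ a Urysohn function that is $1$ near $Z$ and $0$ off the neighborhood, and $t$ a small scalar — misses $0$: the ``obstruction set'' where $g(x) + t\phi(x)\xi^{(k)}_j(x)$ can vanish has, for fixed generic $t$, empty interior, and more precisely a Baire-category / transversality count shows that for all but a measure-zero set of $t\in(0,\delta)$ the perturbed section is zero-free on $F_k$. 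One must also insure that points of $F_k$ already lying in the previously-handled sets are not pushed back to zero; this is automatic if the support of $\phi$ is chosen disjoint from $F_1\cup\dots\cup F_{k-1}$ wherever $g$ is already nonzero, which is possible because $Z$ is disjoint from those sets by the inductive hypothesis. The continuity of the modified section is immediate since $\phi$, $\xi^{(k)}_j$, and $g$ are all continuous and the norm $x\mapsto\|\cdot\|$ is continuous on a continuous field.

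Having performed this step for every $k$, local finiteness of $\{F_k\}$ guarantees that near each point only finitely many corrections are ever applied, so the limiting section $g$ is well-defined, continuous, lies in $\Gamma$, satisfies $\|g(x)-f(x)\| < \epsilon$ by the summability of the correction sizes, and is nowhere zero since every point lies in some $F_k$ and is zero-free after the $k$-th step (and stays zero-free thereafter).

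The step I expect to be the main obstacle is making the ``generic scalar $t$ works'' claim rigorous and uniform: one needs that the bad parameter set $\{t : \exists\, x\in F_k,\ g(x)+t\phi(x)\xi^{(k)}_j(x)=0\}$ is genuinely negligible, and for this the one-dimensionality of $X$ is essential — over a higher-dimensional base a single scalar parameter is not enough to clear the zero set and one would need a vector of parameters, i.e. several spanning sections simultaneously, with a correspondingly more delicate dimension count. Packaging this cleanly (for instance via a covering-dimension argument showing the image of $F_k$ under $x\mapsto$ ``the $t$ that kills $x$'' is a countable union of sets of dimension $\le1$ inside $\mathbb R$, hence has empty interior, or alternatively via an explicit partition of $F_k$ into finitely many pieces on each of which a fixed $\xi^{(k)}_j$ is bounded away from the span of $g$) is the technical crux; everything else is bookkeeping with Urysohn functions and summable error terms.
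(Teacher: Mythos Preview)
Your overall architecture---a locally finite closed cover $\{F_k\}$, an induction that fixes up the section on one $F_k$ at a time, Urysohn cutoffs to localize the modification, and local finiteness to guarantee the limit is continuous---matches the paper's scaffolding exactly. The divergence, and the genuine gap, is in the step where $\dim X\le 1$ is actually used.

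Your proposed mechanism is a ``generic scalar $t$'' perturbation $g\mapsto g+t\,\phi\,\xi$: you want the bad set $\{t:\exists\,x\in F_k,\ g(x)+t\phi(x)\xi(x)=0\}$ to be negligible. But this set is the image of (a subset of) $F_k$ under the continuous map $x\mapsto -g(x)/(\phi(x)\xi(x))$ (in a local trivialization), and continuous maps can \emph{raise} covering dimension: a Peano-type map from a one-dimensional space can have image with nonempty interior in $\mathbb{C}$. So $\dim F_k\le 1$ gives you no control over the dimension or category of that image, and there is no reason any small $t$ should avoid it. The same objection kills the variant you mention (``the image is a countable union of sets of dimension $\le 1$, hence has empty interior''): in $\mathbb{R}$ a one-dimensional set certainly need not have empty interior, and in $\mathbb{C}$ you cannot bound the dimension of the image without smoothness. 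Your second alternative---partitioning $F_k$ into pieces on which some $\xi_j$ is ``bounded away from the span of $g$''---also breaks down when the fibre is one-dimensional, since then every nonzero vector is in the span of every other. (There is a smaller issue too: you ask for finitely many sections spanning each fibre, but the fibres may be infinite-dimensional.)

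The paper uses the one-dimensionality in a different and more robust way. Over each $F_n$ it fixes an orthogonal splitting $\mathcal{H}|_{F_n}=\mathcal{L}_n\oplus K_n$ with $\mathcal{L}_n$ a trivial line subfield, handles the $K_n$-component by soft extension arguments, and then reduces the nonvanishing problem to the $\mathcal{L}_n$-component, i.e.\ to a single $\mathbb{C}$-valued function $k$. At that point it invokes the standard dimension-theoretic extension property: since $\dim X\le 1$, a continuous map from a closed subset $E$ into $\mathbb{C}\setminus\{0\}\simeq S^1$ extends over the ambient set $G$ to a map still landing in $\mathbb{C}\setminus\{0\}$. Equivalently, $0$ is an unstable value for any continuous map from a $\le 1$-dimensional space into $\mathbb{R}^2$. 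This is exactly the characterization of covering dimension $\le 1$, and it produces the required nonvanishing perturbation without any transversality or Sard-type reasoning. If you rewrite your inductive step to invoke that extension property on the line-bundle component rather than hunting for a generic constant $t$, the rest of your outline goes through.
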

\begin{proof}
 There is a countable open cover $\{U_n\}$ of $X$ such
that each $\mathcal{H}|_{U_n}$ has a non-vanishing section. By
paracompactness of $X$ there is a closed, locally finite refinement,
$\{F_n\}$, for $\{U_n\}$. By construction, for each $n$ we have
\begin{equation}\label{E:eqtn1}
\mathcal{H}|_{F_n}=\mathcal{L}_n \oplus K_n
\end{equation}
where $\mathcal{L}_n$ is a trivial subfield of rank one. Choose a
strictly increasing sequence $\{\epsilon_n\}$ of positive numbers
such that $\epsilon_n <\epsilon $. We will recursively construct
compatible sections $g_n$ over $F_n$ such that $g_n(x)\ne0$  and
$\|g_n(x)-f(x)\|\leq \epsilon_n$ for every $x \in F_n$. The local
finiteness ensures that the resulting global section $g$ is
continuous on $X$.\\
 To construct $g_n$, let
$A=F_n\cap(\bigcup_{k=1}^{n-1}F_k)$($A=\varnothing$
 if $n=1$). We have a non-vanishing section $g'$ on $A$ such that
 $\|g'(x)-f(x)\|<\epsilon_{n-1}$ for every $x\in A$, and we wish
 to extend $g'$ to $F_n$. Choose $\epsilon'$ and
 $\epsilon^{''}$ so that
 $\epsilon_{n-1}<\epsilon'<\epsilon^{''}<\epsilon_n$, and write
 $f=f^{1}\oplus f^{2}$, $g'=g^{1}\oplus g^{2}$ relative to the
 decomposition (\ref{E:eqtn1}).\\
 We first extend $g^2$ to all of $F_n$ so that $\|g_2(x)-f^2(x)\|\leq
 \epsilon'$ for every $x\in X$. To do this, let $h$ be an arbitrary
 extension of $g^2$ to $F_n$, which exists by Proposition 7 in
 \cite{dixdou}. Then let $B=\{x\in F_n\mid \|h(x)-f^2\|\geq
 \epsilon'\}$, and note that $A \cap B=\varnothing$. Let $\phi:F_n \mapsto
 [0,1]$ be a continuous function such that $\phi|_B = 1$ and
 $\phi|_A=0$, and take $g^2=\phi f^2+(1-\phi)h$.\\
 Next, we extend $g^1$ to a section $k$ on all of $F_n$ so that
 $\|k(x)-f^1(x)\|^2 + \|g^2(x)-f^2(x)\|^2 \leq (\epsilon^{''})^2$.
 It will be convenient to identify sections of $\mathcal{L}_n$ with
 complex valued functions and define
 \begin{equation*}
 \phi(s,z)=\begin{cases}
           z, \quad\text{if $|z|\leq s$} \\
           s\frac{z}{|z|}, \, \text{if $|z|>s$}
 \end{cases}
 \end{equation*}
 for $s>0$ and $z\in \mathbb{C}$. Thus $\phi$ is continuous on
 $(0,\infty)\times\mathbb{C}$. Now extend the function $g^1-f^1$ to
 $l$ on $F_n$ and let
 $k(x)=f^1(x)+\phi(\sigma(x),l(x))$, where $\sigma(x)=((\epsilon^{''})^2-\|g^2(x)-f^2(x)
 \|^2)^{1/2}$.\\
 Finally, we must modify $k$ to obtain the non-vanishing property
 without changing $k|_A$. Let $C={x\in F_n \mid g^2(x)=0}$, $D={x\in C \mid
 k(x)=0}$, and $\delta=\frac{\epsilon_n-\epsilon^{''}}{2}$. Since $D \cap
 A=\varnothing$, there is an open neighborhood $V$ of $D$ such that
 $\overline{V}\cap A=\varnothing $. Let $G=\overline{V}\cap C$ and
 $E=((\overline{V}-V)\cap C) \cup \{x\in G \mid \|k(x)\|=\delta \} $
By dimension theory, there is a non-vanishing continuous function
$r$ on $G$ such that $r|_E=k|_E$. Then  define $g^1|_C(x)$ by
\begin{equation*}
g^1(x)=\begin{cases}
    \phi(\delta,r(x)) \quad \text{if $x\in V\cap C$ and
    $\|k(x)\|<\delta$}\\
    k(x) \quad \text{otherwise on $C$}
\end{cases}
\end{equation*}
Note that if $x_n \in V \cap C$, $\|k(x_n)\| < \delta$ and if $x_n
\to x $ for some $x$ not satisfying these conditions, then
$r(x)=k(x)$ and $\|k(x)\|=\delta$ Thus $$g^1(x_n) \to
\phi(\delta,r(x))=k(x)$$ This implies $g^1$ is continuous on $C$ and
$\|g^1-k \| \leq 2\delta$.\\
Now $g^1$ is defined on $C \cup A$, and we extend $g^1$ to $F_n$ so
that $\|g^1(x)-k(x) \|< 2\delta$ for every $x \in F_n$. This can be
done as in the previous paragraph. Then $g_n=g^1 \oplus g^2$
satisfies all required properties.
\end{proof}

\begin{prop}\label{P:isomorphic}
If $X$ is a separable metric space such that $\dim X \leq 1$, and if
$\mathcal{H}$ and $K$ are separable continuous fields of
Hilbert spaces over $X$ such that $\dim H_x =\dim K_x$ for every $x
\in X$, then $\mathcal{H} \cong K$.
\end{prop}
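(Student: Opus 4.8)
The plan is to break each of $\mc{H}$ and $K$ into a countable orthogonal sum of trivial rank-one fields, to match these summands one by one, and then to reassemble. For $n\geq1$ set $U_n=\{x\in X:\dim H_x\geq n\}$. Since $n$ sections of $\mc{H}$ that are linearly independent at a point stay so on a neighborhood, $x\mapsto\dim H_x$ is lower semicontinuous, so $U_1\supseteq U_2\supseteq\cdots$ are open; each $U_n$, being a subspace of $X$, is separable metric of covering dimension $\leq1$, and the hypothesis $\dim H_x=\dim K_x$ means the same sets arise for $K$.

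First I would extract the summands. Over $U_1$ all fibers are nonzero, so Lemma \ref{L:sectionexist} (applied, say, to the zero section) produces a nowhere-vanishing continuous section which, after normalizing, spans a trivial rank-one subfield $\mc{L}_1^{\mc{H}}\subseteq\mc{H}|_{U_1}$; its orthogonal complement is again a separable subfield (it is the field of $1$ minus the strongly continuous projection onto that line), with fiber dimension $\dim H_x-1$, hence supported on $U_2$. Restricting to $U_2$ and iterating exactly as in the proof of Corollary \ref{C:subprojection} yields trivial rank-one fields $\mc{L}_n^{\mc{H}}$ over $U_n$. Writing $\mc{F}^0$ for the extension by zero to all of $X$ of a field $\mc{F}$ living over the open set $U_n$ (the construction recalled before Lemma \ref{L:sectionexist}, which stays separable over a separable metric base), this gives a fibrewise-orthogonal decomposition $\mc{H}\cong\bigoplus_{n\geq1}(\mc{L}_n^{\mc{H}})^0$, implemented by $\xi\mapsto(P_n\xi)_n$ with $P_n$ the projection onto $\mc{L}_n^{\mc{H}}$; here $\sum_n\|P_n\xi(x)\|^2=\|\xi(x)\|^2$, so $(P_n\xi)_n$ really is a section of the sum. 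Running the same construction for $K$ produces $\mc{L}_n^K$ with $K\cong\bigoplus_{n\geq1}(\mc{L}_n^K)^0$.

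To conclude, fix $n$. Both $\mc{L}_n^{\mc{H}}$ and $\mc{L}_n^K$ are trivial rank-one fields over $U_n$, hence isomorphic, and since extension by zero is functorial the isomorphism prolongs to $(\mc{L}_n^{\mc{H}})^0\cong(\mc{L}_n^K)^0$ over $X$. A fibrewise isometry preserves the functions $x\mapsto\sum_n\|\eta_n(x)\|^2$ governing the continuity structures of the two direct sums, so the family of these isomorphisms assembles to $\bigoplus_n(\mc{L}_n^{\mc{H}})^0\cong\bigoplus_n(\mc{L}_n^K)^0$, whence $\mc{H}\cong K$.

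The step I expect to be the real obstacle is that the direct sum is genuinely infinite as soon as some fiber is infinite-dimensional — and even with all fibers finite-dimensional their dimensions need not be locally bounded — so one must set up the countable orthogonal sum of continuous fields (taking its continuity structure to consist of the sequences of sections $(\eta_n)$ for which $x\mapsto\sum_n\|\eta_n(x)\|^2$ is finite and continuous) and verify that it is again a separable continuous field; this is essentially contained in \cite[Chap.\ 10]{dix}. One then checks the recursively produced $(P_n\xi)_n$ belongs to it, which reduces to uniform-on-compacta convergence of the partial sums $\sum_{n\leq N}P_n\xi$ and follows from $\sum_n\|P_n\xi(x)\|^2=\|\xi(x)\|^2$; and on the $G_\delta$ locus $\bigcap_nU_n$ of infinite-dimensional fibers the decomposition must be consistent with Theorem \ref{T:triviality}, which it is, the sum of infinitely many trivial rank-one fields there being infinite-dimensional.
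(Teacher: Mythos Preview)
Your overall strategy---show that both $\mc{H}$ and $K$ are isomorphic to the same ``standard'' field $\bigoplus_n \mc{L}_n$ with $\mc{L}_n$ the trivial line over $U_n$ extended by zero---is exactly the paper's strategy. The gap is in the claimed Parseval identity $\sum_n\|P_n\xi(x)\|^2=\|\xi(x)\|^2$. This holds if and only if the orthonormal system $\{e_n(x)\}_{n\geq1}$ is \emph{complete} in $H_x$, and your construction does nothing to force completeness at points where $\dim H_x=\infty$. Applying Lemma~\ref{L:sectionexist} to the zero section merely produces \emph{some} nowhere-vanishing section on $U_n$; iterating yields an orthonormal sequence, but at an infinite-dimensional fiber the closed span of the $e_n(x)$ could be a proper subspace of $H_x$, in which case $\xi\mapsto(P_n\xi)_n$ is not an isometry and the decomposition fails. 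Your final remark that on $\bigcap_n U_n$ the direct sum is infinite-dimensional and hence ``consistent with Theorem~\ref{T:triviality}'' does not repair this: knowing two fields over that locus are both infinite-dimensional gives an abstract isomorphism between them, not the particular map you wrote down, and it does not glue with what happens on the rest of $X$.

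The paper fixes precisely this point. One chooses in advance a sequence $\{f_m\}$ of sections with $\overline{\Span\{f_m(x)\}}=H_x$ for every $x$, relabels it as $\{g_n\}$ so that each $f_m$ recurs infinitely often, and then at stage $n$ applies Lemma~\ref{L:sectionexist} not to the zero section but to the projection of $g_n$ onto the current remainder $\mc{H}_{n-1}'$, with tolerance $1/n$. This forces the $\mc{H}_n'$-component of every $f_m$ to have norm at most $1/n$ along a subsequence of $n$'s tending to infinity, hence each $f_m(x)$ lies in $\overline{\Span\{e_n(x)\}}$, and completeness follows. You should incorporate this approximation step; once you do, the rest of your outline goes through.
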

\begin{proof} For $n=1,\cdots,$ let $U_n=\{x\in X \mid \dim H_x \geq
n \}$, an open set. Let $\mathcal{L}_n$ be a trivial line bundle
over $U_n$, extended by zero as as to be a continuous field over
$X$. Thus continuous sections of $\mathcal{L}_n$ can be identified
with continuous complex-valued functions on $X$ which vanished on $X
\smallsetminus U_n$. We will show that $\mathcal{H} \equiv
\bigoplus_1^{\infty} \mathcal{L}_n $. Since the same argument
applies to $K$, the result follows.

To do this we construct recursively a sequence $\{e_n\}$ such that
$e_n$ is a continuous section of $\mathcal{H}|_{U_n}$, such that
$$\|e_n(x)\|=1 \quad \text{for every $x \in U_n$}, <e_n(x),e_m(x)>=0 \quad \text{if
$n<m$ and $x \in U_m$}.$$ We will impose additional conditions on
the $e_n$'s, but first we point out that any such $e_n$'s give rise
to complemented subfield $\mathcal{M}_n$, where
$(\mathcal{M}_n)_x=\Span(e_n(x))$. Moreover if we write
$\mathcal{H}=\mathcal{H}_n'\oplus \mathcal{M}_1\oplus \cdots
\oplus \mathcal{M}_n$, then $\dim (H_n')_x= \maxi(\dim H_x-n,0)$.
It is enough to consider the case $n=1$. Note that for any
continuous section $f$ of $\mathcal{H}$, we have the function $c_1$
defined by \begin{equation*}
 c_1(x)=\begin{cases}
    <f(x),e_1(x)> &, \quad x \in U_1 \\
    0              &, \quad x \notin U_1
 \end{cases}
\end{equation*}
is continuous, since $| <f(x),e_1(x)> | \leq \| f(x)\|$ and $f$ is
vanishes on $X-U_1$. Thus $g = f- c_1 e_1$ may be regarded as a
continuous section of $\mathcal{H}$ such that
$\|g(x)\|^2=\|f(x)\|^2-\|c_1(x)\|^2$, this implies that
$\mathcal{H}_1'=\mathcal{M}_1^{\perp}$ is indeed a continuous
subfield of $\mathcal{H}$. The dimension formula given above is
obvious, and it follows that $\{x\mid \dim (\mathcal{H}_1')_x
\geq n \}=U_{n+1}$. Now we proceed by induction on $n$, working with
$\mathcal{H}_1'$ instead of $\mathcal{H}$.

Let $\{f_m \}$ be a sequence of continuous sections of $\mathcal{H}$
such that $\mathcal{H}_x= \overline{\Span (f_n(x))}$ for each $x$.
Let $\{ g_n \}$ be a sequence which includes each $f_m$ infinitely
many times. We choose the $e_n$'s so that for each $n$ and $x$, the
projection of $g_n(x)$ on $(\mathcal{H}_n')_x$ has norm at most
$1/n$. If this is so, then $\mathcal{H}\cong
\bigoplus\mathcal{M}_n$; and since $\mathcal{M}_n\cong
\mathcal{L}_n$, the result follows.

Assume $e_k$ has already been constructed for $k < n$. Let $h$ be
the $\mathcal{H}_{n-1}'$ component of $g_n$. Apply Lemma
\ref{L:sectionexist} to $\mathcal{H}_{n-1}'|_{U_n} $ and
$h|_{U_n}$ with $\epsilon=1/n$.(Recall that $U_n=\{ x \mid
(H_{n-1}')_x \ne 0\}$.) Thus we obtain a non-vanishing section
$l$ on $U_n$, such that $\|h(x)-l(x) \| \leq 1/n$ for every $x$. If
$e_n(x)=\dfrac{l(x)}{\|l(x)\|}$, then $e_n$ satisfies all our
requirements.
\end{proof}

\begin{cor}\label{C:subprojection}
If $X$ is a separable metric space such that $\dim(X) \leq 1$ and
$\mathcal{H}$ is a continuous field of Hilbert spaces over $X$ such
that $\dim (H_x) \geq n $ for every $x \in X$, then $\mathcal{H}$
has a trivial subfield of rank n. Equivalently, if $p$ is a strongly
continuous projection valued function on $X$ such that
$\rank(p(x))\geq n$ for every $x \in X$, then there is a norm
continuous projection valued function $q$ such that $q \leq p$ and
$\rank(q(x))=n$ for every $x \in X$.
\end{cor}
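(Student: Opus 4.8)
The plan is to extract the statement from the construction carried out in the proof of Proposition \ref{P:isomorphic}. There one builds recursively, for a separable continuous field $\mc{H}$, a sequence of sections $e_1, e_2, \dots$ satisfying $\|e_k(x)\|=1$ for $x \in U_k := \{x \in X \mid \dim H_x \geq k\}$ and $\langle e_k(x), e_m(x)\rangle = 0$ whenever $k<m$ and $x \in U_m$, and one shows that the associated complemented line subfields $\mc{M}_k$ (with $(\mc{M}_k)_x = \Span(e_k(x))$ on $U_k$ and $0$ elsewhere) are trivial and satisfy $\mc{H} \cong \bigoplus_{k\geq 1}\mc{M}_k$. The point is that the hypothesis $\dim H_x \geq n$ for \emph{every} $x$ forces $U_1 = U_2 = \cdots = U_n = X$, so that $e_1, \dots, e_n$ are globally defined, everywhere of norm one, and mutually orthogonal. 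Hence the subfield $\mc{H}' := \mc{M}_1 \oplus \cdots \oplus \mc{M}_n$ has $(\mc{H}')_x = \Span\{e_1(x), \dots, e_n(x)\}$, which is $n$-dimensional for every $x$, and it is trivial, being a finite direct sum of trivial line fields over $X$. This proves the first assertion.

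For the reformulation in terms of projections I would use the correspondence (recalled before Lemma \ref{L:subprojection}, together with Theorem \ref{T:absorbtion}) between strongly continuous projection valued functions $p:X \to B(H)$ and complemented subfields of the constant field determined by $H$. Given such a $p$ with $\rank p(x) \geq n$ for all $x$, apply the first part to the field $\mc{H}$ with $H_x = p(x)H$. A continuous section of a subfield of the constant field $H$ is simply a norm-continuous map $X \to H$, so the sections $e_1, \dots, e_n$ above are norm-continuous $H$-valued functions with $e_i(x) \in p(x)H$ and $\langle e_i(x), e_j(x)\rangle = \delta_{ij}$. Defining
\[
q(x)\xi = \sum_{i=1}^n \langle \xi, e_i(x)\rangle \, e_i(x) \qquad (\xi \in H),
\]
we get a finite-rank projection that varies norm-continuously in $x$ (a finite sum of norm-continuous rank-one projections), satisfies $q(x) \leq p(x)$ since $e_i(x) \in p(x)H$, and has $\rank q(x) = n$ for every $x$.

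Since Lemma \ref{L:sectionexist} and Proposition \ref{P:isomorphic} already absorb all the dimension-theoretic work, there is essentially no obstacle left; the only point deserving a word of care is the passage between the continuous-field and the $B(H)$-valued pictures, i.e.\ verifying that the orthonormal sections found in the field picture assemble into a norm-continuous projection valued function, which is immediate once one recalls that continuous sections of a subfield of a constant field are exactly the norm-continuous Hilbert-space-valued functions taking values in the fibers. Alternatively, one may bypass the proof of Proposition \ref{P:isomorphic} and instead apply Lemma \ref{L:sectionexist} directly $n$ times: choose a non-vanishing section, normalize it, split off the orthogonal complement subfield (whose fiber dimension is still $\geq n-1$ everywhere by the dimension formula in that proof), and repeat; this recovers the same $e_1, \dots, e_n$.
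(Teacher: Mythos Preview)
Your argument is correct, but the route differs from the paper's. The paper does not open up the recursive construction inside Proposition~\ref{P:isomorphic}; instead it builds a comparison field $\mathcal{H}'=\mathcal{H}_0\oplus K$, where $\mathcal{H}_0$ is the trivial rank-$n$ field and $K=\bigoplus_k \mathcal{H}_k$ is assembled from line fields supported on the open sets $O_k=\{x\mid \dim H_x-n\ge k\}$, so that $\dim H'_x=\dim H_x$ everywhere. Then Proposition~\ref{P:isomorphic} is applied as a black box to transport the evident trivial rank-$n$ subfield $\mathcal{H}_0\subset\mathcal{H}'$ over to $\mathcal{H}$. Your approach instead reaches inside that proposition and reads off the first $n$ orthonormal sections directly (since $U_1=\cdots=U_n=X$), which is more constructive and makes the resulting projection $q$ explicit; your alternative of iterating Lemma~\ref{L:sectionexist} $n$ times is the most hands-on of all and in fact does not even need separability of $\mathcal{H}$. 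The paper's version is shorter on the page because it leverages the classification result wholesale, at the cost of building the auxiliary field $K$.
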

\begin{proof} Suppose $\dim H_x \geq n$. Let $\mathcal{H}_0$ be the
trivial field of rank $n$ and $K=((K_x)_{x\in X}, \Gamma)$
be a continuous field of Hilbert spaces such that $\dim K_x= \dim
H_x - n$: Let $m(x)=\dim H_x -n$. Since $m$ is lower
semi-continuous, for each k $O_k=\{x\mid m(x)\geq k \}$ is open so
that we can construct a continuous field $\mathcal{H}_k$ of Hilbert
spaces such that
\begin{equation*}
\dim H^{k}_x=
\begin{cases}
1 \quad\text{if $x\in O_k$}\\
0 \quad \text{otherwise}
\end{cases}
\end{equation*}
Now take $K$ as $\bigoplus_k \mathcal{H}_k$.\\  If we let
$\mathcal{H}'=\mathcal{H}_0\oplus K$, $\dim H'_x=\dim
H_x$ from the above. Thus the conclusion follows from Proposition
\ref{P:isomorphic}.
\end{proof}

\begin{cor}
If $X$ is a separable metric space such that $\dim(X) \leq 1$ and
$H_x \ne 0$ for every $x \in X$, then there is a vector field $v$
such that $v(x)\ne 0$ for every $x \in X$.
\end{cor}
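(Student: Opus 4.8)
The plan is to read this off directly from Lemma \ref{L:sectionexist}. First I would take $f$ to be the zero vector field $f \equiv 0$, which belongs to $\Gamma$ because $\Gamma$ is a linear subspace of $\prod_{x \in X} H_x$ and is trivially norm-continuous. The hypotheses of Lemma \ref{L:sectionexist} are then met verbatim: $X$ is separable metric with $\dim X \leq 1$, $\mathcal{H}$ is a continuous field of Hilbert spaces over $X$, $H_x \ne 0$ for every $x \in X$, and we may pick any $\epsilon > 0$, say $\epsilon = 1$. Applying the lemma to $f$ yields a continuous vector field $g$ with $\|g(x)\| < 1$ and, crucially, $g(x) \ne 0$ for every $x \in X$. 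Setting $v = g$ completes the proof.

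There is essentially no obstacle to overcome here: all the substance --- the recursive patching of non-vanishing local sections over a locally finite closed refinement of a trivializing cover, together with the dimension-theoretic modifications needed to remove the zero set --- has already been carried out inside Lemma \ref{L:sectionexist}. One could equally well start from any section $f \in \Gamma$ (these exist since $\{\xi(x) : \xi \in \Gamma\}$ is dense in each $H_x$) rather than the zero section; the particular choice is irrelevant.

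Alternatively, the statement is the case $n = 1$ of Corollary \ref{C:subprojection}: the hypothesis $H_x \ne 0$ for all $x$ says exactly $\dim H_x \geq 1$, so $\mathcal{H}$ has a trivial rank-one subfield $\mathcal{L}$, and any unit section of $\mathcal{L}$ is a nowhere-vanishing vector field of $\mathcal{H}$. Either route works; I would present the one-line deduction from Lemma \ref{L:sectionexist} as the primary argument, since it avoids invoking Proposition \ref{P:isomorphic} unnecessarily.
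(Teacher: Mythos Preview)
Your proposal is correct. The paper states this corollary without proof, evidently regarding it as immediate from the preceding material; given its placement right after Corollary~\ref{C:subprojection}, the intended derivation is presumably the $n=1$ case you mention as your alternative. Your primary route via Lemma~\ref{L:sectionexist} with $f\equiv 0$ is equally valid and arguably cleaner, since it bypasses Proposition~\ref{P:isomorphic} entirely.
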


The following facts are useful and well-known. But we include proofs
for the sake of completeness.
\begin{prop}\label{P:constructproj}
Given a closed interval $X$ and a point $a$ in $X$, there is a
strongly continuous projection valued function $p$ on $X$ such that
\begin{equation*}
\rank(p(x))= \begin{cases}
 1 \quad \text{if $x \ne a$,}\\
 0 \quad \text{if $x=a$}
 \end{cases}
\end{equation*}
\end{prop}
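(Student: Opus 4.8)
The plan is to construct $p$ by hand as a path of rank-one projections that ``runs off to infinity'' as $x\to a$, so that it tends to $0$ in the strong operator topology at $a$. First I would reduce to the case $X=[0,1]$ with $a$ an endpoint: an affine homeomorphism moves the problem to $[0,1]$, and if $a$ lies in the interior one builds $p$ separately on the two closed subintervals having $a$ as an endpoint and glues the pieces, which agree at $a$ since both are $0$ there. So assume $X=[0,1]$ and $a=0$ (the non-compact cases, e.g. $X=[0,\infty)$ with $a=0$, are handled the same way, taking $p$ constant near the finite end away from $a$).

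Fix an orthonormal basis $\{e_n\}_{n\ge 1}$ of $H$. For each $n\ge 1$, on $[\tfrac1{n+1},\tfrac1n]$ set
\[
  v(t)=\cos\theta_n(t)\,e_n+\sin\theta_n(t)\,e_{n+1},
\]
where $\theta_n\colon[\tfrac1{n+1},\tfrac1n]\to[0,\tfrac\pi2]$ is continuous with $\theta_n(\tfrac1{n+1})=\tfrac\pi2$ and $\theta_n(\tfrac1n)=0$. The values at the shared endpoints $\tfrac1n$ agree (both equal $e_n$), so these paths patch to a norm-continuous field of unit vectors $v$ on $(0,1]$. Define $p(x)$ to be the rank-one projection with range $\Span\{v(x)\}$ for $x\in(0,1]$, that is $p(x)\xi=\langle\xi,v(x)\rangle\,v(x)$, and set $p(0)=0$.

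It remains to verify the properties. Clearly $\rank(p(x))=1$ for $x\ne0$ and $\rank(p(0))=0$. On $(0,1]$ the map $x\mapsto p(x)$ is norm-continuous, since a rank-one projection depends norm-continuously on a unit generator and $v$ is norm-continuous. For continuity at $0$, each $p(x)$ is self-adjoint, so it suffices to prove strong continuity there: if $x\in[\tfrac1{n+1},\tfrac1n]$ then for any $\xi\in H$,
\[
  \|p(x)\xi\|=|\langle\xi,v(x)\rangle|\le|\langle\xi,e_n\rangle|+|\langle\xi,e_{n+1}\rangle|,
\]
and the right-hand side tends to $0$ as $n\to\infty$ by Bessel's inequality; since $n\to\infty$ as $x\to0^+$, we get $p(x)\xi\to0=p(0)\xi$. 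Hence $p$ is strongly, and therefore $*$-strongly (double-strongly), continuous on all of $X$.

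There is no genuine obstacle here; the only delicate point is continuity at the distinguished point $a$, and the mechanism is precisely that a field of unit vectors can converge weakly — but not in norm — to $0$, which forces the associated rank-one projections to converge strongly to $0$ while each one individually stays of rank one.
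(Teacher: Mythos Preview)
Your proof is correct. The paper's official proof takes a different, less elementary route: it observes that a separable continuous field of Hilbert spaces with the prescribed dimension function exists (a line bundle over $X\setminus\{a\}$ extended by zero) and then invokes the absorption theorem (Theorem~\ref{T:absorbtion}) to realize it as a complemented subfield of a trivial field, hence as a strongly continuous projection-valued function. Immediately afterward, however, the paper adds a Remark giving essentially the same elementary mechanism you exploit: a family of unit vectors converging weakly to $0$ (there, the normalized indicators $\chi_{[0,t]}/\sqrt{t}$ in $L^2[0,1]$) produces rank-one projections converging strongly to $0$. Your piecewise rotation through an orthonormal basis is a different explicit realization of the same idea; the Bessel-inequality bound you give makes the strong convergence at $a$ especially transparent and avoids any appeal to dominated convergence.
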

\begin{proof}
 Since there is a separable continuous field $\mc{H}$
such that
\begin{equation*} \dim(H_x)= \begin{cases}
 1 \quad \text{if $x \ne a$,}\\
 0 \quad \text{if $x=a$}
 \end{cases},
\end{equation*} it follows from Theorem \ref{T:absorbtion}.
\end{proof}

\begin{rem}
In fact, we can prove the above proposition using elementary
arguments. Without loss of generality, we can assume $X=[0,1]$,
$a=0$. Since any separable Hilbert space is isomorphic to
$L^{2}[0,1]$, it is enough to construct such a function in
$L^{2}[0,1]$. We view a characteristic function corresponding to
$[0,t]$ as a vector $v_t$. As $t$ goes to 0, the function defined by
$v_t$ goes to 0 weakly in $L^{2}[0,1]$ by the Lebesgue Dominated
Convergence Theorem. Now it is easy to check that the map $t
\mapsto$ rank one projection $\frac{1}{\|v_t\|^2}\theta_{v_t,\,v_t}$
is a strongly continuous projection valued function $p$ such that
$p(0)=0$.
\end{rem}
\begin{cor}\label{C:constructproj}
Given a closed interval $X$ and a point $a$ in $X$, there is a
strongly continuous projection valued function $p$ such that
\begin{equation*}
\rank(p(x))= \begin{cases}
 \infty \quad \text{if $x \ne a$,}\\
 0 \quad \text{if $x=a$}
 \end{cases}
\end{equation*}
\end{cor}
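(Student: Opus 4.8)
The plan is to mimic the proof of Proposition \ref{P:constructproj}: first produce a separable continuous field of Hilbert spaces over $X$ with the prescribed fiber dimensions, and then invoke Theorem \ref{T:absorbtion} to realize it by a strongly continuous projection valued function.

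First I would take $\mathcal{H}$ to be the constant field over $X$ with fiber a fixed separable infinite-dimensional Hilbert space $H$, and let $A=\{a\}$, a closed subset of $X$. As explained in the paragraph preceding Lemma \ref{L:sectionexist}, the ``extension by zero across $A$'' construction yields a continuous subfield $\mathcal{H}^{0}$ with
\[
H^{0}_{x}=\begin{cases} H, & x\ne a,\\ 0, & x=a, \end{cases}
\]
and, since $\mathcal{H}$ is separable and $X$ is a separable metric space, $\mathcal{H}^{0}$ is again separable. In particular $\dim(H^{0}_{x})=\infty$ for $x\ne a$ and $\dim(H^{0}_{a})=0$.

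Next, since $X$ is a closed interval it is compact, hence paracompact, so Theorem \ref{T:absorbtion} applies to $\mathcal{H}^{0}$: there is a strongly continuous projection valued function $p\colon X\to B(H)$ whose associated complemented subfield is isomorphic to $\mathcal{H}^{0}$. Then $\rank(p(x))=\dim(H^{0}_{x})$ for every $x$, which is exactly the required rank behaviour, and the proof is complete.

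I do not expect a serious obstacle; the only point needing care is the separability of $\mathcal{H}^{0}$, and that was already verified in the appendix. Alternatively, one can bypass continuous-field language: take the rank-one projection valued function $p_{0}$ of Proposition \ref{P:constructproj} on a copy of $H$ and set $p=\bigoplus_{n=1}^{\infty}p_{0}$ acting on $\bigoplus_{n=1}^{\infty}H$. Strong continuity of this infinite direct sum follows from the uniform bound $\|p_{0}(x)\xi_{n}-p_{0}(y)\xi_{n}\|^{2}\le 4\|\xi_{n}\|^{2}$ (whose right-hand side is summable in $n$) together with dominated convergence for series, while the fiber ranks are $\infty$ off $a$ and $0$ at $a$ by construction.
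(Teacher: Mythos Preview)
Your proposal is correct. In fact, the alternative you sketch at the end---taking the rank-one function $p_{0}$ from Proposition \ref{P:constructproj} and forming $p=\bigoplus_{n}p_{0}$ on $\bigoplus_{n}H$---is precisely the paper's proof of this corollary. Your main argument (extension by zero of the constant field followed by Theorem \ref{T:absorbtion}) is a valid alternative that simply transports the method of Proposition \ref{P:constructproj} to the infinite-rank setting; it buys a uniform treatment of both results via continuous-field machinery, while the paper's direct-sum argument is more elementary and makes the strong continuity explicit without invoking Theorem \ref{T:absorbtion}.
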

\begin{proof}
 Let $p_i$ be the function on $X$ such that
\begin{equation*} \rank(p_i(x))= \begin{cases}
 1 \quad \text{if $x \ne a$,}\\
 0 \quad \text{if $x=a$}
 \end{cases}
\end{equation*} for each $i$. Then the map $x \mapsto
 \bigoplus_i p_i(x) \in \bigoplus_i L^{2}[0,1]$ is the required map.
\end{proof}
   
   \end{document}